\documentclass[a4paper,11pt]{article}
\usepackage{graphicx} 
\usepackage{amsmath}
\usepackage{amssymb}
\usepackage{amsfonts}
\usepackage{amsthm}
\usepackage{parskip}
\usepackage{comment}
\usepackage{xcolor}
\usepackage{tikz}
\usepackage{yhmath}
\usepackage{enumitem}
\usepackage[top=22mm, bottom=23mm, left=22mm, right=22mm]{geometry}
\allowdisplaybreaks

\newtheorem{thm}{Theorem}[section]
\newtheorem{prop}[thm]{Proposition}
\newtheorem{lem}[thm]{Lemma}
\newtheorem{cor}[thm]{Corollary}
\newtheorem{conj}[thm]{Conjecture}

\theoremstyle{definition}

\newtheorem{df}[thm]{\bf Definition}

\newcommand{\PP}[1]{\mathbb{P}\left(#1\right)}
\newcommand{\floor}[1]{\left\lfloor#1\right\rfloor}
\newcommand{\ceil}[1]{\left\lceil#1\right\rceil}

\newcommand{\EE}[1]{\mathbb{E}\left(#1\right)}

\title{Multivariate and quantitative Erd\H{o}s--Kac laws for Beatty sequences}
\author{Fredy Yip\thanks{Trinity College, University of Cambridge, United Kingdom. Email: \textbf{fy276@cam.ac.uk}.}}
\date{}
\begin{document}

\maketitle

\begin{abstract}
    The classical Erd\H{o}s--Kac theorem states that for $n$ chosen uniformly at random from $1, \dots, N$, the random variable $(\omega(n) - \log\log N)/\sqrt{\log\log N}$ converges in distribution to the standard Gaussian as $N$ tends to infinity. Banks and Shparlinski showed that this Gaussian convergence holds for any Beatty sequence $\floor{\alpha n + \beta}$ in place of $n$. Continuing in this spirit, Crn{\v{c}}evi{\'c}, Hern{\'a}ndez, Rizk, Sereesuchart and Tao considered the joint distribution of $\omega(n)$ and $\omega(\floor{\alpha n})$, which they showed to be asymptotically independent for irrational values of $\alpha$. 

    Generalising both results, we show that for any positive integer $k$, real numbers $\alpha_1, \dots, \alpha_k > 0$ and $\beta_1, \dots, \beta_k$, where $\alpha_i/\alpha_j$ is irrational for $i\neq j$, the joint distribution of $(\omega(\floor{\alpha_in + \beta_i}) - \log\log N)/\sqrt{\log\log N}$ converges to the $k$-dimensional standard Gaussian. 

    We next discuss quantitative bounds on the rate of convergence which do not depend on the values taken by the relevant parameters. Banks and Shparlinski remarked that such quantitative bounds may be given for a single Beatty sequence $\floor{\alpha n + \beta}$ under Diophantine type assumptions on $\alpha$. We show that such assumptions are in fact unnecessary. Specifically, for any real numbers $\alpha > 0, \beta$, we show that the Kolmogorov distance between the random variable $(\omega(\floor{\alpha n + \beta}) - \log\log N)/\sqrt{\log\log N}$ and the standard Gaussian is bounded above by $O(\log\log\log N/\sqrt{\log\log N})$ as $N$ tends to infinity. 

    On the other hand, we show that universal quantitative bounds of this kind do not exist for higher-degree generalised polynomials or for the joint convergence for multiple Beatty sequences. 
\end{abstract}

\section{Introduction}

In their seminal work in probabilistic number theory, Erd\H{o}s and Kac~\cite{E-K} established the distribution of the number $\omega(n)$ of distinct prime divisors of a positive integer $n$ chosen uniformly from $[N] = \{1, \dots, N\}$. Specifically, they showed that $\omega(n)$ converges in distribution to a normal random variable centred at $\log\log N$ with variance $\log\log N$. 

\begin{thm}[Erd\H{o}s and Kac~\cite{E-K}] \label{E-K thm}
    For $n\sim U[N]$ chosen uniformly at random from $[N]$, the random variable 
    \begin{equation*}
        \frac{\omega(n) - \log\log N}{\sqrt{\log\log N}}
    \end{equation*}
    converges in distribution to the standard Gaussian $\mathcal{N}(0, 1)$ as $N\rightarrow\infty$. 
\end{thm}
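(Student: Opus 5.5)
We prove Theorem~\ref{E-K thm} by the method of moments, following Billingsley and Granville--Soundararajan: truncate $\omega$ to small primes, compare the truncated sum with a sum of independent Bernoulli variables, and use the fact that the Gaussian is determined by its moments.

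\emph{Truncation.} Fix $y = N^{1/\log\log\log N}$, or more generally any $y = N^{\varepsilon_N}$ with $\varepsilon_N \to 0$ slowly. Set
\begin{equation*}
    \omega_y(n) = \sum_{p\le y} \mathbf{1}_{p\mid n}.
\end{equation*}
An integer $n\le N$ has at most $1/\varepsilon_N$ prime factors exceeding $y$, so $|\omega(n)-\omega_y(n)| \le \log\log\log N = o(\sqrt{\log\log N})$ uniformly in $n$. By Mertens' theorem,
\begin{equation*}
    \mu_y := \sum_{p\le y} \frac1p = \log\log N + O(\log\log\log\log N),
\end{equation*}
and $\sigma_y^2 := \sum_{p\le y} \frac1p\left(1-\frac1p\right) = \log\log N + O(\log\log\log\log N)$. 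By Slutsky's lemma it therefore suffices to show that $(\omega_y(n)-\mu_y)/\sigma_y$ converges to $\mathcal{N}(0,1)$.

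\emph{Model and moment comparison.} Let $X_p$, for $p\le y$, be independent Bernoulli variables with $\PP{X_p=1}=1/p$. Since $\sum_{p\le y} \mathrm{Var}(X_p) = \sigma_y^2 \to \infty$ and the summands are bounded, the Lindeberg CLT shows that $Z = \left(\sum_{p \le y}(X_p - 1/p)\right)/\sigma_y$ tends to $\mathcal{N}(0,1)$. Moreover, all moments of $Z$ converge to the Gaussian moments: expand and note that only products in which every prime appears at least twice contribute, and these are controlled by $\sigma_y^2$.

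For fixed $k$, expand
\begin{equation*}
    \EE{(\omega_y(n)-\mu_y)^k} = \sum_{p_1,\dots,p_k\le y} \EE{\prod_{i=1}^{k} \left(\mathbf{1}_{p_i\mid n}-\frac{1}{p_i}\right)}.
\end{equation*}
Multiply out each product. Every resulting term has the form $c\,\PP{d\mid n}$, where $d$ is the squarefree product of a subset of the $p_i$, so $d\le y^k$. We have $\PP{d\mid n} = 1/d + O(1/N)$, and $1/d$ is exactly the corresponding probability in the model. Each of the $k$ factors expands into two terms, so a product contributes at most $2^k$ error terms, each of size $O(1/N)$. Summing over all $\pi(y)^k$ tuples, the discrepancy between the true $k$-th moment and the model's is
\begin{equation*}
    O\left(2^k\pi(y)^k/N\right) = O\left(2^k y^k/N\right) = o(1),
\end{equation*}
because $y^k = N^{k\varepsilon_N} = o(N)$. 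Dividing by $\sigma_y^k$, the moments of $(\omega_y-\mu_y)/\sigma_y$ converge to those of $\mathcal{N}(0,1)$. The Gaussian is determined by its moments, so convergence in distribution follows.

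\emph{Main obstacle.} The delicate point is choosing $y$. It must be small enough that $y^k = o(N)$ for every fixed $k$, so that the $O(1/N)$ errors stay negligible after summing over all tuples. It must also be large enough that the discarded primes in $(y,N]$ change $\omega$ by only $o(\sqrt{\log\log N})$, and that $\mu_y$ and $\sigma_y^2$ differ from $\log\log N$ by $o(\sqrt{\log\log N})$. The choice $\varepsilon_N = 1/\log\log\log N$ meets both requirements: the truncation error is $\log\log\log N = o(\sqrt{\log\log N})$, and $\log\log y = \log\log N - \log\log\log\log N$.
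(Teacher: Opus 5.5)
Your proof is correct. The paper does not prove this statement; it is quoted as the classical starting point. It is, however, the case $k=1$, $\alpha_1=1$, $\beta_1=0$ of Theorem~\ref{main} (and, with a rate, of Theorem~\ref{Beatty quant}), and your argument is the method-of-moments core of the paper's proof of Proposition~\ref{reduced main}.

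The two arguments organise the same computation differently. The paper expands the moment over prime tuples and treats each type separately:
\begin{itemize}
\item type A (primes in pairs) contributes $\prod'(p^{-1}-p^{-2})$;
\item type C (some prime occurs once) is negligible;
\item type D (every prime repeated, some more than twice) is controlled by counting;
\item type B, which encodes the irrationality of $\alpha_i/\alpha_j$, is empty when $k=1$.
\end{itemize}
You instead compare the whole $k$-th moment with the independent Bernoulli model in one step. The same case analysis then takes place inside your computation of the model's moments.

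All of the paper's extra machinery exists only because, for Beatty sequences, the joint divisibility probabilities must be extracted by Fourier analysis, uniformly in the Diophantine behaviour of the $\alpha_i$. This includes the smoothed indicators $\chi_{p,\pm}$ with good and bad intervals, the Fourier expansion, Lemma~\ref{key lin alg}, and discarding the primes in $\mathcal{B}_N$. For $n$ itself you get $\mathbb{P}(d\mid n)=1/d+O(1/N)$ for free, which is why your error term $O(2^k y^k/N)$ is immediate.

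The closest match in the paper is Section~\ref{quant conv}. It compares with the same Kubilius model, but via binomial moments, characteristic functions and Esseen's smoothing lemma. That route yields the rate $O(\log\log\log N/\sqrt{\log\log N})$. The price is the smaller truncation $R=N^{1/\log\log N}$, needed so that errors can be summed over $\ell$ up to $\asymp\log\log N$, and hence only probabilistic control of the discarded large primes. Your choice $y=N^{1/\log\log\log N}$ gives a uniform truncation error and a shorter proof of the qualitative statement.
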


Many alternative proofs of the Erd\H{o}s--Kac theorem (Theorem~\ref{E-K thm} above) have been discovered. Most notably, Halberstam~\cite{method-of-moments} proved the Erd\H{o}s--Kac theorem by showing that all moments of the random variable $(\omega(n) - \log\log N)/\sqrt{\log\log N}$ converged to the corresponding moment of the standard Gaussian. Granville and Soundararajan~\cite{method-of-moments-simplified} gave a simplified account of this proof of the Erd\H{o}s--Kac theorem via the ``method of moments". 

The Erd\H{o}s--Kac theorem admits numerous generalisations over a diverse range of settings, including the distribution of $\omega(f(n))$ for an irreducible integer polynomial $f$~\cite{polynomials}, and $\omega(\floor{\alpha n + \beta})$ for a Beatty sequence $\floor{\alpha n + \beta}$~\cite{B-S}. 

\begin{thm}[Halberstam~\cite{polynomials}, Theorem 3] \label{integer polynomial}
    Let $f$ be an irreducible integer polynomial with positive leading coefficient. For $n\sim U[N]$, the random variable 
    \begin{equation*}
        \frac{\omega(f(n)) - \log\log N}{\sqrt{\log\log N}}
    \end{equation*}
    converges in distribution to $\mathcal{N}(0, 1)$ as $N\rightarrow\infty$. 
\end{thm}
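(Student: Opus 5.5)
We prove Theorem~\ref{integer polynomial} by the method of moments, comparing $\omega(f(n))$ restricted to small primes with a sum of independent Bernoulli variables, as in the Granville--Soundararajan treatment of Theorem~\ref{E-K thm}.

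\textbf{Setup and truncation.} Let $d=\deg f$ and let $\rho(p)$ be the number of roots of $f$ modulo $p$. Set $y = N^{1/\log\log\log N}$ and $\omega_y(m)=\#\{p\le y : p\mid m\}$. Since $|f(n)|\ll N^d$ for $n\le N$, the number $f(n)$ has at most $O(d\log\log\log N)$ prime factors exceeding $y$. Hence $\omega(f(n))-\omega_y(f(n)) = o(\sqrt{\log\log N})$ uniformly in $n$, and it suffices to prove the theorem for $\omega_y(f(n))$.

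\textbf{Local densities.} For squarefree $q$ composed of primes $\le y$, the Chinese remainder theorem gives
\[
\#\{n\le N: q\mid f(n)\} = \frac{N\rho(q)}{q} + O(\rho(q)), \qquad \rho(q)=\prod_{p\mid q}\rho(p)\le d^{\omega(q)}.
\]
Since $f$ is irreducible, the prime ideal theorem for the splitting field (equivalently, Nagell's/Landau's result) gives
\[
\sum_{p\le x}\frac{\rho(p)}{p}=\log\log x + O_f(1).
\]
Here irreducibility is essential: a product of $r$ irreducible factors would give $r\log\log x$. Consequently
\[
\mu:=\sum_{p\le y}\frac{\rho(p)}{p}=\log\log N + O(\log\log\log\log N),
\]
and similarly $\sigma^2:=\sum_{p\le y}\frac{\rho(p)}{p}\left(1-\frac{\rho(p)}{p}\right)=\log\log N+O(\log\log\log\log N)$.

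\textbf{Moment comparison.} Let $X_p$ be independent Bernoulli variables with $\mathbb{P}(X_p=1)=\rho(p)/p$. Fix $k$ and expand
\[
\EE{\bigl(\omega_y(f(n))-\mu\bigr)^k} = \sum_{p_1,\dots,p_k\le y}\EE{\prod_{i}\bigl(\mathbf{1}_{p_i\mid f(n)}-\rho(p_i)/p_i\bigr)}.
\]
Expanding each product gives terms of the form $\mathbb{P}(q\mid f(n))$ with $q\mid p_1\cdots p_k$. By the local density formula, each such probability equals the corresponding independent-model probability up to an error $O(d^k/N)$. Summing over at most $\pi(y)^k2^k\le y^k 2^k = N^{o(1)}$ terms, the total error is $o(1)$. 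Therefore the $k$-th moment of $\omega_y(f(n))-\mu$ equals $\EE{\bigl(\sum_{p\le y}(X_p-\rho(p)/p)\bigr)^k}+o(1)$.

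\textbf{Conclusion and main obstacle.} The sum of independent bounded variables $X_p$ has variance $\sigma^2\to\infty$, so by the Lindeberg CLT, together with uniform boundedness of normalized moments, its normalized moments converge to the Gaussian moments. The Gaussian is determined by its moments, so the method of moments gives convergence of $(\omega_y(f(n))-\mu)/\sigma$ to $\mathcal{N}(0,1)$. Replacing $\mu,\sigma$ by $\log\log N,\sqrt{\log\log N}$ changes things by $o(1)$, which completes the proof.

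The main obstacle is the prime-sum asymptotic $\sum_{p\le x}\rho(p)/p=\log\log x+O(1)$. This requires algebraic number theory, namely Landau's prime ideal theorem (or Chebotarev) applied to the number field $\mathbb{Q}[x]/(f)$, in which $\rho(p)$ counts degree-one primes above $p$ for all but finitely many $p$. I would cite this result rather than reprove it. Everything else is routine bookkeeping.
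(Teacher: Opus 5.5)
Your proof is correct. The paper does not prove this statement; it only quotes it from Halberstam, to handle rational $\alpha$ in Theorem~\ref{Beatty}. So the useful comparison is with the moment machinery the paper builds for Beatty sequences.

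Your argument is the standard one. You truncate at $y=N^{1/\log\log\log N}$, in the spirit of Granville--Soundararajan, and then compare with independent Bernoulli variables. This is the same Kubilius-type model as the $\omega'''_N$ the paper uses to prove Theorem~\ref{Beatty quant}.

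Your proof is short because the Chinese remainder theorem gives $\mathbb{P}(q\mid f(n))=\rho(q)/q+O(\rho(q)/N)$ exactly and uniformly in squarefree $q$. Every expanded term therefore matches the independent model, with total error $O((2dy)^k/N)=o(1)$. The paper cannot do this for $\lfloor\alpha_i n+\beta_i\rfloor$, whose joint divisibility probabilities are not given by the Chinese remainder theorem. It has to extract them by Fourier analysis, which brings in the smoothing, the rational approximations $\gamma_i$, and the Diophantine case analysis of Sections~\ref{lin alg}--\ref{type A and C}. In the other direction, your one deep input is the prime ideal theorem estimate $\sum_{p\le x}\rho(p)/p=\log\log x+O_f(1)$. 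That is where irreducibility enters, and it has no counterpart in the linear Beatty setting, where the mean is simply $\sum_p 1/p$.

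Three details deserve a line each:
\begin{itemize}
\item The bound $\rho(p)\le d$ for all $p$ uses that $f$ is primitive. This is automatic for irreducibility in $\mathbb{Z}[x]$; if you only assume irreducibility over $\mathbb{Q}$, the finitely many $p$ dividing the content change only constants.
\item You should justify the uniform bound on the normalized moments of $S=\sum_{p\le y}(X_p-\rho(p)/p)$. Expand $\mathbb{E}(S^{2k})$: only tuples in which each prime occurs at least twice survive. Since $|X_p-\rho(p)/p|\le 1$, these contribute $\ll_k(1+\sigma^2)^k$. Alternatively, compute the moments of $S$ directly and skip the Lindeberg step.
\item You should discard the $O_f(1)$ values of $n$ with $f(n)\le 0$.
\end{itemize}
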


\begin{thm}[Banks and Shparlinski~\cite{B-S}, Theorem 3] \label{Beatty}
    Let $\alpha > 0, \beta\in\mathbb{R}$. For $n\sim U[N]$, the random variable 
    \begin{equation*}
        \frac{\omega(\floor{\alpha n + \beta}) - \log\log N}{\sqrt{\log\log N}}
    \end{equation*}
    converges in distribution to $\mathcal{N}(0, 1)$ as $N\rightarrow\infty$. 
\end{thm}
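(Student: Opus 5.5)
We prove Theorem~\ref{Beatty} by the method of moments, following the Granville--Soundararajan approach. Write $m = m(n) = \floor{\alpha n + \beta}$ and truncate the prime divisors: put $z = N^{1/\log\log\log N}$ (or any $z = N^{o(1)}$ with $\log\log z = \log\log N + o(\sqrt{\log\log N})$). Define
\begin{equation*}
    \omega_z(m) = \sum_{p\le z,\ p\mid m} 1 .
\end{equation*}
Since $m \ll N$, $m$ has only $O(\log\log\log N)$ prime factors exceeding $z$. So $\omega(m) - \omega_z(m) = o(\sqrt{\log\log N})$ deterministically, and it suffices to prove the Gaussian limit for $\omega_z$ with centring and variance $\sum_{p\le z} 1/p = \log\log N + o(\sqrt{\log\log N})$.

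The key input is equidistribution of Beatty sequences in residue classes. For squarefree $d \le z^{k}$ (with $k$ the moment order, fixed), we need
\begin{equation*}
    \#\{n\le N : d \mid \floor{\alpha n+\beta}\} = \frac{N}{d} + E_d .
\end{equation*}
The condition $d\mid \floor{\alpha n+\beta}$ holds exactly when $\{(\alpha n+\beta)/d\} \in [0,1/d)$.

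\begin{itemize}
    \item If $\alpha$ is irrational, then so is $\alpha/d$, and Weyl's equidistribution gives $E_d = o(N)$ for each fixed $d$. However, we need the total error $\sum_{d\le z^k} |E_d|/N$ to be small. Here we use the Erd\H{o}s--Tur\'an inequality together with a Diophantine approximation $|\alpha - a/q|\le 1/q^2$ chosen depending on $N$. This bounds $E_d$ on average over $d$ by something like $N(1/q + q/N)\,d^{\epsilon}$ times a factor in $H$, and because $z^k = N^{o(1)}$ the total error is $o(N)$. This averaging over $d$ with uniform error is the main obstacle. The cleanest route is perhaps a fixed-$d$ argument: moments only involve $d$ as products of at most $k$ primes up to $z$, but $z\to\infty$, so uniformity in $d$ is genuinely needed. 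We would handle it by letting $z$ grow slowly enough, e.g.\ $z = z(N)\to\infty$ chosen so that $\max_{d\le z^k}|E_d|/N \to 0$ fast enough that $z^k \max_d |E_d| = o(N)$. Such a $z$ exists by a diagonal argument, since for each fixed $d$ the error is $o(N)$.
    \item If $\alpha = a/b$ is rational, then $m(n)$ runs over an arithmetic-progression-like set. Small primes $p \mid a$ or $p\mid b$ behave exceptionally, but there are only finitely many of them, contributing $O(1)$. For other $p$ the density of $p\mid m$ is exactly $1/p$ up to $O(1)$ error per period, giving $E_d = O(d)$, which is fine.
\end{itemize}

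Slow growth of $z$ is harmless only if $\log\log z = (1+o(1))\log\log N$ fails. So instead we keep $z = N^{1/\log\log\log N}$ for the moment computation, and use the diagonal choice only to control the primes in $(z_0, z]$ separately. Alternatively, and preferably, we use the quantitative discrepancy bound for $\{n\alpha/d\}$ via continued fractions: the sum over $d$ of the discrepancies of $(n\alpha/d)$ is controlled by Koksma--Erd\H{o}s--Tur\'an with exponential sums $\sum_{h\le H}\frac1h\min(N,\|h\alpha/d\|^{-1})$, averaged over $d$. I expect this averaging to succeed because $h/d$ ranges over rationals of bounded height $z^{k}H$, and $\alpha$ cannot be close to many of them simultaneously.

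With equidistribution in hand, we model $\omega_z(m)$ by $\sum_{p\le z} X_p$ with independent Bernoulli variables $X_p$ of mean $1/p$. Expanding $\EE{(\omega_z(m)-\mu)^k}$ as a sum over $k$-tuples of primes, each term equals the model term up to $E_{[p_1,\dots,p_k]}/N$. So the $k$-th moments agree with the model's up to $o(1)$ (after normalization), and the model's moments converge to Gaussian moments by the classical CLT-with-moments computation, exactly as in Granville--Soundararajan. Since the Gaussian is determined by its moments, Theorem~\ref{Beatty} follows.
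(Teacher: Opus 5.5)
Your truncation to $\omega_z$ and the comparison of moments with the independent Bernoulli model are fine. The gap is the equidistribution input everything rests on: as you state it, it is false, and none of your three routes can establish it.

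\textbf{Why the uniform estimate fails.} A fixed irrational $\alpha$ can, at scale $N$, be extremely close to a rational $a/q$ whose denominator $q=q(N)\to\infty$ arbitrarily slowly. For Liouville-type $\alpha$ there are infinitely many $N$ with $0<\alpha-a/q<1/(qN)$ and, say, $q\le\log\log\log N$. Take $\beta=0$. Then $\floor{\alpha n}=\floor{an/q}$ for all $n\le N$. For a prime $p\mid a$ we have $p\mid\floor{an/q}$ iff $p\mid(an\bmod q)$, so
$E_p/N=\ceil{q/p}/q-1/p+O(q/N)\ge 1/(pq)+O(q/N)$.
Since $p\le a\asymp_\alpha q$ lies far below $z$, this contradicts $z^k\max_d|E_d|=o(N)$ for every admissible $z$.

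Even your term-by-term accounting over tuples fails. For $d=pp_2\cdots p_k$ with distinct $p_i\nmid a$, one gets $|E_d|/N\ge 1/(pq\,p_2\cdots p_k)-O(qd/N)$. Summing over $p_2,\dots,p_k\le z$ gives $\gg(\log\log N)^{k-1}/(pq)$, which is not $o\big((\log\log N)^{k/2}\big)$ once $k\ge 3$.

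\textbf{Why your three routes do not close it.}
\begin{itemize}
\item The diagonal argument fails for the reason you note yourself: a slowly growing $z$ loses almost all of the $\log\log N$ mass.
\item ``Treating $(z_0,z]$ separately'' is not a reduction; it is the whole problem.
\item The Koksma--Erd\H{o}s--Tur\'an route fails even for $d$ coprime to $a$. The points $(\alpha n+\beta)/d \bmod 1$ lie, up to tiny error, on a grid of spacing $1/(qd)$, so their discrepancy is $\gg 1/(qd)$. Any discrepancy bound therefore gives at best $|E_d|/N\ll 1/(qd)$, i.e.\ order $(\log\log N)^k/q$ over $k$-tuples. Your own bound $N(1/q+q/N)$ displays this $1/q$ term.
\end{itemize}
Your heuristic that $\alpha$ is close to few rationals is correct, but closeness to a single one already destroys uniformity.

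\textbf{The missing idea: an $N$-dependent set of exceptional primes.} At scale $N$, fix a rational approximation $a/q$ of $\alpha$ and discard the primes dividing $a$. The paper uses the closest $\gamma\in\mathcal{Q}(N^{1/8})$, or the $\gamma_i$ of Lemma~\ref{key lin alg}, and the discarded set is essentially $\mathcal{B}_N$.
\begin{itemize}
\item Since $a\le N^{O(1)}$, we have $\sum_{p\mid a}1/p\ll\log\log\log N$. With the trivial bound $\mathbb{P}_{n\sim U[N]}(p\mid\floor{\alpha n+\beta})\ll_\alpha 1/p+1/N$, these primes are negligible after dividing by $\sqrt{\log\log N}$.
\item For $d$ coprime to $a$, one needs a power-saving bound uniform in $d$ (Lemma~\ref{quant esti lem}). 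The paper obtains it from approximants $\chi_\pm$ whose Fourier transforms vanish on $\mathbb{Z}\setminus\{0\}$. This is exactly what removes the frequencies in $d\mathbb{Z}$ that ruin Erd\H{o}s--Tur\'an. It also splits $[N]$ into short blocks to handle $\beta$.
\end{itemize}

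\textbf{An elementary alternative within your framework.} Take Dirichlet's $a/q$ with $q\le N^{1/2}$ and $|\delta|=|\alpha-a/q|\le 1/(qN^{1/2})$. Write $n=qs+r$, so that $\floor{\alpha n+\beta}=as+\floor{ar/q+\beta+\delta n}$. For fixed $r$, the second term is monotone in $s$ and takes $O(N^{1/2}/q+1)$ values. On each stretch where it equals a constant $c$, the congruence $as+c\equiv 0\pmod d$ is equidistributed when $\gcd(a,d)=1$. This gives $E_d\ll N^{1/2}$ uniformly in $d$ coprime to $a$. With that in place, and only primes not dividing $a$ in the tuples, your moment computation goes through with $z=N^{1/\log\log\log N}$.
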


Banks and Shparlinski in fact assumed in Theorem~\ref{Beatty} that $\alpha$ is irrational, but the corresponding statement for rational choices of $\alpha$ follows easily from Theorem~\ref{integer polynomial}. 

Crn{\v{c}}evi{\'c}, Hern{\'a}ndez, Rizk, Sereesuchart and Tao~\cite{pairwise-independence} extended Theorem~\ref{Beatty} by studying the joint distribution of $\omega(n)$ and $\omega(\floor{\alpha n})$, for irrational values of $\alpha$. In particular, they show that $\omega(n)$ and $\omega(\floor{\alpha n})$ are asymptotically independent. 

\begin{thm}[Crn{\v{c}}evi{\'c}, Hern{\'a}ndez, Rizk, Sereesuchart and Tao~\cite{pairwise-independence}, Theorem A] \label{pairwise}
    Let $\alpha > 0$ be irrational. For $n\sim U[N]$, the random variable 
    \begin{equation*}
        \left(\frac{\omega(n) - \log\log N}{\sqrt{\log\log N}},  \frac{\omega(\floor{\alpha n}) - \log\log N}{\sqrt{\log\log N}}\right)
    \end{equation*}
    converges in distribution to the standard two-dimensional Gaussian $\mathcal{N}(0, I_2)$ as $N\rightarrow\infty$. 
\end{thm}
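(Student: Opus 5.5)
We prove Theorem~\ref{pairwise} by the method of moments for the joint distribution, following the Granville--Soundararajan approach. Fix a slowly growing truncation $y = N^{\varepsilon_N}$ with $\varepsilon_N \to 0$ slowly, say $\varepsilon_N = 1/\log\log\log N$. Replace $\omega$ by the truncated count $\omega_y(m) = \sum_{p\le y} 1_{p\mid m}$. Since $m\le \alpha N+1$ has $O(1/\varepsilon_N)$ prime factors exceeding $y$, and $\sum_{y<p\le N}1/p = O(\log(1/\varepsilon_N))$ is $o(\sqrt{\log\log N})$, this changes both coordinates by $o(\sqrt{\log\log N})$. It therefore suffices to treat
\[
X = \sum_{p\le y}\bigl(1_{p\mid n}-\tfrac1p\bigr), \qquad Y = \sum_{q\le y}\bigl(1_{q\mid \floor{\alpha n}}-\tfrac1q\bigr),
\]
because $\sum_{p\le y}1/p = \log\log N + O(\log(1/\varepsilon_N))$. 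By the Cram\'er--Wold device, or directly from mixed moments, it is enough to show that for all fixed $a,b\ge 0$ we have $\mathbb{E}(X^aY^b)/(\log\log N)^{(a+b)/2} \to \mu_a\mu_b$, where $\mu_k$ are the Gaussian moments.

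We expand $X^aY^b$ as a sum over tuples $p_1,\dots,p_a,q_1,\dots,q_b\le y$. The basic input is the joint equidistribution statement: for squarefree $d,e\le y^{a+b}$,
\[
\frac1N\#\{n\le N: d\mid n,\ e\mid \floor{\alpha n}\} = \frac{1}{de} + \text{error}.
\]
To see this, note that $e\mid\floor{\alpha n}$ holds exactly when $\{\alpha n/e\}\in[0,1/e)$. Writing $n=dm$, the question becomes whether the sequence $m\mapsto \alpha d m/e$ is equidistributed modulo $1$ over $m\le N/d$, with limiting proportion $1/e$. Since $\alpha$ is irrational, $\alpha d/e$ is irrational and Weyl's criterion applies. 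Given the main term $1/(de)$, which factorises, the moment computation separates exactly as in the independent model. We would then obtain $\mathbb{E}(X^aY^b)\approx \mathbb{E}(Z_1^a)\,\mathbb{E}(Z_2^b)$, where $Z_1,Z_2$ are sums of independent centred Bernoulli$(1/p)$ variables over $p\le y$. The standard Granville--Soundararajan computation then gives $\mu_a(\log\log y)^{a/2}(1+o(1))$ for each factor.

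The main obstacle is uniformity of the error term. There are $y^{a+b}$ tuples, and the error must be $o(1/(de))$ summed over them, so we need
\[
\sum_{d,e\le y^{a+b}} |\text{error}(d,e)| = o(1).
\]
For a single irrational $\alpha$ the discrepancy of $\{m\alpha d/e\}$ over $m\le N/d$ is not uniformly small without Diophantine information, but we only need it to be $o(1)$ for each of finitely many pairs once $y$ is fixed. My plan is to exploit this. By a diagonal argument, choose $y=y(N)\to\infty$ so slowly that for every $d,e\le y^{a+b}$, for all $a+b\le K(N)$, the Weyl discrepancy of $\{m\alpha d/e\}_{m\le N/d}$ is at most $y^{-3(a+b)}$. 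This is possible because each of the finitely many sequences is equidistributed.

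Such a slowly growing $y$ costs accuracy only in the truncation step. There we need $\sum_{y<p\le N}1/p = \log(\log N/\log y) = o(\sqrt{\log\log N})$, which holds provided $\log\log y \ge \log\log N - o(\sqrt{\log\log N})$. This constraint may conflict with $y$ growing arbitrarily slowly, and it is the step I expect to require the most care. If it fails, I would instead bound the contribution of primes in $(y, N^{\varepsilon}]$ separately by a second-moment (Tur\'an--Kubilius type) argument. That argument needs only pairs $d=p$, $e=q$ together with a sieve upper bound, namely
\[
\#\{n\le N: p\mid n,\ q\mid\floor{\alpha n}\} \ll \frac{N}{pq} + \text{error},
\]
which is easier to make uniform.
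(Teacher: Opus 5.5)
Your reduction to mixed moments and the truncation at $y=N^{1/\log\log\log N}$ are fine. The step everything rests on fails, though: you need a joint equidistribution estimate with $\sum_{d,e\le y^{a+b}}|\mathrm{error}(d,e)|=o(1)$ for a $y$ that is still large enough to truncate, and for general irrational $\alpha$ this is false. Neither the diagonal choice of $y$ nor your fallback repairs it.

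Take $\alpha$ Liouville-like, e.g.\ $\alpha=\sum_k t_k^{-1}$ with $t_{k+1}$ huge compared with $t_1\cdots t_k$, as in the proof of Theorem~\ref{gen poly quant}. Then for infinitely many $N$ there is $c/s$ in lowest terms with $0<\alpha-c/s<(sN)^{-1}$ and $s\le\log\log\log N$.
\begin{itemize}
\item Then $\floor{\alpha n}=\floor{cn/s}$ for all $n\le N$.
\item Let $p>s$ be a prime with $p\nmid cs$, and write $n=pm$. Then $p\mid\floor{cpm/s}$ iff $p$ divides the residue of $cpm$ mod $s$, i.e.\ iff $s\mid m$.
\item So the density of $\{n:\ p\mid n,\ p\mid\floor{\alpha n}\}$ is about $1/(ps)$, not $p^{-2}(1+o(1))$.
\item The pairs $d=e=p$ alone therefore give total error $\gg s^{-1}\sum_{2s<p\le y,\ p\nmid c}p^{-1}$.
\end{itemize}
Your requirement thus forces $\log\log y=o(s)$, hence $\log\log y=o(\log\log\log N)$, at these $N$. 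This is exactly what the diagonal argument would return. Truncation, however, needs $\sum_{y<p\le N^{\varepsilon}}p^{-1}=o(\log\log N)$ even if the centred middle range is handled by a Tur\'an--Kubilius second moment, i.e.\ $\log\log y\sim\log\log N$.

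The fallback is therefore circular. With $y$ that small, the primes in $(y,N^{\varepsilon}]$ carry essentially all of the variance $\sim\log\log N$, so a second-moment bound cannot discard them; they are the whole problem. Your scheme does go through when $\alpha$ has finite irrationality measure, but that is the Diophantine case, not Theorem~\ref{pairwise}.

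The missing idea is that independence here holds only on average. In the example above the covariance $\mathbb{E}(XY)$ is of order $s^{-1}\log\log y$. This is negligible against $\log\log N$ only because the denominators $s$ of the scale-$N$ rational approximations of $\alpha$ tend to infinity, and it is never $o(1)$ uniformly. You therefore need two ingredients:
\begin{itemize}
\item a bound of the form $o(1/p)$, rather than $o(p^{-2})$, for a prime shared between $n$ and $\floor{\alpha n}$, extracting the saving from $s\to\infty$;
\item uniform main terms for the remaining tuples, after discarding a sparse set of bad primes.
\end{itemize}

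This is what the paper does.
\begin{itemize}
\item Lemma~\ref{key lin alg} produces rationals $\gamma_i\to\alpha_i$ of height $J^{O(L)}$ such that every near-integer relation $\|\sum_{ij}\alpha_im_{ij}\|_{\mathbb{Z}}\le N^{-1/4}$ among the relevant frequencies is an exact relation $\sum_{ij}\gamma_im_{ij}\in\mathbb{Z}$.
\item Primes dividing numerators or denominators of the $\gamma_i$, and primes $\le\log N$, are discarded at cost $O(\log\log\log N)$.
\item With the smoothing $\chi_\pm$, whose Fourier transform vanishes on $\mathbb{Z}\setminus\{0\}$, any tuple in which some prime occurs once contributes $O(J^{-1/8})$. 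So your factorised main term $1/(de)$ is in fact uniformly valid for coprime $d,e$ built from boundedly many good primes.
\item For a prime shared between the two coordinates, Lemma~\ref{type B lem} gives only $\ll p^{-2}(p/g_1+1)$, where $g_1\to\infty$ is the numerator of $\gamma_1/\gamma_2$ (the analogue of your $s$). This is exactly enough to make the shared-prime total $o\bigl((\log\log N)^{(a+b)/2}\bigr)$.
\end{itemize}
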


The assumption that $\alpha$ is irrational is necessary here. Indeed, if $\alpha = a/b$ is rational, then whenever $b\mid n$, $\floor{\alpha n} = \alpha n$ shares all but at most $\omega(ab)$ prime divisors with $n$. Therefore, with positive probability $b^{-1} + o(1)$, 
\begin{equation*}
    \omega(\floor{\alpha n}) = \omega(n) + O(1). 
\end{equation*}
Hence the random variable 
\begin{equation*}
    \left(\frac{\omega(n) - \log\log N}{\sqrt{\log\log N}},  \frac{\omega(\floor{\alpha n}) - \log\log N}{\sqrt{\log\log N}}\right)
\end{equation*}
cannot converge to a continuous two-dimensional distribution. 

Theorem~\ref{pairwise} may be viewed as an assertion of the multiplicative independence of $n$ and $\floor{\alpha n}$ for irrational $\alpha$. Earlier results of this form include the classical work of Watson~\cite{coprime}, who showed that $n$ and $\floor{\alpha n}$ are coprime with probability $6/\pi^2 + o(1)$, the probability that two independently chosen integers are coprime. 

Our main theorem will be a multivariate generalisation of Theorem~\ref{Beatty} concerning the joint distribution of $\omega(\floor{\alpha_1 n + \beta_1}), \dots, \omega(\floor{\alpha_k n + \beta_k})$ whenever the pairwise ratios $\alpha_i/\alpha_j$ are irrational for all $i\neq j$. 

\begin{thm} \label{main}
    Let $\alpha_1, \dots, \alpha_k > 0$, where $\alpha_i/\alpha_j$ is irrational whenever $i\neq j$. Let $\beta_1, \dots, \beta_k$ be real numbers. For $n\sim U[N]$, the random variable 
    \begin{equation*}
        \left(\frac{\omega(\floor{\alpha_1 n + \beta_1}) - \log\log N}{\sqrt{\log\log N}}, \dots, \frac{\omega(\floor{\alpha_k n + \beta_k}) - \log\log N}{\sqrt{\log\log N}}\right)
    \end{equation*}
    converges in distribution to the standard $k$-dimensional Gaussian $\mathcal{N}(0, I_k)$ as $N\rightarrow\infty$. 
\end{thm}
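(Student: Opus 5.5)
We will prove Theorem~\ref{main} by the method of moments, following the Granville--Soundararajan approach to the Erd\H{o}s--Kac theorem. By the Cram\'er--Wold device, or directly via joint moments, it suffices to show that for every fixed tuple of nonnegative integers $(m_1,\dots,m_k)$, the mixed moment
\begin{equation*}
    \frac{1}{N}\sum_{n\le N}\prod_{i=1}^k \Bigl(\omega_y(\floor{\alpha_i n+\beta_i}) - \mu_y\Bigr)^{m_i}
\end{equation*}
is asymptotic to $\prod_i \mathbb{E}(Z^{m_i})\,(\log\log N)^{\sum m_i/2}$, where $Z\sim\mathcal{N}(0,1)$. Here $\omega_y(m)=\sum_{p\le y,\,p\mid m}1$ counts only primes up to a truncation $y=N^{\varepsilon(N)}$ with $\varepsilon(N)\to0$ slowly, say $\log(1/\varepsilon)=o(\sqrt{\log\log N})$, and $\mu_y=\sum_{p\le y}1/p$. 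Replacing $\omega$ by $\omega_y$ costs at most $O(1/\varepsilon)$ primes, hence is harmless after normalisation. Similarly, $\mu_y=\log\log N+O(\log(1/\varepsilon))$ differs from $\log\log N$ by $o(\sqrt{\log\log N})$.

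Next we expand the moments. Write $\omega_y(m)-\mu_y=\sum_{p\le y}(1_{p\mid m}-1/p)$. Expanding the product gives a sum over tuples of primes $p_{i,j}\le y$ ($1\le i\le k$, $1\le j\le m_i$). Set $d_i=\prod_j p_{i,j}$ (squarefree part), so $d_i\le y^{m_i}$. The key input is an equidistribution statement: for squarefree $d_1,\dots,d_k\le N^{o(1)}$,
\begin{equation*}
    \frac{1}{N}\#\{n\le N: d_i\mid\floor{\alpha_i n+\beta_i}\ \forall i\} = \frac{1}{d_1\cdots d_k} + E(d_1,\dots,d_k),
\end{equation*}
with $\sum_{d_i\le y^{m_i}}|E|$ negligible. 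Once this holds, the main term factorises exactly as if the events $d_i\mid\floor{\alpha_in+\beta_i}$ were independent across $i$ with the multiplicative local densities of the Erd\H{o}s--Kac model. Granville--Soundararajan's combinatorics then produce the moments of $k$ independent Gaussians.

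To prove the equidistribution, observe that $d\mid\floor{\alpha n+\beta}$ holds iff $\{(\alpha n+\beta)/d\}\in[0,1/d)$. So the count is the number of $n\le N$ for which the point
\begin{equation*}
    \left(\frac{\alpha_1 n+\beta_1}{d_1},\dots,\frac{\alpha_k n+\beta_k}{d_k}\right)\bmod 1
\end{equation*}
lies in a box of volume $1/(d_1\cdots d_k)$. We bound the discrepancy by Erd\H{o}s--Tur\'an--Koksma. Up to a parameter $H$, this reduces to exponential sums $\sum_{n\le N}e(n\theta)$ with $\theta=\sum_i h_i\alpha_i/d_i$, where $0<\max|h_i|\le H$. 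Each such sum is $\ll\min(N,\|\theta\|^{-1})$.

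The main obstacle is controlling $\|\sum_i h_i\alpha_i/d_i\|$ uniformly, summed over all $d_i\le y^{m_i}$. Writing $D=\mathrm{lcm}(d_i)$ and $\theta=(\sum_i c_i\alpha_i)/D$ with integers $c_i\ll HD$, the pairwise irrationality of $\alpha_i/\alpha_j$ does \emph{not} by itself prevent $\sum c_i\alpha_i$ from being near an integer multiple of $D$. For $k\ge3$ there may even be rational linear relations among $1,\alpha_1,\dots,\alpha_k$. My plan is to avoid needing full linear independence. We will not prove independence of the joint law via a single Weyl bound. Instead we only need the count of simultaneous divisibility to be correct on average over $d_i\le N^{o(1)}$, and we may allow an exceptional set of tuples. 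I would first reduce to the case where the relevant linear form is nonzero: if $\sum c_i\alpha_i\in\mathbb{Q}$ for small $c_i$, the relation involves at least three $\alpha$'s (by pairwise irrationality), and we then argue that such relations only correlate the fractional parts in a way compatible with the divisibility events. Since each prime condition has density $1/p$, this seems plausible because $\floor{\cdot}$ ignores integer shifts. If that fails, I would fall back on the qualitative statement: for fixed $(d_1,\dots,d_k)$ and fixed $H$, the sequence above equidistributes on the closed subtorus cut out by the rational relations. We would then check that this subtorus still meets the box with proportion $1/\prod d_i$, using pairwise irrationality and the primality/coprimality structure. Finally, we let $y$ grow slowly enough, via a diagonal argument, that the qualitative error terms remain $o(1)$ after summing over the $y^{O(1)}$ tuples.
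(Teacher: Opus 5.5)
There is a genuine gap, located at the step you yourself call the main obstacle, and it cannot be closed in the form you set up. Your scheme needs the joint divisibility densities to equal $1/(d_1\cdots d_k)$ up to errors whose \emph{absolute} sum, with the moment weights, is $o((\log\log N)^{\ell/2})$. This is false.

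For $k=3$, take $\alpha_1=1$, $\alpha_2=\sqrt2$, $\alpha_3=1+\sqrt2$ and $\beta_i=0$; the pairwise ratios are irrational. Since $\floor{\alpha_3 n}=\floor{\alpha_1 n}+\floor{\alpha_2 n}$, the density of $\{pr_i\mid\floor{\alpha_i n},\ i=1,2,3\}$ (for distinct large primes $p,r_1,r_2,r_3$) is $\sim 1/(p^2r_1r_2r_3)$ rather than $1/(p^3r_1r_2r_3)$. Summed over such tuples, these errors are $\asymp(\log\log N)^3$, the same order as the main term of the $(2,2,2)$ mixed moment. The same example refutes your fallback claim that the subtorus meets the box in proportion $1/\prod d_i$: for $d_1=d_2=d_3=p$ the proportion is $1/p^2$.

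Even for $k=2$ the scheme fails. Take $\alpha_1=1$ and $\alpha_2=\alpha>a/b$, with $\alpha$ so close to $a/b$ that $\floor{\alpha n}=\floor{an/b}$ for all $n\le N$. For primes $p>b$ one has $p\mid n$ and $p\mid\floor{an/b}$ if and only if $pb\mid n$, so the density is $1/(pb)$, not $\approx 1/p^2$. Hence for $(d_1,d_2)=(pr_1,pr_2)$ with $p,r_1,r_2>a+b$, the errors sum to $\asymp(\log\log N)^3/b$. This dominates the $(\log\log N)^2$ main term of the $(2,2)$ moment whenever $b=b(N)=o(\log\log N)$, which happens for Liouville-type $\alpha$ along infinitely many $N$.

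In all these cases the moments are nevertheless correct. The offending terms cancel in the \emph{signed} expansion, because each $r_i$ occurs only once, but factorising the main term and bounding $\sum|E|$ cannot see this cancellation.

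Your tools also fail on the parts that are true. Erd\H{o}s--Tur\'an--Koksma is useless if some $\alpha_i$ is rational, which the hypotheses allow. With $\alpha_1=1$, the points $(n+\beta_1)/d_1 \bmod 1$ lie in a coset of $\frac{1}{d_1}\mathbb{Z}/\mathbb{Z}$, so the discrepancy is at least of order $1/d_1\ge 1/\prod d_i$, even though the true count is correct. Very well approximable $\alpha_i$ cause the same problem. The diagonal argument lets $y(N)$ grow at a rate dictated by the Diophantine properties of the $\alpha_i$, possibly arbitrarily slowly. Your truncation, however, needs $\log\log N-\log\log y=o(\sqrt{\log\log N})$, since the primes in $(y(N),N^{\varepsilon}]$ carry essentially all the variance.

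What is missing is a way to handle the signed quantities $\mathbb{E}\prod_{ij}(1_{p_{ij}\mid\floor{\alpha_i n+\beta_i}}-p_{ij}^{-1})$ directly, uniformly for primes up to $N^{o(1)}$, with no Diophantine input. The paper does this with three ingredients.
\begin{enumerate}
\item Lemma~\ref{key lin alg} converts every near-relation $\|\sum_{ij}\alpha_i m_{ij}\|_{\mathbb{Z}}\le N^{-1/4}$ of controlled height into an exact relation $\sum_{ij}\gamma_i m_{ij}\in\mathbb{Z}$, for $N$-dependent rationals $\gamma_i\to\alpha_i$. Primes dividing the $\gamma_i$, and primes $\le\log N$, are then discarded.
\item The indicator is smoothed by $\chi_+=\epsilon^{-1}1_{[0,1]}*1_{[0,\epsilon]}$ or $\chi_-=\epsilon^{-1}1_{[0,1]}*1_{[-\epsilon,0]}$, whose Fourier transforms vanish on $\mathbb{Z}\setminus\{0\}$. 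The sign is chosen separately on each of $N^{1/2}$ blocks via a major/minor arc analysis, so the smoothing error is $O(\epsilon^{1/4})$ even for rational $\alpha_i$.
\item Prime tuples are classified into types A/B/C/D.
\end{enumerate}
With these, a prime occurring once forces its frequency into $\mathbb{Z}$, where the transform vanishes, so type C terms are negligible; this is exactly the cancellation your examples require. A prime shared between indices $i\neq i'$ costs only $o(p^{-1})$, because the numerator of $\gamma_i/\gamma_{i'}$ tends to infinity; this is the only place pairwise irrationality is used. Your truncation and Granville--Soundararajan bookkeeping are fine, but without a device of this kind the core estimate is missing.
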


Theorem~\ref{main} admits Theorem~\ref{pairwise} as a bivariate special case. As remarked above in the context of Theorem~\ref{pairwise}, the irrationality of $\alpha_i/\alpha_j$ for $i\neq j$ is necessary for the conclusion of Theorem~\ref{main} to hold for all choices of $\beta_1, \dots, \beta_k$. 

Multivariate generalisations of the Erd\H{o}s--Kac theorem have been studied in \cite{Tanaka}, which established a multivariate version of Theorem~\ref{integer polynomial}, and in \cite{multivariate} in a general setting. In particular, Theorem 2.1 of~\cite{multivariate} implies special cases of Theorem~\ref{main} where $\alpha_1, \dots, \alpha_k$ are rationally independent and satisfy suitable Diophantine type conditions. However, the assumptions of Theorem 2.1 of~\cite{multivariate} fail for general choices of parameters in Theorem~\ref{main}. 

We end by discussing the quantitative versions of the aforementioned Erd\H{o}s--Kac laws. Such quantitative bounds are often available under Diophantine type assumptions on the relevant irrational parameters. Here, we discuss the prospects for universal quantitative bounds on the rates of convergence without any such assumptions on the irrational parameters. 

The failure of Theorem~\ref{pairwise} for rational values of $\alpha$ may be leveraged to violate any quantitative bounds for Theorem~\ref{pairwise} by choosing $\alpha$ to be sufficiently Liouville-like. As such, the rate of convergence in Theorem~\ref{pairwise} and the more general Theorem~\ref{main} may be arbitrarily slow. 

In the case of Theorem~\ref{Beatty} for Beatty sequences $\floor{\alpha n + \beta}$, it was believed~\cite{B-S} that quantitative bounds require Diophantine type assumptions on $\alpha$. We show that in fact a universal quantitative bound may be given in this setting. 

\begin{thm} \label{Beatty quant}
    Let $\alpha > 0$, $\beta\in\mathbb{R}$. For $n\sim U[N]$, the Kolmogorov distance (see Section~\ref{quant conv}, Definition~\ref{Kolmogorov dist}) between the random variable 
    \begin{equation*}
        \frac{\omega(\floor{\alpha n + \beta}) - \log\log N}{\sqrt{\log\log N}}
    \end{equation*}
    and the standard Gaussian $\mathcal{N}(0, 1)$ is bounded above by $O_{\alpha, \beta}\left(\frac{\log\log\log N}{\sqrt{\log\log N}}\right)$ as $N\rightarrow\infty$, where the implied constant may depend on $\alpha$ and $\beta$. 
\end{thm}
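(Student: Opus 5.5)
We will prove Theorem~\ref{Beatty quant} by comparing $\omega(\floor{\alpha n+\beta})$ with a sum of independent Bernoulli variables over small primes, and we treat the Diophantine difficulty by splitting according to the rational approximations of $\alpha$. Set $y = N^{1/(\log\log N)^{C}}$ for a suitable constant $C$, and truncate to $\omega_y(m) = \#\{p\le y : p\mid m\}$. Since $\floor{\alpha n+\beta}\ll N$, we have $\omega(m)-\omega_y(m)\le (\log\log N)^{C}$ deterministically. On average $\sum_{y<p\le \alpha N}\mathbb{P}(p\mid m)\ll \log\log\log N$. Hence the truncation shifts the normalised variable by $O(\log\log\log N/\sqrt{\log\log N})$ outside an event of that probability. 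This is the source of the $\log\log\log N$ in the bound. The average is controlled by the trivial estimate $\#\{n\le N: p\mid\floor{\alpha n+\beta}\}\ll N/p + 1$, which holds because the values $\floor{\alpha n+\beta}$ hit each residue class essentially once per $1/\alpha$ integers.

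For the truncated variable we use the standard Kolmogorov-distance comparison: a Berry--Esseen type bound for $\sum_{p\le y} X_p$ with independent $X_p\sim \mathrm{Bernoulli}(1/p)$, combined with a moment or characteristic-function comparison. For instance, we take moments up to order roughly $\log\log N$, or characteristic functions with Esseen's smoothing inequality. This reduces the problem to showing
\begin{equation*}
    \mathbb{P}\bigl(d\mid \floor{\alpha n+\beta}\bigr) = \frac{1}{d} + E_d
\end{equation*}
for squarefree $d$ composed of primes $\le y$, with $d\le N^{1/2}$ say, and with $\sum_d |E_d|$ small.

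Now $d\mid\floor{\alpha n+\beta}$ means $\{(\alpha n+\beta)/d\}\in[0,1/d)$. By Erd\H{o}s--Tur\'an/Koksma, the error $E_d$ is governed by the discrepancy of $n\alpha/d \bmod 1$.

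This is the main obstacle: with no Diophantine hypothesis, the discrepancy can be poor for every $d$. We take a continued-fraction convergent $a/q$ of $\alpha$ with $q$ in a suitable range, and split into two cases.

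\textbf{Case 1: $q$ is large}, i.e.\ $q$ lies between $(\log N)^{A}$ and $N/(\log N)^A$. Then the standard bound $\sum_{h\le H}\min(N,\|h\alpha/d\|^{-1})$ yields a discrepancy that is small uniformly in $d\le N^{1/2}$. The only exception is when $d$ shares a structure with $q$; such $d$ are rare and can be discarded.

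\textbf{Case 2: $\alpha$ is extremely close to a rational $a/q$ with small $q$}, namely $|\alpha-a/q|<1/(q(\log N)^{A})$. On blocks of length $L = (\log N)^{A/2}$, the sequence $\floor{\alpha n+\beta}$ coincides with $\floor{(a n + b)/q}$ for an integer $b$ that changes slowly. This is a linear polynomial in $n$ on arithmetic progressions mod $q$, so the divisibility probability $\mathbb{P}(d\mid\cdot)$ equals $1/d$ exactly up to a boundary error, for $(d,a)=1$. The few primes dividing $a$ (where $a\le \alpha q$, with $q$ bounded by a power of $\log N$) contribute only $O(\log\log\log N)$ in total. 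On each block, the variable behaves like $\omega$ of an integer linear form, for which the quantitative Erd\H{o}s--Kac bound holds.

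The main difficulty, then, is to make the dichotomy uniform so that every $\alpha$ falls into one of the two cases at every scale $N$. The plan is to use Dirichlet's theorem with parameter $Q = N/(\log N)^A$. Either the resulting $q$ exceeds $(\log N)^A$, giving Case 1, or it does not, in which case $|\alpha - a/q|\le 1/(qQ)$ is tiny, giving Case 2. The implied constants depend on $\alpha$ only through $\alpha$ itself: the sizes of $a$ relative to $q$, and the range of $\floor{\alpha n+\beta}$.
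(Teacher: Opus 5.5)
\textbf{Verdict: genuine gap.} Your architecture matches the paper's. You truncate, compare with the Kubilius model through binomial moments and characteristic functions, apply Esseen, and reduce to a uniform estimate $\mathbb{P}(d\mid\floor{\alpha n+\beta})=1/d+E_d$ after discarding primes dividing the numerator of a rational approximation to $\alpha$. The gap is in that uniform estimate, which neither of your cases delivers.

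\textbf{Case 1.} The claim that the discrepancy is small uniformly for $d\le N^{1/2}$, apart from $d$ related to $q$, is false. Divisors of $q$ are harmless. The dangerous $d$ are those sharing large factors with the \emph{numerator} of some rational approximation to $\alpha$ with error $\lesssim 1/N$, and at your scale $Q=N/(\log N)^A$ that approximation need not be $a/q$.

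For a counterexample, let $a'/q'$ be the convergent preceding $a/q$, so $|\alpha q'-a'|<1/q$ and $\gcd(a',aq)=1$. For suitable Liouville-type $\alpha$ and infinitely many $N$, you can have $q'\approx N^{0.45}$ and Dirichlet denominator $q\approx N^{0.6}$.
\begin{itemize}
\item The total drift along a progression of step $q'$ is at most $N/(qq')\to 0$, so $\floor{\alpha n+\beta}\bmod a'$ is constant along almost every progression $n\equiv r\pmod{q'}$ in $[N]$.
\item Hence $\mathbb{P}(a'\mid\floor{\alpha n+\beta})$ lies within $o(1/q')$ of an integer multiple of $1/q'$.
\item But $1/a'\sim 1/(\alpha q')$ and $1/\alpha\notin\mathbb{Z}$, so $d=a'\le N^{1/2}$, coprime to both $a$ and $q$, has relative error $\gg_\alpha 1$.
\end{itemize}
One can show such exceptional $d$ carry small total weight, but that needs an argument your plan does not contain. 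The ``standard bound'' cannot see this, because $a/(qd)$ is no longer a Dirichlet-quality approximation to $\alpha/d$ once $qd>Q$.

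\textbf{Case 2.} This fails as executed. Blocks of length $(\log N)^{A/2}$ are far shorter than the period $qd$ of $\floor{(an+b)/q}\bmod d$. Inside a block there is therefore no equidistribution modulo $d$; the ``boundary error'' is the whole count. Nor is any quantitative Erd\H{o}s--Kac law available on intervals of polylogarithmic length.

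The fix is to use the full Dirichlet bound $|q\alpha-a|\le 1/Q$. Then $b_n=\floor{q\beta+(q\alpha-a)n}$ takes $O((\log N)^A)$ values on $[N]$, each on an interval. On each such interval $\floor{\alpha n+\beta}=\floor{(an+b_n)/q}$ exactly, and when $(d,a)=1$ this is divisible by $d$ exactly $q$ times per period of length $qd$. That gives $|E_d|\ll q(\log N)^A/N$.

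\textbf{How the paper avoids the dichotomy.} It places the Diophantine input at a small power of $N$.
\begin{itemize}
\item It takes $\gamma=a/b$ to be the best approximation to $\alpha$ of height $\le N^{1/8}$, and discards the primes dividing $a$ at cost $O(\log\log\log N)$.
\item It only needs $E_d$ for $d\le N^{1/400}$, namely products of at most $\log\log N/400$ primes below $N^{1/\log\log N}$. For higher binomial moments it uses the trivial bound $\mathbb{P}(d\mid\cdot)\ll 1/d$.
\item Every other rational of height $\le N^{1/8}$ lies $\gg N^{-1/4}$ from $\alpha$, so the approximant is unique at the precision relevant to every $d$ in play.
\item Consequently, for $(d,a)=1$ no frequency $m\in d^{-1}\mathbb{Z}\setminus\mathbb{Z}$ of moderate size has $\|\alpha m\|_{\mathbb{Z}}$ small. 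This gives $|E_d|\ll N^{-1/200}$ uniformly, with no split according to the size of the denominator.
\end{itemize}

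\textbf{Truncation step.} This needs more than Markov's inequality on the first moment, which only gives an exceptional probability $O(1/K)$ for a shift of $K\log\log\log N$. Bound $\mathbb{E}\binom{\omega-\omega_y}{r}\ll (C\log\log\log N)^r/r!$ instead, as the paper does, to get a tail of $O(1/\log\log N)$.
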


Finally, we contrast this quantitative result for Beatty sequences with the case of $\floor{f(n)}$ for a higher-degree polynomial $f$ with an irrational non-constant coefficient (``a generalised polynomial"). An Erd\H{o}s--Kac law for general $\floor{f(n)}$ of this form in fact remains open, and may be viewed a common generalisation of Theorem~\ref{Beatty} for Beatty sequences and Theorem~\ref{integer polynomial} for integer polynomials. 

\begin{conj} \label{gen poly conj}
    Let $f$ be a polynomial with at least one irrational non-constant coefficient. For $n\sim U[N]$, the random variable 
    \begin{equation*}
        \frac{\omega(\floor{f(n)}) - \log\log N}{\sqrt{\log\log N}}
    \end{equation*}
    converges in distribution to $\mathcal{N}(0, 1)$ as $N\rightarrow\infty$. 
\end{conj}

In the case where the leading coefficient of the polynomial $f$ is irrational and of finite type, Conjecture~\ref{gen poly conj} may be answered affirmatively by combining Weyl's inequality (see~\cite[Lemma 2.4]{Vaughan}) with Fourier theoretic methods employed in our proof of Theorem~\ref{main}. In particular, this covers (Lebesgue) almost all choices of the leading coefficient and all algebraic irrational choices of the leading coefficient. 

Assuming that Conjecture~\ref{gen poly conj} holds, we show that the rate of convergence in Conjecture~\ref{gen poly conj} may be arbitrarily slow for non-linear $f$, in contrast with Theorem~\ref{Beatty quant} for the linear case of Beatty sequences. 

\begin{thm} \label{gen poly quant}
    For any sequence $\eta_N\searrow 0$ as $N\rightarrow\infty$, there exists a polynomial $f$ with at least one irrational non-constant coefficient, such that the following holds. For $n\sim U[N]$, the Kolmogorov distance between the random variable 
    \begin{equation*}
        \frac{\omega(\floor{f(n)}) - \log\log N}{\sqrt{\log\log N}}
    \end{equation*}
    and the standard Gaussian $\mathcal{N}(0, 1)$ is not $O(\eta_N)$ as $N\rightarrow\infty$. 
\end{thm}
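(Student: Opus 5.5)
The idea is to build $f(n)=\theta\, n(n+1)$ with a Liouville-type $\theta$. Then, at a sparse sequence of scales $N_j$, the values $\floor{f(n)}$ on a positive-proportion subset of $[N_j]$ coincide with a \emph{reducible} integer polynomial. A reducible polynomial roughly doubles $\omega$, and so places mass of order $2^{-a_j}$ far out in the right tail. Since $\eta_N\to 0$, we may choose $N_j$ so large that $\eta_{N_j}$ is much smaller than this mass. The construction is then an inductive choice of $\theta=\sum_{k\ge 1}2^{-a_k}$, with $a_1<a_2<\cdots$ growing extremely fast. This $\theta$ is irrational because its binary expansion is not eventually periodic. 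The argument will not need Conjecture~\ref{gen poly conj}.

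\textbf{Step 1: exact agreement at scale $N_j$.} Write $r_j=\sum_{k\le j}2^{-a_k}=p_j/2^{a_j}$ with $p_j$ odd, and note $0<\theta-r_j\le 2\cdot 2^{-a_{j+1}}$. Take $n\le N_j$ with $2^{a_j}\mid n$. Then $r_j n(n+1)$ is an integer, and
\begin{equation*}
0<\theta n(n+1)-r_jn(n+1)\le 2\cdot 2^{-a_{j+1}}(N_j+1)^2<1,
\end{equation*}
provided $a_{j+1}$ is chosen after $N_j$ with $2^{a_{j+1}}>2(N_j+1)^2$. Hence $\floor{f(n)}=p_j\,(n/2^{a_j})\,(n+1)$. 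The factors $n/2^{a_j}$ and $n+1$ are coprime, so
\begin{equation*}
\omega(\floor{f(n)})\ge \omega(n/2^{a_j})+\omega(n+1)-\omega(p_j)\ge \omega(n/2^{a_j})+\omega(n+1)-O(a_j).
\end{equation*}
Here we used $\omega(p_j)\le \log_2 p_j\le a_j+1$, since $p_j<2^{a_j}$. At this point $a_j$ is fixed, and $N_j$ will be taken so large that $a_j=o(\log\log N_j)$.

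\textbf{Step 2: the tail event.} Let $L=\log\log N_j$, and let $n=2^{a_j}m$ range over the progression with $m\le N_j/2^{a_j}$. We apply the Turán--Kubilius inequality (a second-moment bound) to $\omega(m)$ and to $\omega(2^{a_j}m+1)$ separately; the latter is a shifted progression, and the standard variance computation works for fixed modulus $2^{a_j}$ once $N_j$ is large. This shows that each of these is at least $0.9L$ with conditional probability $1-o(1)$. Therefore, with probability at least $2^{-a_j}(1-o(1))$ over $n\sim U[N_j]$, we get $\omega(\floor{f(n)})\ge 1.8L-O(a_j)\ge 1.7L$. In other words, the normalised variable $Z$ satisfies $Z\ge 0.7\sqrt L$.

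\textbf{Step 3: choosing $N_j$ and concluding.} At $x=0.7\sqrt L$ the Kolmogorov distance is at least
\begin{equation*}
\PP{Z> x}-\PP{\mathcal N(0,1)> x}\ge 2^{-a_j}(1-o(1))-e^{-0.2L}.
\end{equation*}
Given $a_j$, choose $N_j$ so large that three things hold: $e^{-0.2\log\log N_j}<2^{-a_j}/8$, the $o(1)$ terms are below $1/4$, and $\eta_{N_j}<2^{-a_j}/(8j)$. The last is possible since $\eta_N\searrow 0$. Then choose $a_{j+1}$ as in Step 1. This gives distance $\ge 2^{-a_j}/2\ge 4j\,\eta_{N_j}$ along $N_j\to\infty$, so the distance is not $O(\eta_N)$.

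The main obstacle is Step 2. It needs a uniform lower-tail concentration for $\omega$ along an arithmetic progression, and for the shifted values $2^{a_j}m+1$. This should be a routine Turán--Kubilius computation, because $a_j$ is fixed before $N_j\to\infty$. The only care needed is the order of quantifiers in the interleaved choice of $a_j$ and $N_j$.
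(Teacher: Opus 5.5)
Your proposal is correct and follows essentially the same route as the paper. The paper uses $\alpha=\sum a_i^{-1}$ and $f(n)=\alpha(n-1)n^{d-1}$ for arbitrary $d\ge 2$ where you use $\theta=\sum 2^{-a_k}$ and $f(n)=\theta n(n+1)$; otherwise the argument is identical: on the progression of $n$ divisible by the denominator $b$ of the partial sum, the floor becomes an integer divisible by two coprime factors, which doubles $\omega$ on a set of mass $\asymp b^{-1}$ that dominates both the Gaussian tail and $\eta_N$. One small remark: subtracting $\omega(p_j)$ in Step 1 is unnecessary, since multiplying by $p_j$ cannot decrease $\omega$.
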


All results here remain valid if, instead of $\omega$, we consider the total number $\Omega$ of prime divisors counted with multiplicity. Our arguments remain valid over short intervals of polynomial length. 

\textbf{Organisation of the paper.} Section~\ref{proof sketch} outlines the proof of Theorem~\ref{main}, which is carried out in Sections~\ref{lin alg}, \ref{moment decomp sec}, \ref{type B and D}, \ref{type A and C} and~\ref{assembly}. Section~\ref{quant conv} discusses Theorems~\ref{Beatty quant} and~\ref{gen poly conj} on quantitative bounds on the rate of convergence. 

\textbf{Notation.} We use $f\ll g$ to denote $f = O(g)$, $f\lll g$ to denote $f = o(g)$, and $f\asymp g$ to denote $f = \Theta(g)$. All asymptotic relations are defined with respect to the limit as the parameter $N$ tends to infinity. All implied constants will be absolute unless otherwise stated. Despite the central role of floor functions in this paper to give positive integers whose prime factors are studied, we omit floor functions of real cutoff parameters where appropriate for the sake of clarity. Throughout, $p$ (with or without subscripts) denotes a prime. 

Given a vector $\mu\in\mathbb{R}^k$ and a positive-definite matrix $\sigma\in \mathbb{R}^{k\times k}$, we denote the multivariate normal distribution with mean $\mu$ and covariance matrix $\Sigma$ by $\mathcal{N}(\mu, \Sigma)$. In particular, $\mathcal{N}(0, I_k)$ denotes the standard $k$-dimensional normal distribution, where $I_k$ denotes the $k\times k$ identity matrix. 

Given a probability distribution $\nu$, $\mathbb{P}_{n\sim \nu}(\cdot)$ and $\mathbb{E}_{n\sim \nu}(\cdot)$ denote the probability and expectation with respect to a random variable $n$ drawn from $\nu$, respectively. 

\section{Outline of the proof} \label{proof sketch}

We first give an outline informally sketching the ideas behind our proof of Theorem~\ref{main}, which will be presented in Sections~\ref{lin alg}, \ref{moment decomp sec}, \ref{type B and D}, \ref{type A and C} and~\ref{assembly}. The details presented here may differ from the more technical treatment to follow. 

To show the convergence to the multi-dimensional Gaussian asserted by Theorem~\ref{main}, we shall employ the method of moments. To do so, we rely on the classical result of probability theory that the distributional convergence to $\mathcal{N}(0, I_k)$ follows from the appropriate convergences of all mixed moments (see, for example, \cite{moment-det}, Section 30). 

\begin{thm} \label{moment-det}
    Let $X_N = (X_{1, N}, \dots, X_{k, N})$ be a sequence of random variables taking values in $\mathbb{R}^k$, indexed by a positive integer $N$. Let $Z = (Z_1, \dots, Z_k)\sim \mathcal{N}(0, I_k)$. If, for any non-negative integers $\ell_1, \dots, \ell_k$, the mixed moment
    \begin{equation*}
        \EE{X_{1, N}^{\ell_1}\cdots X_{k, N}^{\ell_k}}
    \end{equation*}
    is well-defined and converges to $\EE{Z_1^{\ell_1}\cdots Z_k^{\ell_k}}$ as $N\rightarrow\infty$, then $X_N$ converges in distribution to $\mathcal{N}(0, I_k)$ as $N\rightarrow\infty$. 
\end{thm}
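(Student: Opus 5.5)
The plan is to reduce Theorem~\ref{moment-det} to two standard facts: tightness plus identification of subsequential limits, and the determinacy of the Gaussian by its moments. Tightness comes from the second moments. Taking $\ell_i = 2$ and all other exponents zero, the hypothesis gives $\EE{X_{i,N}^2} \to 1$, so $\sup_N \EE{|X_N|^2} < \infty$ for $N$ large. Chebyshev's inequality then shows the family of laws of $X_N$ is tight, and by Prokhorov's theorem every subsequence has a further subsequence converging in distribution to some probability measure $\mu$ on $\mathbb{R}^k$.

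Next I will identify $\mu$ through its moments. Fix a multi-index $(\ell_1, \dots, \ell_k)$ with $|\ell| = \sum \ell_i$. The convergent $2|\ell|$-th moments, for instance $\EE{\sum_i X_{i,N}^{2|\ell|}}$, are bounded in $N$. Since $|x^{\ell}| \le |x|^{|\ell|} \ll_k 1 + \sum_i x_i^{2|\ell|}$, the family $X_{1,N}^{\ell_1}\cdots X_{k,N}^{\ell_k}$ is uniformly integrable. Along the convergent subsequence, the continuous mapping theorem combined with uniform integrability gives
\begin{equation*}
\int x_1^{\ell_1}\cdots x_k^{\ell_k}\, d\mu = \lim \EE{X_{1,N}^{\ell_1}\cdots X_{k,N}^{\ell_k}} = \EE{Z_1^{\ell_1}\cdots Z_k^{\ell_k}}.
\end{equation*}
So $\mu$ has exactly the mixed moments of $\mathcal{N}(0, I_k)$.

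The main step, and the only genuinely nontrivial one, is showing that $\mathcal{N}(0,I_k)$ is determined by its mixed moments. I would use the Cram\'er--Wold device to pass to one dimension. For $t \in \mathbb{R}^k$, the moments of the pushforward of $\mu$ under $x \mapsto t\cdot x$ are polynomial combinations of mixed moments. They therefore agree with the moments of $\mathcal{N}(0,|t|^2)$, namely $|t|^{m}(m-1)!!$ for even $m$ and $0$ for odd $m$. These satisfy Carleman's condition, or more simply the moment generating function is finite everywhere. Hence the characteristic function of $t\cdot Y$, with $Y\sim\mu$, is given by its absolutely convergent power series and equals $e^{-s^2|t|^2/2}$. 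Concretely, Taylor's theorem with the bound $\EE{|t\cdot Y|^m}/m!$ decaying superexponentially makes the series converge. Taking $s=1$ gives the characteristic function $e^{-|t|^2/2}$ of $\mu$, so $\mu = \mathcal{N}(0,I_k)$.

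Since every subsequence of $X_N$ has a further subsequence converging in distribution to the same limit $\mathcal{N}(0,I_k)$, the whole sequence converges in distribution to $\mathcal{N}(0, I_k)$, which proves Theorem~\ref{moment-det}.
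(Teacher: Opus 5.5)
Your proof is correct and is the standard argument (tightness from second moments, uniform integrability to transfer mixed moments to subsequential limits, and determinacy of $\mathcal{N}(0, I_k)$ by its moments via the characteristic function). The paper gives no proof of its own and simply cites this classical result from Billingsley, Section 30, where essentially this argument appears. One small precision: justify uniform integrability by $\bigl(X_{1,N}^{\ell_1}\cdots X_{k,N}^{\ell_k}\bigr)^2 \le |X_N|^{2|\ell|} \ll_k \sum_i X_{i,N}^{2|\ell|}$, which gives a uniform $L^2$ bound. As written, dominating $|x^{\ell}|$ by a function with bounded expectation only gives $L^1$-boundedness, unless you also note that the dominating function grows strictly faster.
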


Appealing to Theorem~\ref{moment-det} reduces Theorem~\ref{main} to the computation of the mixed moments
\begin{equation} \label{sketch main moment}
    \mathbb{E}_{n\sim U[N]}\left(\prod_{i = 1}^k(\omega(\floor{\alpha_i n + \beta_i}) - \log\log N)^{\ell_i}\right). 
\end{equation}
Following the approach of Granville and Soundararajan~\cite{method-of-moments-simplified}, we decompose $\omega(\floor{\alpha_i n + \beta_i}) - \log\log N$ as
\begin{equation*}
    \omega(\floor{\alpha_i n + \beta_i}) - \log\log N = \sum_{p\leq R}\left(1_{p\mid \floor{\alpha_i n + \beta_i}} - \frac{1}{p}\right) + O\left(\frac{\log N}{\log R}\right), 
\end{equation*}
where $R$ is a cutoff chosen so that $1\lll \frac{\log N}{\log R}\lll \sqrt{\log\log N}$. This decomposition in turn decomposes the moment (\ref{sketch main moment}) into terms of the form
\begin{equation} \label{sketch red moment}
    \mathbb{E}_{n\sim U[N]}\left(\prod_{i = 1}^k\prod_{j = 1}^{\ell_i}(1_{p_{ij}|\floor{\alpha_i n + \beta_i}} - p_{ij}^{-1})\right), 
\end{equation}
for not necessarily distinct primes $p_{ij}\leq R$, where $i\in [k]$ and $j\in [\ell_i]$. In the spirit of Isserlis' theorem, the main contribution to (\ref{sketch main moment}) comes from the case where for each $i$, the primes $p_{ij}$ come in pairs. This decomposition of the mixed moment (\ref{sketch main moment}) into terms of the form (\ref{sketch red moment}) will be discussed in more detail in Section~\ref{moment decomp sec}. 

We shall employ Fourier analytic techniques to estimate (\ref{sketch red moment}). To do so, we write (\ref{sketch red moment}) informally as
\begin{equation} \label{sketch integral 1}
    \int \left(\frac{1}{N}\sum_{n = 1}^N\delta_n(x)\right)\cdot\left(\prod_{i = 1}^k\prod_{j = 1}^{\ell_i}(1_{p_{ij}|\floor{\alpha_i x + \beta_i}} - p_{ij}^{-1})\right)dx, 
\end{equation}
which we compute using a version of Plancherel's identity. The analytically precise treatment will be given in Subsections~\ref{BD FT} and~\ref{Fourier}. 

As the function $x\mapsto 1_{p_{ij}|\floor{\alpha_i x + \beta_i}} - p_{ij}^{-1}$ is $p_{ij}/\alpha_i$-periodic, its Fourier transform is a sum of discrete masses supported on $(2\pi\alpha_i/p_{ij})\mathbb{Z}$. On the other hand, the Fourier transform of $\frac{1}{N}\sum_{n = 1}^N\delta_n(x)$ is concentrated within $\sim N^{-1}$ of $2\pi\mathbb{Z}$. This application of the Fourier transform therefore allows us to directly leverage the rational dependencies between $\alpha_i$'s. Structurally, applying a Fourier transform allows us to write (\ref{sketch integral 1}) as a sum
\begin{equation*}
    \sum_{(m_{ij})\in \prod (p_{ij}^{-1}\mathbb{Z})}\theta\left(\sum_{i = 1}^k\sum_{j = 1}^{\ell_i}\alpha_i m_{ij}\right) \prod_{i = 1}^k\prod_{j = 1}^{\ell_i}\frac{\phi(m_{ij})}{p_{ij}}, 
\end{equation*}
over tuples $(m_{ij})$ of $m_{ij}\in p_{ij}^{-1}\mathbb{Z}$, where $\theta$ is concentrated within $\sim N^{-1}$ of $\mathbb{Z}$, and $\phi$ decays to zero at infinity. As such, the main contribution originates from the tuples $(m_{ij})$ of $m_{ij}\in p_{ij}^{-1}\mathbb{Z}$ for which $\sum_{i = 1}^k\sum_{j = 1}^{\ell_i}\alpha_i m_{ij}$ is close to an integer. A suitable characterisation of such tuples will be given in Section~\ref{lin alg}. Following this characterisation, Section~\ref{lin alg} also gives the precise treatment which reduces Theorem~\ref{main} to the moment estimate Proposition~\ref{reduced main}. 

Heuristically, for generic values of $\alpha_1, \dots, \alpha_k$, 
\begin{equation*}
    \sum_{i = 1}^k\sum_{j = 1}^{\ell_i}\alpha_i m_{ij} = \sum_{i = 1}^k\left(\sum_{j = 1}^{\ell_i}m_{ij}\right)\alpha_i 
\end{equation*}
may only be within $\sim N^{-1}$ of an integer when $\sum_j m_{ij} = 0$ for each $i\in [k]$. This ``entry-wise" vanishing condition may be interpreted as the source of independence of $\omega(\floor{\alpha_i n + \beta_i})$ across different choices of $i\in [k]$. 

However, to apply Plancherel's identity to (\ref{sketch integral 1}), we need to ensure that the Fourier transform of 
\begin{equation*}
    \prod_{i = 1}^k\prod_{j = 1}^{\ell_i}(1_{p_{ij}|\floor{\alpha_i x + \beta_i}} - p_{ij}^{-1})
\end{equation*}
is absolutely convergent. Unfortunately, this fails to be the case, due to the discontinuous nature of the indicator function $1_{p_{ij}|\floor{\alpha_i x + \beta_i}}$. We shall remedy this situation by replacing this indicator by a suitable continuous approximant. As it turns out, two different approximants are needed. These two approximation of the indicator function and the subsequent Fourier analysis will be carried out in Sections~\ref{type B and D} and~\ref{type A and C}. 

Finally, the estimation of the mixed moment (\ref{sketch main moment}) will be done in Section~\ref{assembly}, completing the proof of Theorem~\ref{main}. 

\section{Linear algebra over the rationals} \label{lin alg}

Throughout the proof of Theorem~\ref{main}, we treat $\alpha_1, \dots, \alpha_k$ as absolute constants, and implicitly assume that $N$ is sufficiently large in an absolute sense. 

For a real number $x$, let $\|x\|_{\mathbb{Z}} = \min_{m\in \mathbb{Z}} |x - m|$ denote its distance to the closest integer. For primes $p_{ij}\leq R$ (where $i\in [k]$, $j\in [\ell_i]$), we wish to characterise the tuples $(m_{ij})$ of $m_{ij}\in p_{ij}^{-1}\mathbb{Z}$ satisfying $\|\sum_{i = 1}^k\sum_{j = 1}^{\ell_i}\alpha_im_{ij}\|_{\mathbb{Z}}\leq N^{-1/4}$. We shall use $\sum_{ij}$ to denote this sum over $\sum_{i = 1}^k\sum_{j = 1}^{\ell_i}$. 

\begin{df}
    For coprime integers $a\in\mathbb{Z}, b\in \mathbb{Z}^+$, let the \emph{height} of the rational number $a/b$ be $\max(|a|, b)$. Let $\mathcal{Q}(h)$ denote the set of rational numbers of height at most $h$. 
\end{df}

Note that if $x\in \mathcal{Q}(h_1), y\in \mathcal{Q}(h_2)$, then $x \pm y, xy, x/y$ are all in $\mathcal{Q}(2h_1h_2)$. 

\begin{lem} \label{key lin alg}
    Let $R, J, L$ be functions of $N$ tending to infinity with $N$, such that $R \ll J\leq N^{o(L^{-1})}$. Let $p_{ij}\leq R$ where $i\in [k]$, and $j \in [\ell_i]$. Let $\ell = \ell_1 + \dots + \ell_k$. 
    
    There exists $\gamma_1, \dots, \gamma_k\in \mathcal{Q}\left(J^{O(L)}\right)$ dependent only on $N$ such that
    \begin{enumerate}
        \item whenever $L \geq \ell$, if $m_{ij}\in p_{ij}^{-1}\mathbb{Z}\cap(-J, J)$ satisfies $\|\sum_{ij}\alpha_im_{ij} \|_{\mathbb{Z}}\leq N^{-1/4}$, then $\sum_{ij}\gamma_im_{ij}\in\mathbb{Z}$, 
        \item $\gamma_i\rightarrow \alpha_i$ as $N\rightarrow\infty$. 
    \end{enumerate}
\end{lem}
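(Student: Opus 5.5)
The plan is to turn the approximate relation into an exact linear relation with integer coefficients, and then to pick $\gamma$ as an exact rational solution of all such relations near $(\alpha_1,\dots,\alpha_k)$. First I would clear denominators. Given $m_{ij}\in p_{ij}^{-1}\mathbb{Z}\cap(-J,J)$, put $x_i=\sum_{j}m_{ij}$ and $Q=\operatorname{lcm}(p_{ij})\le R^{\ell}$. Then $n_i:=Qx_i\in\mathbb{Z}$ and $|n_i|\le \ell JR^{\ell}$. Let $c$ be the integer nearest to $\sum_i\alpha_ix_i$, and set $n_0:=-Qc$. Then $|n_0|\ll \ell JR^{\ell}$ and
\begin{equation*}
    \Bigl|n_0+\sum_{i=1}^k\alpha_in_i\Bigr|\le QN^{-1/4}.
\end{equation*}
Since $R\ll J$ and $\ell\le L$, all of these quantities are at most $H:=J^{CL}$ for a suitable constant $C$. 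Moreover $H=N^{o(1)}$ by the hypothesis $J\le N^{o(L^{-1})}$. Note that $H$ depends only on $N$ (through $J,L$) and not on the particular primes.

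Next I would collect all such relations into one space. Let $S\subset\mathbb{Z}^{k+1}$ be the set of integer vectors $w$ with $\|w\|_\infty\le H$ and $|\langle w,(1,\alpha)\rangle|\le HN^{-1/4}$, where $(1,\alpha)=(1,\alpha_1,\dots,\alpha_k)$, and let $V$ be the $\mathbb{Q}$-span of $S$. Choose a basis $w_1,\dots,w_d$ of $V$ from $S$. I claim $d\le k$. If instead $d=k+1$, the matrix $W$ with rows $w_r$ is an invertible integer matrix, so $|\det W|\ge1$. Cramer's rule then bounds $(1,\alpha)=W^{-1}\bigl(\langle w_r,(1,\alpha)\rangle\bigr)_r$ by $\ll H^{k}\cdot HN^{-1/4}\to0$. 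This contradicts the first coordinate being $1$.

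For the construction of $\gamma$, extend $w_1,\dots,w_d$ by standard basis vectors $e_{t}$, $t\in T$, to a basis of $\mathbb{Q}^{k+1}$. Let $D=H^{k+3}$, and let $a_t/D$ be rational approximations to the coordinates $(1,\alpha)_t$ with error at most $1/D$. Define $u\in\mathbb{Q}^{k+1}$ by the system
\begin{equation*}
    \langle w_r,u\rangle=0\quad(r\le d),\qquad \langle e_t,u\rangle=a_t/D\quad(t\in T).
\end{equation*}
The matrix has integer entries bounded by $H$ and nonzero determinant. By Cramer's rule $u$ has height $H^{O(k)}D=J^{O(L)}$. Also $u-(1,\alpha)$ solves the same system with right-hand side of size at most $HN^{-1/4}+D^{-1}$, so $|u-(1,\alpha)|\ll H^{k}(HN^{-1/4}+D^{-1})\to0$. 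Hence $u_0\to1$; in particular $u_0\neq0$. Set $\gamma_i=u_i/u_0\in\mathcal{Q}(J^{O(L)})$, using the height arithmetic remark; then $\gamma_i\to\alpha_i$, which is property 2.

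For property 1, the vector $w=(n_0,n_1,\dots,n_k)$ from the first step lies in $S\subset V$, so $\langle w,u\rangle=0$. That is, $n_0+\sum_i\gamma_in_i=0$, and dividing by $Q$ gives $\sum_{ij}\gamma_im_{ij}=\sum_i\gamma_ix_i=c\in\mathbb{Z}$.

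The main obstacle is making $\gamma$ independent of the primes $p_{ij}$ and of the particular tuple while keeping its height $J^{O(L)}$. This is why I pass through the single set $S$ of all bounded near-relations rather than handling each tuple separately. The key quantitative point is the determinant argument showing that $\dim V\le k$ and that $u$ stays close to $(1,\alpha)$. This is where $H=N^{o(1)}$ is used, to beat the $N^{-1/4}$ losses against the polynomial-in-$H$ factors from Cramer's rule. The irrationality of $\alpha_i/\alpha_j$ is not needed for this lemma.
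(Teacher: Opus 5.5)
Your proof is correct and follows essentially the paper's route: take the span of all bounded-height near-integer relations among $(\alpha_1,\dots,\alpha_k,1)$, produce a rational vector of height $J^{O(L)}$ that annihilates this span and lies within $o(1)$ of $(\alpha_1,\dots,\alpha_k,1)$, and normalise. The differences are only bookkeeping. You clear denominators via $Q=\operatorname{lcm}(p_{ij})\le R^{\ell}$, so everything reduces to integer matrices with explicit Cramer's-rule bounds, and you make explicit that the span has dimension at most $k$, which the paper leaves implicit. The paper instead works with rational vectors of bounded height and splits $(\alpha_1,\dots,\alpha_k,1)$ along a kernel/complement basis before rationally approximating the kernel coefficients.
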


This lemma essentially follows from the fact that all linear algebraic operations concerning a matrix of absolutely bounded dimension involve only an absolutely bounded number of field operations. 

\begin{proof}
    In this proof, the implied constants are absolute and may change from line to line. Let
    $$B = \left\{(m_1, \dots, m_k, m)\middle| m_i\in \mathcal{Q}\left(J^{2L}\right), m\in \mathbb{Z}\text{ and }\left|\sum_i\alpha_im_i + m\right|\leq N^{-1/4}\right\}\subseteq \mathbb{Q}^{k + 1}.$$
    Let $m^{(1)}, \dots, m^{(r)}\in B$ be a basis of $\operatorname{span}(B)$. Note that $r\leq k + 1$. Let $M$ be the $r\times (k + 1)$ matrix with $m^{(1)}, \dots, m^{(r)}$ as row vectors. Let $u = (\alpha_1, \dots, \alpha_k, 1)$. By definition, $\|M\cdot u\|_\infty\leq N^{-1/4}$. 
    
    Viewing $M$ as a linear map, we may find a basis $v_1, \dots, v_{k + 1}$ of $\mathbb{Q}^{k + 1}$ such that $M\cdot v_1, \dots, M\cdot v_r$ form a basis of $\operatorname{im} M$ and $v_{r + 1}, \dots, v_{k + 1}$ form a basis of $\ker M$. This basis may be computed using an absolutely bounded number of field operations. Therefore, we may assume that the entries of $v_i$ are in $\mathcal{Q}\left(J^{C_1L}\right)$, for some absolute constant $C_1$. Decompose $u$ as $u = \sum_{i = 1}^{k + 1}c_i v_i$ for rational coefficients $c_i$. 
    
    As the entries of $M$ and $v_1, \dots, v_r$ are in $\mathcal{Q}\left(J^{O(L)}\right)$, the entries of $M\cdot v_1, \dots, M\cdot v_r$ are also in $\mathcal{Q}\left(J^{O(L)}\right)$. Using an absolutely bounded number of field operations, we may find a dual basis $v_1', \dots, v_r'$ to the basis $M\cdot v_1, \dots, M\cdot v_r$ of $\operatorname{im} M$. Therefore, the entries of $v_1', \dots, v_r'$ are in $\mathcal{Q}\left(J^{O(L)}\right)$. Since $M\cdot u = \sum_{i = 1}^r c_i M\cdot v_i\in \operatorname{im} M$, we have $c_i = v_i'^T\cdot M\cdot u$ for $i \in [r]$. Therefore, $|c_i|\leq O\left(\|v_i'\|_\infty \|M\cdot u\|_\infty\right)\leq O\left(J^{O(L)}N^{-1/4}\right)$ for $i\in [r]$. 
    
    By similarly taking a dual basis for $v_1, \dots, v_{k + 1}$, we may show, for $i = r + 1, \dots, k + 1$, that $|c_i|\leq J^{C_2L}$ for some absolute constant $C_2$. Taking $C_3 = \max(C_1, C_2) + 1$, we may take $c_{r + 1}', \dots, c_{k + 1}'\in \mathcal{Q}\left(J^{C_3L}\right)$ such that $|c_i' - c_i|\leq J^{-C_3L}$. Let $u' = \sum_{i = r + 1}^{k + 1} c_i' v_i$. Then $u'$ has entries in $\mathcal{Q}\left(J^{O(L)}\right)$ and 
    $$\|u' - u\|_\infty \ll \left(\max_{i\leq r}|c_i| + \max_{i > r}|c_i' - c_i|\right)\max \|v_i\|_\infty\ll J^{C_1L - C_3L} = o(1).$$
    Take $\gamma_1, \dots, \gamma_k$ so that $u'':=(\gamma_1, \dots, \gamma_k, 1)$ is parallel to $u'$. We have $\gamma_i\in \mathcal{Q}\left(J^{O(L)}\right)$ and $\|(\gamma_1, \dots, \gamma_k, 1) - u\|_\infty = o(1)$. That is, $\gamma_i\rightarrow \alpha_i$ as $N\rightarrow\infty$. Since $u''\parallel u'\in\ker M$, $u''\cdot m^{(1)} =\dots = u''\cdot m^{(r)} = 0$. As $m^{(1)}, \cdots, m^{(r)}$ form a basis of $\operatorname{span}(B)$, $u''\cdot v = 0$ for any $v\in B$. 
    
    If $\ell\leq L$, take any $m_{ij}\in p_{ij}^{-1}\mathbb{Z}\cap(-J, J)$ such that $\|\sum_{ij}\alpha_im_{ij} \|_{\mathbb{Z}}\leq N^{-1/4}$. Let $m_i = \sum_j m_{ij}\in \mathcal{Q}\left(2^{\ell_i - 1}J^{\ell_i}\right)\subseteq \mathcal{Q}\left(J^{2L}\right)$. Therefore, $(m_1, \dots, m_k, m)\in B$ for some $m\in \mathbb{Z}$. As a result, $u''\cdot (m_1, \dots, m_k, m) = 0$ and $\sum_{ij}\gamma_im_{ij} = \sum_i \gamma_i m_i = -m\in\mathbb{Z}$. 
\end{proof}

\begin{df} \label{params}
    Fix $R, J, L$ as functions of $N$ such that
    \begin{equation*}
        1\lll L\lll \frac{\log N}{\log J}, \frac{\log J}{\log R} \lll \log\log\log N. 
    \end{equation*}
\end{df}

Note that $\gamma_i$ depends only on $N$, and in particular not on the primes $p_{ij}$ we consider. We assume that $\gamma_1, \dots, \gamma_k > 0$, as would be the case for sufficiently large $N$. 

\begin{df}
    Let $\mathcal{B}'_N$ be the union of the sets of prime factors of the numerators and denominators of $\gamma_1, \dots, \gamma_k$. Let $\mathcal{B}_N$ denote the union of $\mathcal{B}_N'$ with the set of primes smaller or equal to $\log N$. 
\end{df}

\begin{df}
    Let $\mathcal{P}_N$ denote the set of primes $p\leq R$ not in $\mathcal{B}_N$. Let 
    \begin{equation*}
        \omega_N'(n) := \sum_{p\in \mathcal{P}_N}1_{p\mid n}
    \end{equation*}
    approximate $\omega$. Let
    \begin{equation*}
        \widetilde{\omega_N'}(n) := \omega_N'(n) - \sum_{p\in \mathcal{P}_N} p^{-1} = \sum_{p\in \mathcal{P}_N}(1_{p\mid n} - p^{-1})
    \end{equation*}
    approximate $\omega - \log\log N$. 
\end{df}

Note that $|\mathcal{B}_N|\ll L\log J + \log N\ll \log N$. In particular, by Mertens' estimate
\begin{equation} \label{bad primes estimate}
    \sum_{p\in \mathcal{B}_N}\frac{1}{p}\leq \sum_{\text{the first }|\mathcal{B}_N|\text{ primes } p}\frac{1}{p}\ll \log\log |\mathcal{B}_N|\ll \log\log\log N. 
\end{equation}
Therefore, again by Mertens' estimate, 
\begin{equation} \label{good primes estimate}
    \sum_{p\in \mathcal{P}_N} \frac{1}{p} = \log\log R + O(\log\log\log N) = \log\log N + o(\sqrt{\log\log N}). 
\end{equation}

We now show that on average, for $n\sim U[N]$, $\omega'_N(\floor{\alpha_i n + \beta_i})\leq \omega(\floor{\alpha_i n + \beta_i})$ forms a good approximant of $\omega(\floor{\alpha_i n + \beta_i})$. 

\begin{lem} \label{omega and omega'}
    The random variables
    \begin{equation*}
        \frac{\omega(\floor{\alpha_i n + \beta_i}) - \omega'_N(\floor{\alpha_i n + \beta_i})}{\sqrt{\log\log N}}\rightarrow 0
    \end{equation*}
    in probability. 
\end{lem}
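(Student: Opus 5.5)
Since $\omega'_N(m)\le\omega(m)$ for every positive integer $m$, the difference $D_i(n):=\omega(\floor{\alpha_i n+\beta_i})-\omega'_N(\floor{\alpha_i n+\beta_i})$ is non-negative. It counts the prime divisors of $m=\floor{\alpha_i n+\beta_i}$ that either exceed $R$ or lie in $\mathcal{B}_N$. The plan is to bound $\mathbb{E}_{n\sim U[N]}(D_i(n))$ by $o(\sqrt{\log\log N})$ and then conclude by Markov's inequality, which gives $\PP{D_i(n)\ge\varepsilon\sqrt{\log\log N}}\to0$ for every fixed $\varepsilon>0$. Write $D_i=D_i^{\mathrm{large}}+D_i^{\mathrm{bad}}$, where the first term counts prime divisors $p>R$ and the second counts prime divisors $p\in\mathcal{B}_N$ with $p\le R$.

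\textbf{Large primes.} For $n\le N$ we have $m\le\alpha_i N+|\beta_i|+1\ll N$. A positive integer of this size has at most $\log(\alpha_iN+|\beta_i|+1)/\log R$ prime divisors exceeding $R$, so pointwise
\begin{equation*}
D_i^{\mathrm{large}}(n)\ll\frac{\log N}{\log R}.
\end{equation*}
By Definition~\ref{params}, $\log N/\log J$ and $\log J/\log R$ are both $o(\log\log\log N)$, so their product satisfies $\log N/\log R=o((\log\log\log N)^2)=o(\sqrt{\log\log N})$. This term is therefore negligible deterministically. The few $n$ with $m\le0$ form an $O(1)$ set and can be discarded.

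\textbf{Bad primes.} By linearity,
\begin{equation*}
\mathbb{E}\left(D_i^{\mathrm{bad}}\right)=\sum_{p\in\mathcal{B}_N,\,p\le R}\PP{p\mid\floor{\alpha_i n+\beta_i}}.
\end{equation*}
So I need a uniform bound $\PP{p\mid\floor{\alpha_in+\beta_i}}\ll 1/p+1/N$ for primes $p\le R$; the $1/N$ term is harmless since $|\mathcal{B}_N|\ll\log N$. The condition $p\mid\floor{\alpha_i n+\beta_i}$ is equivalent to $\alpha_in+\beta_i\in\bigcup_{t\in\mathbb{Z}}[pt,pt+1)$. As $n$ runs over $[N]$, the values $\alpha_in+\beta_i$ form an arithmetic progression with gap $\alpha_i$, and the set $[pt,pt+1)$ contains at most $1/\alpha_i+1$ of its terms. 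There are $O(\alpha_iN/p+1)$ relevant values of $t$, so the count is at most $(1/\alpha_i+1)\,O(\alpha_iN/p+1)\ll_{\alpha_i}N/p+1$. Since $p\le R\le N^{o(1)}$, this gives $\PP{p\mid\floor{\alpha_in+\beta_i}}\ll_{\alpha_i}1/p$. This elementary counting needs no equidistribution and is the only place where any care is required, so it is the main obstacle, though a mild one. Summing and applying \eqref{bad primes estimate},
\begin{equation*}
\mathbb{E}\left(D_i^{\mathrm{bad}}\right)\ll\sum_{p\in\mathcal{B}_N}\frac1p\ll\log\log\log N=o(\sqrt{\log\log N}).
\end{equation*}

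Combining the two parts gives $\mathbb{E}(D_i)=o(\sqrt{\log\log N})$, and Markov's inequality completes the proof for each $i\in[k]$.
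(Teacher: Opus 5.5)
Your proposal is correct and follows essentially the same route as the paper: reduce to showing $\mathbb{E}_{n\sim U[N]}(\omega-\omega'_N)=o(\sqrt{\log\log N})$, bound the number of prime divisors exceeding $R$ pointwise by $\log N/\log R$, and bound the contribution of primes in $\mathcal{B}_N$ via $\mathbb{P}(p\mid\floor{\alpha_i n+\beta_i})\ll 1/p+1/N$ together with (\ref{bad primes estimate}). You also supply details the paper leaves implicit, namely the elementary counting behind the divisibility bound and the deduction $\log N/\log R=o((\log\log\log N)^2)=o(\sqrt{\log\log N})$ from Definition~\ref{params}.
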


\begin{proof}
    It suffices to show convergence in expectation. By (\ref{bad primes estimate}), we have
    \begin{align*}
        &\mathbb{E}_{n\sim U[N]}\left(\omega(\floor{\alpha_i n + \beta_i}) - \omega'_N(\floor{\alpha_i n + \beta_i})\right) \\
        &\leq \mathbb{E}_{n\sim U[N]}\left(\sum_{p > R} 1_{p\mid \floor{\alpha_i n + \beta_i}}\right) + \mathbb{E}_{n\sim U[N]}\left(\sum_{p \in \mathcal{B}_N} 1_{p\mid \floor{\alpha_i n + \beta_i}}\right)\\
        &\leq \frac{\log N}{\log R} + \sum_{p\in \mathcal{B}_N}\mathbb{P}_{n\sim U[N]}\left(p\mid\floor{\alpha_i n + \beta_i}\right)\\
        &\ll \frac{\log N}{\log R} + \sum_{p\in \mathcal{B}_N}\left(\frac{1}{p} + \frac{1}{N}\right)\\
        &\ll \frac{\log N}{\log R} + \log\log\log N\\
        &\leq o\left(\sqrt{\log\log N}\right), 
    \end{align*}
    where we use the estimate
    \begin{equation*}
        \mathbb{P}_{n\sim U[N]}\left(p\mid\floor{\alpha_i n + \beta_i}\right)\ll \frac{1}{p} + \frac{1}{N}. \qedhere
    \end{equation*}
\end{proof}

\begin{cor} \label{reduction}
    The random variables
    \begin{equation*}
        \frac{\omega(\floor{\alpha_i n + \beta_i}) - \log\log N}{\sqrt{\log\log N}} - \frac{\widetilde{\omega'_N}(\floor{\alpha_i n + \beta_i})}{\sqrt{\log\log N}}\rightarrow 0
    \end{equation*}
    in probability. 
\end{cor}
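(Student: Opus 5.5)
The plan is to write the difference in Corollary~\ref{reduction} as the sum of two pieces and show each tends to $0$ in probability. Algebraically,
\begin{equation*}
    \frac{\omega(\floor{\alpha_i n + \beta_i}) - \log\log N}{\sqrt{\log\log N}} - \frac{\widetilde{\omega'_N}(\floor{\alpha_i n + \beta_i})}{\sqrt{\log\log N}} = \frac{\omega(\floor{\alpha_i n + \beta_i}) - \omega'_N(\floor{\alpha_i n + \beta_i})}{\sqrt{\log\log N}} + \frac{\sum_{p\in\mathcal{P}_N}p^{-1} - \log\log N}{\sqrt{\log\log N}}.
\end{equation*}
This holds because $\widetilde{\omega'_N}(n) = \omega'_N(n) - \sum_{p\in\mathcal{P}_N}p^{-1}$ by definition.

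For the first term, Lemma~\ref{omega and omega'} already gives convergence to $0$ in probability. The second term is deterministic, and by (\ref{good primes estimate}) it equals $o(\sqrt{\log\log N})/\sqrt{\log\log N} = o(1)$. Justifying (\ref{good primes estimate}) needs Mertens' estimate $\sum_{p\le R}p^{-1} = \log\log R + O(1)$ together with (\ref{bad primes estimate}). It also needs $\log\log R = \log\log N + O(\log\log\log N)$, which follows from Definition~\ref{params}: there $\log N/\log R = (\log N/\log J)(\log J/\log R)$ is at most a power of $\log\log\log N$ under our growth constraints, so $\log\log N - \log\log R = \log(\log N/\log R)$ is $O(\log\log\log N)$ or smaller. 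A deterministic sequence tending to $0$ converges to $0$ in probability.

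Finally, a sum of two sequences converging to $0$ in probability also converges to $0$ in probability. This follows from the union bound $\PP{|X+Y|>\varepsilon}\le\PP{|X|>\varepsilon/2}+\PP{|Y|>\varepsilon/2}$, which completes the argument.

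The only point requiring any care is the claim $\log\log R = \log\log N + o(\sqrt{\log\log N})$ implicit in (\ref{good primes estimate}), which I would check directly from the parameter choices in Definition~\ref{params}. Everything else is routine.
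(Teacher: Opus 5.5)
Your proposal is correct and is essentially the paper's argument. You split the difference into $(\omega - \omega'_N)/\sqrt{\log\log N}$, which Lemma~\ref{omega and omega'} handles, plus the deterministic term $\bigl(\sum_{p\in\mathcal{P}_N}p^{-1} - \log\log N\bigr)/\sqrt{\log\log N} = o(1)$ coming from (\ref{good primes estimate}). Your extra check is also valid: Definition~\ref{params} gives $\log N/\log R \lll (\log\log\log N)^2$, hence $\log\log R = \log\log N + o(\sqrt{\log\log N})$, which makes explicit what the paper leaves implicit.
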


\begin{proof}
    This follows by combining Lemma~\ref{omega and omega'} with (\ref{good primes estimate}). 
\end{proof}

Corollary~\ref{reduction} reduces Theorem~\ref{main} to showing that 
\begin{equation*}
    \left(\frac{\widetilde{\omega'_N}(\floor{\alpha_1 n + \beta_1})}{\sqrt{\log\log N}}, \dots, \frac{\widetilde{\omega'_N}(\floor{\alpha_k n + \beta_k})}{\sqrt{\log\log N}}\right)\rightarrow_d \mathcal{N}(0, I_k)
\end{equation*}
in distribution. In light of Theorem~\ref{moment-det}, it further suffices to show the convergence of mixed moments. In other words, we have reduced Theorem~\ref{main} to showing the following moment estimate. 

\begin{prop} \label{reduced main}
    For any set of non-negative integers $\ell_1, \dots, \ell_k$, we have
    \begin{equation*}
        \mathbb{E}_{n\sim U[N]}\left(\prod_{i = 1}^k \left(\frac{\widetilde{\omega'_N}(\floor{\alpha_i n + \beta_i})}{\sqrt{\log\log N}}\right)^{\ell_i}\right)\rightarrow \mathbb{E}_{Z\sim\mathcal{N}(0, I_k)}(Z_1^{\ell_1}\cdots Z_k^{\ell_k})
    \end{equation*}
    as $N\rightarrow \infty$. 
\end{prop}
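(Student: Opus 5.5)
Expanding the product, the mixed moment equals $(\log\log N)^{-\ell/2}$ times the sum over tuples of primes $p_{ij}\in\mathcal{P}_N$ ($i\in[k]$, $j\in[\ell_i]$) of
\begin{equation*}
    E(\mathbf{p}) := \mathbb{E}_{n\sim U[N]}\left(\prod_{i=1}^k\prod_{j=1}^{\ell_i}\left(1_{p_{ij}\mid\floor{\alpha_i n+\beta_i}}-p_{ij}^{-1}\right)\right).
\end{equation*}
The plan is to compare $E(\mathbf{p})$ with the model quantity $E^*(\mathbf{p})$ in which the $k$ rows behave independently and each row behaves as for a uniformly random integer. Thus $E^*(\mathbf{p})=\prod_i\mathbb{E}\prod_j(X_{p_{ij}}-p_{ij}^{-1})$, where the $X_p$ are independent Bernoulli$(1/p)$ variables. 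Granville--Soundararajan bookkeeping then shows that $(\log\log N)^{-\ell/2}\sum_{\mathbf{p}}E^*(\mathbf{p})$ factorises as a product over $i$ of the one-dimensional moments. Each of these tends to the Gaussian moment $(\ell_i-1)!!$ or $0$, since $\sum_{p\in\mathcal{P}_N}\frac1p(1-\frac1p)=\log\log N+o(\sqrt{\log\log N})$ by (\ref{good primes estimate}). Only the perfect matchings within each row contribute at leading order. It therefore suffices to show that $\sum_{\mathbf{p}}|E(\mathbf{p})-E^*(\mathbf{p})|=o((\log\log N)^{\ell/2})$. Since there are at most $R^{\ell}$ tuples, this follows if $E(\mathbf{p})-E^*(\mathbf{p})\ll R^{-\ell}(\log\log N)^{-1}$ uniformly, up to an additive error of size $o(\prod p_{ij}^{-1})$ summed suitably.

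To estimate $E(\mathbf{p})$, I would smooth each indicator. One writes $1_{p\mid\floor{\alpha x+\beta}}$ as $1_{\{(\alpha x+\beta)/p\}\in[0,1/p)}$ and sandwiches it between smooth $1$-periodic functions of $(\alpha x+\beta)/p$ whose Fourier coefficients decay rapidly beyond frequency $J$, with $L^1$ error $\delta\approx J^{-1/2}$. The signs of the factors $1-p^{-1}$ and $-p^{-1}$ prevent a direct sandwich of the product, so I would expand the product as a sum over subsets $S$ of the positions of terms $\pm\prod p^{-1}\cdot\prod_{S}1_{p\mid\cdots}$ and sandwich each product of indicators separately. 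These are the two approximants (upper and lower) mentioned in the outline. Inserting Fourier expansions, each smoothed product becomes a sum over $m_{ij}\in p_{ij}^{-1}\mathbb{Z}\cap(-J,J)$ (the tails are negligible) of coefficients times $\mathbb{E}_n e(\sum_{ij}m_{ij}(\alpha_i n+\beta_i))$. The exponential sum is $\ll \min(1,N^{-1}\|\sum\alpha_i m_{ij}\|_{\mathbb{Z}}^{-1})$, which is negligible unless $\|\sum\alpha_im_{ij}\|_{\mathbb{Z}}\le N^{-1/4}$. By Lemma~\ref{key lin alg} with $L\ge\ell$, the surviving tuples satisfy $\sum_{ij}\gamma_im_{ij}\in\mathbb{Z}$.

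The key step, and the main obstacle, is to show that these surviving frequencies are exactly the row-wise vanishing ones, i.e.\ $\sum_j m_{ij}\in\mathbb{Z}$ for each $i$. For those, the exponential sum is $1+o(1)$, and summing gives $E^*(\mathbf{p})$ up to the smoothing error. Here the exclusion of $\mathcal{B}_N$ enters. Write $\gamma_i=a_i/b$ with all prime factors of $a_i,b$ in $\mathcal{B}_N'$. Suppose $\sum_{ij}\gamma_im_{ij}\in\mathbb{Z}$ with $m_{ij}=c_{ij}/p_{ij}$. Group by prime $q$: multiplying by $b$ and looking $q$-adically, coprimality of $q$ with $a_ib$ forces $\sum_i a_i\sum_{j:p_{ij}=q}c_{ij}\equiv0\pmod q$. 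This is a single congruence rather than row-wise vanishing, so the genuine cross-row frequencies $(m_{ij})$ with $\sum_i\gamma_i m_i\in\mathbb{Z}$ but $m_i\notin\mathbb{Z}$ must be shown to contribute $o(1)$ relative to the main term. I would first try showing that they force $\|\sum\alpha_i m_i\|$ to be only $o(1)$-close to an integer along a set of density comparable to the true joint density of $q\mid\floor{\alpha_i n+\beta_i}$ for several $i$. That density is $\prod q^{-1}$ by equidistribution of $(\alpha_i n)$ modulo $q$, since $\alpha_i/\alpha_j\notin\mathbb{Q}$, so the cross terms cancel in the comparison with $E^*$. Making this uniform in $q\le R$ via the height bounds $J^{O(L)}$ and the choice of parameters in Definition~\ref{params} is the step I expect to be delicate. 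Combining all of this with the smoothing error $O(\delta)$ per term, which is harmless since $R^\ell\delta(\log\log N)^{-\ell/2}\to0$ when $\log J\gg\ell\log R$, completes the proof.
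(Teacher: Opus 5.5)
Your overall framework (compare $E(\mathbf p)$ with the independent model $E^*(\mathbf p)$, and use Lemma~\ref{key lin alg} to keep only frequencies with $\sum_{ij}\gamma_i m_{ij}\in\mathbb{Z}$) is compatible with the paper. The genuine gap is the smoothing step, which fails. You claim that sandwiching $1_{[0,1)}$ between continuous upper and lower approximants of $L^1$ error $\delta$ costs only $O(\delta)$ per term. That would require $\alpha_i n+\beta_i$ to be equidistributed modulo $1$ at scale $\delta$ around $0$, and this is exactly what is unavailable without Diophantine assumptions. A continuous majorant of $1_{[0,1)}$ is $\ge 1$ at both $0$ and $1$, and a continuous minorant is $\le 0$ at both, so the error is two-sided at every integer. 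Take $k=1$, $\alpha_1=1$, $\beta_1=0$, which the hypotheses permit. The sandwich then gives only $|\mathbb{E}_n(1_{p\mid n}-p^{-1})|\lesssim p^{-1}$, so even the first moment $(\log\log N)^{-1/2}\sum_{p\in\mathcal{P}_N}E(p)$ is bounded only by $O(\sqrt{\log\log N})$. On the Fourier side, your approximants have nonzero coefficients at the resonant frequencies $M\in\mathbb{Z}\setminus\{0\}$ with $\gamma_iM\in\mathbb{Z}$. These terms carry phases $e(\beta_iM)$, so frequencies with $\sum_j m_{ij}=M\neq 0$ do not have exponential sum $1+o(1)$. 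They can add up to order $1/(bp)$ whenever $\alpha_i$ is extremely close to some $a/b$ with small $b$ and $\beta_i$ is suitably placed. For Liouville-type irrational $\alpha_i$ this happens at infinitely many scales $N$, so it is not just a rational special case.

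The paper avoids this with $\chi_+=\epsilon^{-1}1_{[0,1]}*1_{[0,\epsilon]}$ and $\chi_-=\epsilon^{-1}1_{[0,1]}*1_{[-\epsilon,0]}$. These are neither majorants nor minorants. Their Fourier transforms vanish on $\mathbb{Z}\setminus\{0\}$, and they err only when $\{x\}\in[0,\epsilon)$, respectively $(-\epsilon,0)$. The paper splits $[N]$ into $N^{1/2}$ blocks and uses the major/minor arc analysis of Lemmas~\ref{major arcs} and~\ref{minor arcs}. This shows that on all but $O(\epsilon^{1/4}N^{1/2})$ blocks each $\alpha_in+\beta_i$ essentially avoids one side, and the sign is chosen blockwise. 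Every surviving frequency then lies in $p_{ij}^{-1}\mathbb{Z}\setminus\mathbb{Z}$. A prime occurring only once therefore leaves a factor $p$ in the denominator of $\sum\gamma_im_{ij}$, which makes singleton (type C) tuples $O(J^{-1/8})$. Tuples paired within rows (type A) are reduced to these via $(1_p-p^{-1})^2=(1-2p^{-1})(1_p-p^{-1})+p^{-1}-p^{-2}$.

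For the cross-row frequencies you correctly locate the obstacle but offer only a heuristic. Equidistribution of $(\alpha_in)$ modulo $q$, uniformly in $q\le R$, restates what must be proved and is not available for arbitrary irrational ratios. The missing idea is that for such tuples $E^*=0$, so only an upper bound $o(\prod'q^{-1})$ over distinct primes is needed. You may therefore use a crude majorant with nonnegative Fourier transform, $\chi=1_{[-1,1]}*1_{[-1,1]}$, pass to absolute values, and exploit heights. Suppose $q$ occurs in rows $i_1\ne i_2$ and $\gamma_{i_1}/\gamma_{i_2}=g_1/g_2$ in lowest terms. Since $q$ is coprime to every numerator and denominator of the $\gamma_i$, the constraint becomes $g_1m_1+g_2m_2\in g_4\mathbb{Z}+r'$. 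The $\phi$-weighted number of solutions is $\ll q/g_1+1$. Because $\gamma_{i_1}/\gamma_{i_2}\to\alpha_{i_1}/\alpha_{i_2}\notin\mathbb{Q}$ forces $g_1\to\infty$, and $q\ge\log N$, this is $o(q)$ (Lemma~\ref{type B lem}); it is the only place the irrationality hypothesis enters. The same positivity argument gives $|E|\ll\prod'p^{-1}$ for tuples with a prime of multiplicity at least $3$ and no singletons. Their total is then $O((\log\log N)^{(\ell-1)/2})$ by counting. Your stated reduction, an error $o(\prod p_{ij}^{-1})$ ``summed suitably'', does not suffice as written, since $\sum_{\mathbf p}\prod_{ij}p_{ij}^{-1}\asymp(\log\log N)^{\ell}$.
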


Sections~\ref{moment decomp sec}, \ref{type B and D}, \ref{type A and C} and~\ref{assembly} are dedicated to the proof of Proposition~\ref{reduced main}, which completes the proof of Theorem~\ref{main}. We shall now treat $\ell := \ell_1 + \dots + \ell_k$ as an absolute constant, and assume that $N$ is sufficiently large with respect to $\ell$, so that $L\geq \ell$.

\section{Decomposing the moment} \label{moment decomp sec}

Using
\begin{equation*}
    \widetilde{\omega'_N}(n) = \sum_{p\in \mathcal{P}_N} \left(1_{p\mid n} - \frac{1}{p}\right), 
\end{equation*}
we decompose the mixed moment
\begin{equation*}
    \mathbb{E}_{n\sim U[N]}\left(\prod_{i = 1}^k \left(\frac{\widetilde{\omega'_N}(\floor{\alpha_i n + \beta_i})}{\sqrt{\log\log N}}\right)^{\ell_i}\right)
\end{equation*}
as
\begin{equation*}
    \sum_{p_{ij}\in \mathcal{P}_N}\mathbb{E}_{n\sim U[N]}\left(\prod_{i = 1}^k\prod_{j = 1}^{\ell_i} \frac{1_{p_{ij}\mid \floor{\alpha_i n + \beta_i}} - p_{ij}^{-1}}{\sqrt{\log\log N}}\right), 
\end{equation*}
where we sum over tuples $(p_{ij})$ of primes $p_{ij}$ ($i\in [k]$ and $j\in [\ell_i]$). We use $\prod_{ij}$ to denote this product over $\prod_{i = 1}^k\prod_{j = 1}^{\ell_i}$. We now study the summand for each tuple $(p_{ij})$ of primes $p_{ij}\in \mathcal{P}_N$. 

\begin{df}
    Let 
    \begin{equation*}
        E = E(p_{ij}):= \mathbb{E}_{n\sim U[N]}\left(\prod_{ij} (1_{p_{ij}\mid \floor{\alpha_i n + \beta_i}} - p_{ij}^{-1})\right). 
    \end{equation*}
\end{df}

In this language, we have
\begin{equation} \label{moment decomposition}
    \mathbb{E}_{n\sim U[N]}\left(\prod_{i = 1}^k \left(\frac{\widetilde{\omega'_N}(\floor{\alpha_i n + \beta_i})}{\sqrt{\log\log N}}\right)^{\ell_i}\right) = \frac{1}{(\log\log N)^{\ell/2}}\sum_{p_{ij}\in \mathcal{P}_N}E(p_{ij}). 
\end{equation}
To study the contribution of each tuple $(p_{ij})$ to this sum, we categorise them into four types. 
\begin{df}
    We call a tuple $(p_{ij})$ of primes $p_{ij}\in \mathcal{P}_N$ ($i\in [k]$ and $j\in [\ell_i]$), 
    \begin{enumerate}
        \item type A if the $p_{ij}$s come distinct in pairs, and furthermore if $p_{ij} = p_{i'j'}$, then $i = i'$, 
        \item type B if the $p_{ij}$s come distinct in pairs, but there exists $i\neq i'$ such that $p_{ij} = p_{i'j'}$, 
        \item type C if some prime occurs only once, 
        \item type D, otherwise.
    \end{enumerate}
\end{df}

The main contribution to 
\begin{equation*}
    \frac{1}{(\log\log N)^{\ell/2}}\sum_{p_{ij}\in \mathcal{P}_N}E(p_{ij}). 
\end{equation*}
comes from type A tuples. We seek to estimate
\begin{equation*}
    \sum_{\text{type A }(p_{ij})}E(p_{ij}), 
\end{equation*}
and to upper bound
\begin{equation*}
    \sum_{\text{type B, C, D }(p_{ij})}|E(p_{ij})|. 
\end{equation*}

Roughly speaking, type B terms are suppressed by the irrationality of $\alpha_i/\alpha_{i'}$ relative to type A terms. Each type D term may be on the same order of magnitude as type A terms, but there are much fewer type D terms. Type C terms are negligible. 

We will directly upper bound $|E(p_{ij})|$ for type B, C, and D tuples, and relate $E(p_{ij})$ for type A tuples to a suitable type C terms. 

For type B, C, and D tuples, we shall estimate $|E|$ by taking a suitable Fourier transform. However, this approach is impeded by the discontinuity of 
\begin{equation*}
    1_{p_{ij}\mid \floor{\alpha_i n + \beta_i}} - p_{ij}^{-1}. 
\end{equation*}
We shall therefore replace this function with a suitable continuous variant before taking the Fourier transform. As it turns out, two different choices of this continuous replacement are needed. In both cases, it suffices to give a continuous approximant of $1_{[0, 1)}$, as we may decompose
\begin{equation*}
    1_{p\mid \floor{x}} = \sum_{m\in \mathbb{Z}}1_{[0, 1)}(x - pm), 
\end{equation*}
in terms of indicator functions of unit intervals. For type B and D tuples we make use of the bound
\begin{equation*}
    \left|1_{p\mid \floor{x}} - p^{-1}\right|\leq 1_{p\mid \floor{x}} + p^{-1} = \sum_{m\in \mathbb{Z}}1_{[0, 1)}(x - pm) + p^{-1}\leq \sum_{m\in \mathbb{Z}}1_{[-1, 1]}*1_{[-1, 1]}(x - pm) + p^{-1}. 
\end{equation*}
This corresponds to considering the crude majorant $1_{[-1, 1]}*1_{[-1, 1]}$ of $1_{[0, 1)}$. We shall treat the case of type B and D tuples in Section~\ref{type B and D}. 

For type C tuples, we instead approximate $1_{[0, 1)}$ by either $\epsilon^{-1}1_{[0, 1]}*1_{[0, \epsilon]}$ or $\epsilon^{-1}1_{[0, 1]}*1_{[-\epsilon, 0]}$ for a suitably chosen $\epsilon = \epsilon(N)> 0$. Note that neither forms a majorant or a minorant of $1_{[0, 1)}$. It will be important that the Fourier transform of the approximant we use vanishes on $\mathbb{Z}^{\neq 0}$. As such, we are unable to use tight majorants or minorants of $1_{[0, 1)}$ in this setting. The treatment in this case is more intricate and will be given in Section~\ref{type A and C}. 

Suitable type D estimates may in fact be derived alternatively from the type C estimates developed in Section~\ref{type A and C}, but type B estimates rely on the framework of Section~\ref{type B and D}. 

\section{Type B and D tuples} \label{type B and D}

\subsection{Smoothing the indicator} \label{indicator approximation 1}

\begin{df}
    Let $\chi := 1_{[-1, 1]}*1_{[-1, 1]}$ be a continuous upper bound of $1_{[0, 1)}$. Let 
    \begin{equation*}
        \chi_p(x) := \sum_{m\in \mathbb{Z}}\chi(x - pm), 
    \end{equation*}
    be the corresponding continuous upper bound of $1_{p\mid \floor{x}}$. 
\end{df}

Note that
\begin{equation*}
    \left|1_{p\mid \floor{x}} - p^{-1}\right|\leq \chi_p(x) + p^{-1}. 
\end{equation*}
Therefore, we have
\begin{equation} \label{BD E bound}
    |E|\leq \mathbb{E}_{n\sim U[N]}\left(\prod_{ij} (\chi_{p_{ij}}(\alpha_i n + \beta_i) + p_{ij}^{-1})\right). 
\end{equation}

\subsection{The Fourier transform} \label{BD FT}

As outlined in Section~\ref{proof sketch}, we shall evaluate the right-hand side of (\ref{BD E bound}) via a Fourier transform. 
\begin{df}
    Let $e(x)$ denote $e^{2\pi i x}$. Let
    \begin{equation*}
        \phi(y) := \widehat{\chi}(y) = \int \chi(x) e(-xy)dx \geq 0, 
    \end{equation*}
    and let 
    \begin{equation*}
        \theta(x) := N^{-1}\sum_{n = 1}^N e(nx). 
    \end{equation*}
\end{df}

Note that 
\begin{equation} \label{Fourier l infty estimate BD}
    \phi(m) \ll\min(1, m^{-1})^2, 
\end{equation}
and therefore, 
\begin{align} 
    \sum_{m\in p^{-1}\mathbb{Z}}\frac{1}{p}\phi(m)&\ll 1,\label{Fourier l1 estimate all BD} \\
    \sum_{m\in p^{-1}\mathbb{Z}, |m|\geq J}\frac{1}{p}\phi(m)&\ll J^{-1}. \label{Fourier l1 estimate tail BD}
\end{align}

Note that the restriction of $\frac{1}{p}\phi$ on $p^{-1}\mathbb{Z}$ forms the Fourier coefficients of the $p$-periodisation $\chi_p$ of $\chi$. Since 
\begin{equation*}
    \sum_{m\in p^{-1}\mathbb{Z}}\frac{1}{p}|\phi(m)| < \infty
\end{equation*}
is absolutely convergent, we have the Fourier inversion formula
\begin{equation} \label{Fourier inversion BD}
    \sum_{m\in p^{-1}\mathbb{Z}}\frac{\phi(m)}{p}e(mx) = \chi_p(x). 
\end{equation}

\begin{lem}
    We have
    \begin{align*}
        &\mathbb{E}_{n\sim U[N]}\left(\prod_{ij} (\chi_{p_{ij}}(\alpha_i n + \beta_i) + p_{ij}^{-1})\right)\\
        &= \sum_{(m_{ij})\in \prod (p_{ij}^{-1}\mathbb{Z})}\theta\left(\sum_{ij} \alpha_im_{ij}\right)\prod_{ij}\frac{(\phi(m_{ij}) + \delta_0(m_{ij}))e\left(\beta_im_{ij}\right)}{p_{ij}}, 
    \end{align*}
    where $\sum_{(m_{ij})\in \prod (p_{ij}^{-1}\mathbb{Z})}$ denotes the sum over all choices of the tuple $(m_{ij})_{ij}$ with $m_{ij}\in p_{ij}^{-1}\mathbb{Z}$. 
\end{lem}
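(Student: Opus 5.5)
The plan is to expand each factor by Fourier inversion, multiply out, and then average over $n$; the only delicate point is justifying the exchange of sums.

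\textbf{Step 1: expand a single factor.} For each pair $(i,j)$, use the Fourier inversion formula (\ref{Fourier inversion BD}) with $x = \alpha_i n + \beta_i$. The constant $p_{ij}^{-1}$ is the term $\frac{\delta_0(m)}{p_{ij}} e(mx)$ at $m = 0$. This gives
\begin{equation*}
    \chi_{p_{ij}}(\alpha_i n + \beta_i) + p_{ij}^{-1} = \sum_{m_{ij}\in p_{ij}^{-1}\mathbb{Z}}\frac{(\phi(m_{ij}) + \delta_0(m_{ij}))\,e(\beta_i m_{ij})}{p_{ij}}\, e(\alpha_i m_{ij} n).
\end{equation*}
By (\ref{Fourier l1 estimate all BD}), and since $\phi \geq 0$, this series converges absolutely. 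Its absolute sum is bounded by $O(1) + p_{ij}^{-1}$, uniformly in $n$.

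\textbf{Step 2: multiply out the product.} The product over $(i,j)$ has finitely many factors, $\ell$ of them. Each is an absolutely convergent series, so the product equals the sum over all tuples $(m_{ij}) \in \prod p_{ij}^{-1}\mathbb{Z}$ of the products of the coefficients, times
\begin{equation*}
    \prod_{ij} e(\alpha_i m_{ij} n) = e\Big(n\sum_{ij}\alpha_i m_{ij}\Big).
\end{equation*}
The resulting multiple series converges absolutely, with absolute sum at most $\prod_{ij}(O(1) + p_{ij}^{-1})$, and this bound does not depend on $n$.

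\textbf{Step 3: average over $n$.} Take $\mathbb{E}_{n\sim U[N]}$, i.e.\ $N^{-1}\sum_{n=1}^N$, which is a finite sum. Interchanging a finite sum with an absolutely convergent multiple series is legitimate by Fubini, or simply by linearity applied to absolutely convergent series. The inner average is
\begin{equation*}
    N^{-1}\sum_{n=1}^N e\Big(n\sum_{ij}\alpha_i m_{ij}\Big) = \theta\Big(\sum_{ij}\alpha_i m_{ij}\Big),
\end{equation*}
which gives the stated identity.

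The only real obstacle is the absolute convergence needed in Steps 2 and 3, and it is settled by $\phi \geq 0$ together with (\ref{Fourier l1 estimate all BD}). One small check remains. Inversion (\ref{Fourier inversion BD}) holds pointwise rather than only almost everywhere, because $\chi_p$ is continuous and its Fourier series converges absolutely. This is what allows us to evaluate it at the specific points $\alpha_i n + \beta_i$.
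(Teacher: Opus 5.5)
Your proof is correct and is essentially the paper's argument. The paper runs the same computation in the opposite direction: it starts from the right-hand side, writes $\theta$ as an average over $n$, interchanges the sums by Fubini, factorises over $ij$, and applies (\ref{Fourier inversion BD}), using the same absolute convergence you get from $\phi\geq 0$ and (\ref{Fourier l1 estimate all BD}). Your remark that inversion holds pointwise, because $\chi_p$ is continuous with absolutely summable coefficients, is a detail the paper leaves implicit.
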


\begin{proof}
    Starting on the right-hand side, we have, by Fubini's theorem and (\ref{Fourier inversion BD}), 
    \begin{align*}
        &\sum_{(m_{ij})\in \prod (p_{ij}^{-1}\mathbb{Z})}\theta\left(\sum_{ij} \alpha_im_{ij}\right)\prod_{ij}\frac{(\phi(m_{ij}) + \delta_0(m_{ij}))e\left(\beta_im_{ij}\right)}{p_{ij}}\\
        &= N^{-1}\sum_{(m_{ij})\in \prod (p_{ij}^{-1}\mathbb{Z})}\sum_{n = 1}^N e\left(n\sum_{ij} \alpha_im_{ij}\right)\prod_{ij}\frac{(\phi(m_{ij}) + \delta_0(m_{ij}))e\left(\beta_im_{ij}\right)}{p_{ij}}\\
        &= N^{-1}\sum_{n = 1}^N \prod_{ij}\sum_{m_{ij}\in p_{ij}^{-1}\mathbb{Z}}\frac{\phi(m_{ij}) + \delta_0(m_{ij})}{p_{ij}}e\left((\alpha_in + \beta_i)m_{ij}\right)\\
        &= N^{-1}\sum_{n = 1}^N \prod_{ij}(\chi_{p_{ij}}(\alpha_i n + \beta_i) + p_{ij}^{-1})\\
        &= \mathbb{E}_{n\sim U[N]}\left(\prod_{ij} (\chi_{p_{ij}}(\alpha_i n + \beta_i) + p_{ij}^{-1})\right). \qedhere
    \end{align*}
\end{proof}

Noting that $\phi(0) = 4$, we have
\begin{equation*}
    \phi + \delta_0\ll \phi. 
\end{equation*}
Note also that
\begin{equation*}
    |\theta(x)|\ll \frac{1}{\max(N\|x\|_\mathbb{Z}, 1)}. 
\end{equation*}
Combining these observations with (\ref{BD E bound}), we arrive at the following corollary of the preceding lemma. 

\begin{cor} \label{BD E cor}
    \begin{equation*}
        |E|\ll \sum_{(m_{ij})\in \prod (p_{ij}^{-1}\mathbb{Z})}\frac{1}{\max(N\|\sum_{ij} \alpha_im_{ij}\|_\mathbb{Z}, 1)}\prod_{ij}\frac{\phi(m_{ij})}{p_{ij}}. 
    \end{equation*}
\end{cor}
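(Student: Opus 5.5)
The corollary follows by combining three inputs: the inequality (\ref{BD E bound}), the Fourier expansion lemma just proved, and pointwise bounds on $\theta$ and on $\phi + \delta_0$. The plan is to take absolute values termwise in the identity of the preceding lemma and bound each factor.

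First, by (\ref{BD E bound}), $|E|$ is at most the expectation $\mathbb{E}_{n\sim U[N]}\bigl(\prod_{ij}(\chi_{p_{ij}}(\alpha_i n+\beta_i)+p_{ij}^{-1})\bigr)$. This expectation is a nonnegative real number. By the lemma it equals the sum over tuples $(m_{ij})\in\prod(p_{ij}^{-1}\mathbb{Z})$. Hence it is bounded by the sum of the absolute values of the summands. Since $|e(\beta_i m_{ij})|=1$, each summand has absolute value $|\theta(\sum_{ij}\alpha_i m_{ij})|\prod_{ij}(\phi(m_{ij})+\delta_0(m_{ij}))/p_{ij}$, using $\phi\ge 0$.

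Second, I bound $\theta$ pointwise. Trivially $|\theta(x)|\le 1$. Summing the geometric series gives $|\theta(x)|\le N^{-1}|\sin(\pi x)|^{-1}\le (2N\|x\|_{\mathbb{Z}})^{-1}$. Together these give $|\theta(x)|\ll 1/\max(N\|x\|_{\mathbb{Z}},1)$.

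Third, I need $\phi+\delta_0\ll\phi$ on $p^{-1}\mathbb{Z}$. Away from $0$ the two sides agree. At $0$ we have $\phi(0)=\int\chi = 2\cdot 2=4$, so $\phi(0)+1=5\le \tfrac54\phi(0)$. This gives a constant factor of at most $(5/4)^{\ell}$, which is absolute since $\ell$ is treated as an absolute constant.

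Multiplying these bounds term by term yields the claimed inequality. No step is a real obstacle; the only point needing care is that the bound $\phi+\delta_0\ll\phi$ depends on $\phi(0)>0$.
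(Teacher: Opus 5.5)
Your proposal is correct and follows the paper's argument exactly: bound $|E|$ by (\ref{BD E bound}), expand with the preceding Fourier lemma, and use $|\theta(x)|\ll 1/\max(N\|x\|_{\mathbb{Z}},1)$ together with $\phi+\delta_0\ll\phi$ (from $\phi(0)=4$). Your extra details, namely that $\phi\geq 0$, the explicit $\sin$ bound for $\theta$, and the $(5/4)^{\ell}$ constant, are exactly what the paper leaves implicit.
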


In light of Lemma~\ref{key lin alg}, we now show that it suffices to sum over $m_{ij}$ for which $\sum_{ij}\gamma_im_{ij}\in \mathbb{Z}$. 

\begin{df}
    Let 
    \begin{equation*}
        E' = E'(p_{ij}):= \sum_{\substack{(m_{ij})\in \prod (p_{ij}^{-1}\mathbb{Z})\\ \sum_{ij}m_{ij}\gamma_i\in\mathbb{Z}}}\prod_{ij}\frac{\phi(m_{ij})}{p_{ij}}. 
    \end{equation*}
\end{df}

\begin{lem}
    \begin{equation*}
        |E|\ll E' + J^{-1}. 
    \end{equation*}
\end{lem}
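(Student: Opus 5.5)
Starting from Corollary~\ref{BD E cor}, I split the sum over tuples $(m_{ij})\in\prod(p_{ij}^{-1}\mathbb{Z})$ into three parts. \emph{Tail:} some $|m_{ij}|\geq J$. \emph{Near:} all $|m_{ij}|<J$ and $\|\sum_{ij}\alpha_im_{ij}\|_{\mathbb{Z}}\leq N^{-1/4}$. \emph{Far:} all $|m_{ij}|<J$ and $\|\sum_{ij}\alpha_im_{ij}\|_{\mathbb{Z}}>N^{-1/4}$. Throughout I bound the factor $1/\max(N\|\cdot\|_{\mathbb{Z}},1)$ by $1$ except in the far range.

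For the tail, I fix an index $(i_0,j_0)$ with $|m_{i_0j_0}|\geq J$. Summing over that coordinate with (\ref{Fourier l1 estimate tail BD}) gives $O(J^{-1})$. Summing over each other coordinate with (\ref{Fourier l1 estimate all BD}) gives $O(1)$. There are at most $\ell$ choices of $(i_0,j_0)$, and $\ell$ is an absolute constant, so the tail contributes $O(J^{-1})$.

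For the far range, the weight is at most $N^{-1}N^{1/4}=N^{-3/4}$. The full sum $\sum\prod_{ij}\phi(m_{ij})/p_{ij}$ is $O(1)$ by (\ref{Fourier l1 estimate all BD}), so this part is $O(N^{-3/4})$. Since $J\leq N^{o(1)}$ by Definition~\ref{params}, this is $\ll J^{-1}$.

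For the near range, I invoke Lemma~\ref{key lin alg}. Its hypotheses hold: $p_{ij}\leq R$, $m_{ij}\in p_{ij}^{-1}\mathbb{Z}\cap(-J,J)$, and $L\geq\ell$ for large $N$. The parameter conditions $R\ll J\leq N^{o(L^{-1})}$ follow from Definition~\ref{params}. The lemma then gives $\sum_{ij}\gamma_im_{ij}\in\mathbb{Z}$. Bounding the $\theta$-weight by $1$ and using $\phi\geq 0$, the near part is at most the sum defining $E'$, dropping the size restriction. Adding the three contributions gives $|E|\ll E'+J^{-1}$.

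The only step needing care is verifying the parameter hypotheses of Lemma~\ref{key lin alg} from Definition~\ref{params}. This is immediate since $\log J/\log R\to\infty$ and $L\log J=o(\log N)$. Everything else is routine bookkeeping with the nonnegativity of $\phi$.
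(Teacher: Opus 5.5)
Your proof is correct and is essentially the paper's argument. The paper also removes the tail $|m_{ij}|\geq J$ using (\ref{Fourier l1 estimate all BD}) and (\ref{Fourier l1 estimate tail BD}), then splits the remaining tuples by whether $\sum_{ij}\gamma_i m_{ij}\in\mathbb{Z}$ and applies the contrapositive of Lemma~\ref{key lin alg} on the non-integral part to get the $N^{-3/4}$ bound, whereas you split by the $N^{-1/4}$-closeness condition and apply the lemma directly; the two partitions are interchangeable here, and your check of the parameter hypotheses from Definition~\ref{params} is right.
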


\begin{proof}
    We first deal with the case when some $|m_{ij}|\geq J$ in the sum in Corollary~\ref{BD E cor}. Here, by (\ref{Fourier l1 estimate all BD}) and (\ref{Fourier l1 estimate tail BD}), we have
    \begin{equation*}
        \sum_{\substack{(m_{ij})\in \prod (p_{ij}^{-1}\mathbb{Z})\\\text{some }|m_{ij}|\geq J}}\prod_{ij}\frac{\phi(m_{ij})}{p_{ij}}\ll \ell J^{-1}\ll J^{-1}, 
    \end{equation*}
    Therefore, it suffices to show that
    \begin{equation*}
        \sum_{\substack{(m_{ij})\in \prod (p_{ij}^{-1}\mathbb{Z}\cap (-J, J))\\ \sum_{ij}m_{ij}\gamma_i \notin\mathbb{Z}}}\frac{1}{\max(N\|\sum_{ij} \alpha_im_{ij}\|_\mathbb{Z}, 1)}\prod_{ij}\frac{\phi(m_{ij})}{p_{ij}}\ll J^{-1}. 
    \end{equation*}
    Assume that $N$ is sufficiently large so that $L\geq \ell$. By Lemma~\ref{key lin alg}, for any tuple $(m_{ij})$ in the preceding sum, we have $\|\sum_{ij}\alpha_im_{ij}\|_\mathbb{Z}\geq N^{-1/4}$. Therefore, by (\ref{Fourier l1 estimate all BD}), 
    \begin{align*}
        \sum_{\substack{(m_{ij})\in \prod (p_{ij}^{-1}\mathbb{Z}\cap (-J, J))\\ \sum_{ij}m_{ij}\gamma_i \notin\mathbb{Z}}}\frac{1}{\max(N\|\sum_{ij} \alpha_im_{ij}\|_\mathbb{Z}, 1)}\prod_{ij}\frac{\phi(m_{ij})}{p_{ij}}&\ll N^{-3/4}\sum_{\substack{(m_{ij})\in \prod (p_{ij}^{-1}\mathbb{Z}\cap (-J, J))\\ \sum_{ij}m_{ij}\gamma_i \notin\mathbb{Z}}}\prod_{ij}\frac{\phi(m_{ij})}{p_{ij}}\\
        &\ll N^{-3/4}, 
    \end{align*}
    which suffices since $J\leq N^{o(1)}$. 
\end{proof}

\subsection{Type D terms}

We use the following lemma to bound $E'$ for type D tuples. 

\begin{df}
    Let $\mathcal{R}_p\subseteq \mathbb{Q}$ denote the set of rationals $a/b$ where $p\nmid b$ (where $a, b$ are coprime integers). 
\end{df}

\begin{lem} \label{type D lem}
    For any $p\in\mathcal{P}_N$, and any $\gamma'_1, \dots, \gamma'_v\in \{\gamma_1, \dots, \gamma_k\}$, 
    \begin{equation*}
        \sum_{\substack{m_1, \dots, m_v\in p^{-1}\mathbb{Z}\\ \sum_u m_u\gamma'_u\in\mathcal{R}_p}}\prod_u\frac{\phi(m_u)}{p}\ll p^{-1}. 
    \end{equation*}
\end{lem}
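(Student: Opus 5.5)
Write $m_u = a_u/p$ with $a_u\in\mathbb{Z}$. Since $p\in\mathcal{P}_N$, $p\notin\mathcal{B}_N'$, so each $\gamma'_u$ has numerator and denominator coprime to $p$; in particular $\gamma'_u\in\mathcal{R}_p$ and $\gamma'_u$ is invertible modulo $p$. Then
\begin{equation*}
    \sum_u m_u\gamma'_u = \frac{1}{p}\sum_u a_u\gamma'_u,
\end{equation*}
and $\sum_u a_u\gamma'_u\in\mathcal{R}_p$. So the condition $\sum_u m_u\gamma'_u\in\mathcal{R}_p$ holds if and only if $p$ divides the $p$-adic integer $\sum_u a_u\gamma'_u$, i.e.
\begin{equation*}
    \sum_u a_u\bar\gamma_u\equiv 0 \pmod p,
\end{equation*}
where $\bar\gamma_u\in(\mathbb{Z}/p)^\times$ is the reduction of $\gamma'_u$. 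The plan is to turn the constraint into this single linear congruence on $(a_1,\dots,a_v)$ and exploit that fixing all but one variable pins the last one down to a single residue class mod $p$.

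Two regimes are then distinguished. If $v=0$ the sum is empty-product $1$ with condition $0\in\mathcal{R}_p$, so we should presumably assume $v\geq 1$ (the application is to a prime occurring at least twice). Then fix $a_1,\dots,a_{v-1}$ arbitrarily and sum over $a_v$ restricted to one residue class $c$ mod $p$:
\begin{equation*}
    \sum_{a_v\equiv c\ (p)}\frac{1}{p}\phi(a_v/p) = \frac{1}{p}\sum_{t\in\mathbb{Z}}\phi(t + c/p)\ll \frac{1}{p},
\end{equation*}
using (\ref{Fourier l infty estimate BD}): the points $t+c/p$ are spaced by $1$, so at most $O(1)$ of them have $|m|\le 1$ and the rest contribute $\sum_t \min(1,|t|^{-2})\ll 1$. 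The remaining free sums $\sum_{a_u}\frac1p\phi(a_u/p)$ for $u<v$ are each $\ll 1$ by (\ref{Fourier l1 estimate all BD}). Multiplying gives $\ll_v p^{-1}$, and since $v\le\ell$ is bounded the constant is absolute in our convention.

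The only step needing care is the residue-class bound, where one must check it is uniform in $c$ and in $p$; this is where the decay $\phi(m)\ll\min(1,m^{-1})^2$ (rather than mere summability on $p^{-1}\mathbb{Z}$) is used, since a class mod $p$ in $p^{-1}\mathbb{Z}$ is a unit-spaced shifted lattice, and $\phi\ll 1$ handles the $O(1)$ points near the origin. The other point to verify is the reduction to a congruence, which relies on the exclusion of primes dividing numerators and denominators of the $\gamma_i$ (the definition of $\mathcal{B}_N'$), ensuring $\bar\gamma_v\neq 0$ so that $a_v$ is determined mod $p$ by the others.
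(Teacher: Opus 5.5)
Your proof is correct and follows the paper's argument. You sum the first $v-1$ variables freely using the $\ell^1$ bound. Because $\gamma'_v$ is a unit in $\mathcal{R}_p$, the last variable is confined to a single coset of $\mathbb{Z}$ (your single residue class for $a_v$ modulo $p$), on which the decay $\phi(m)\ll\min(1,m^{-1})^2$ gives $O(1)$. Your remark that $v\geq 1$ is needed is also right; the paper assumes this implicitly.
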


\begin{proof}
    We have, by (\ref{Fourier l1 estimate all BD}), 
    \begin{align*}
        \sum_{\substack{m_1, \dots, m_v\in p^{-1}\mathbb{Z}\\ \sum_u m_u\gamma'_u\in\mathcal{R}_p}}\prod_u\frac{\phi(m_u)}{p}&= \sum_{m_1, \dots, m_{v - 1}\in p^{-1}\mathbb{Z}}\prod_{u = 1}^{v - 1}\frac{\phi(m_u)}{p}\sum_{\substack{m_v\in p^{-1}\mathbb{Z}\\ \sum_u m_u\gamma'_u\in\mathcal{R}_p}}\frac{\phi(m_v)}{p}\\
        &\leq \left(\sum_{m'\in p^{-1}\mathbb{Z}}\frac{\phi(m')}{p}\right)^{v - 1}\sup_{r\in\mathbb{Q}}\sum_{\substack{m\in p^{-1}\mathbb{Z}\\ m\gamma'_v\in\mathcal{R}_p + r}}\frac{\phi(m)}{p}\\
        &\ll p^{-1}\sup_{r\in\mathbb{Q}}\sum_{\substack{m\in p^{-1}\mathbb{Z}\\ m\gamma'_v\in\mathcal{R}_p + r}}\phi(m)
    \end{align*}
    The numerator and denominator of $\gamma_v'$ are coprime to $p\in\mathcal{P}_N$ by the definition of $\mathcal{P}_N$. Therefore, the permissible values of $m$ in the sum
    \begin{equation*}
        \sum_{\substack{m\in p^{-1}\mathbb{Z}\\ m\gamma'_v\in\mathcal{R}_p + r}}\phi(m)
    \end{equation*}
    are all congruent modulo 1. Therefore, in light of (\ref{Fourier l infty estimate BD}), 
    \begin{equation*}
        \sum_{\substack{m\in p^{-1}\mathbb{Z}\\ m\gamma'_v\in\mathcal{R}_p + r}}\phi(m)\ll 1
    \end{equation*}
    for any choice of $r\in\mathbb{Q}$, and the desired bound follows. 
\end{proof}

\begin{cor} \label{individual type D}
    For any type D tuple $(p_{ij})$, 
    \begin{equation*}
        E'\ll \prod_{ij}'p_{ij}^{-1}, 
    \end{equation*}
    where $\prod_{ij}'$ denotes the product over distinct $p_{ij}$'s. 
\end{cor}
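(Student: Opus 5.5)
We group the indices $(i,j)$ according to the value of the prime $p_{ij}$. Let $q_1, \dots, q_s$ be the distinct primes among the $p_{ij}$, and for each $t\in[s]$ let $I_t = \{(i,j): p_{ij} = q_t\}$. Since $\prod'_{ij}p_{ij}^{-1} = \prod_{t}q_t^{-1}$, the goal is to extract one factor $q_t^{-1}$ from each group. Note that this does not really use the type D hypothesis; it holds for any tuple. The type D condition only matters later, when counting how many such tuples there are.

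The key observation concerns the $q_t$-adic structure of the constraint $\sum_{ij} m_{ij}\gamma_i\in\mathbb{Z}$. For $t' \neq t$, every $m_{ij}$ with $(i,j)\in I_{t'}$ lies in $q_{t'}^{-1}\mathbb{Z}\subseteq \mathcal{R}_{q_t}$. The $\gamma_i$ also lie in $\mathcal{R}_{q_t}$, because $q_t\in\mathcal{P}_N$ avoids $\mathcal{B}'_N$, and $\mathcal{R}_{q_t}$ is a ring. Hence the contribution $\sum_{(i,j)\notin I_t} m_{ij}\gamma_i$ lies in $\mathcal{R}_{q_t}$. Since $\mathbb{Z}\subseteq\mathcal{R}_{q_t}$, the constraint implies that for every $t$,
\begin{equation*}
    \sum_{(i,j)\in I_t} m_{ij}\gamma_i \in \mathcal{R}_{q_t}.
\end{equation*}
Using $\phi\geq 0$, we may therefore bound $E'$ by relaxing the single integrality constraint to these $s$ separate constraints, one per group.

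After this relaxation the sum factorises over groups:
\begin{equation*}
    E' \leq \prod_{t=1}^s \sum_{\substack{(m_{ij})_{(i,j)\in I_t}\in (q_t^{-1}\mathbb{Z})^{I_t}\\ \sum_{I_t} m_{ij}\gamma_i\in\mathcal{R}_{q_t}}}\ \prod_{(i,j)\in I_t}\frac{\phi(m_{ij})}{q_t}.
\end{equation*}
Each factor is exactly of the form treated in Lemma~\ref{type D lem}, with $p = q_t$, $v = |I_t|$, and $\gamma'_u$ running over the $\gamma_i$ with $(i,j)\in I_t$. The lemma bounds each factor by $\ll q_t^{-1}$. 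Since $s\leq\ell$ is absolutely bounded, the implied constants multiply to an absolute constant, and we obtain $E'\ll\prod_t q_t^{-1}$.

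The only step needing care is the $q_t$-adic claim that contributions from the other groups are harmless. This rests on the primes being distinct and on $\mathcal{P}_N$ excluding the primes dividing the numerators and denominators of the $\gamma_i$. Both facts are built into the definitions, so I do not expect a real obstacle.
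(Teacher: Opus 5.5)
Your proposal is correct and follows the paper's proof. The paper likewise notes that for each prime $p$ in the tuple, the integrality constraint forces $\sum_{p_{ij}=p}\gamma_i m_{ij}\in\mathcal{R}_p$, and then applies Lemma~\ref{type D lem} prime by prime; you make explicit the relaxation to these per-prime constraints (valid since $\phi\geq 0$) and the resulting factorisation, which the paper leaves implicit.
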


\begin{proof}
    Recall that $E'$ is defined as
    \begin{equation*}
        E' = \sum_{\substack{(m_{ij})\in \prod (p_{ij}^{-1}\mathbb{Z})\\ \sum_{ij}m_{ij}\gamma_i\in\mathbb{Z}}}\prod_{ij}\frac{\phi(m_{ij})}{p_{ij}}. 
    \end{equation*}
    The desired bound follows from the preceding lemma once we note that for each prime $p$ in the tuple $(p_{ij})$, we only sum over tuples $(m_{ij})$ satisfying
    \begin{equation*}
        \sum_{ij \text{ s.t. } p_{ij} = p}\gamma_im_{ij}\equiv_{\mod 1} \sum_{ij \text{ s.t. } p_{ij}\neq p}\gamma_im_{ij}\in\mathcal{R}_p. \qedhere
    \end{equation*}
\end{proof}

\begin{cor} \label{type D sum}
    We have
    \begin{equation*}
        \left|\sum_{\text{type D }(p_{ij})}E(p_{ij})\right|\ll \left(\log\log N\right)^{\frac{\ell - 1}{2}}. 
    \end{equation*}
\end{cor}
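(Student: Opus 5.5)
We combine the bound $|E|\ll E' + J^{-1}$ with Corollary~\ref{individual type D} and then count type D tuples according to their pattern of equalities. For each type D tuple $(p_{ij})$ we have
\begin{equation*}
    |E(p_{ij})|\ll \prod_{ij}'p_{ij}^{-1} + J^{-1}.
\end{equation*}
The number of tuples is at most $|\mathcal{P}_N|^\ell\leq R^\ell$. Since $\ell$ is fixed and $\log J/\log R\rightarrow\infty$ by Definition~\ref{params}, we have $R^\ell J^{-1} = o(1)$. Hence the error terms contribute $o(1)$ in total, which is certainly $\ll(\log\log N)^{(\ell-1)/2}$.

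For the main terms, we group type D tuples by the set partition of the index set $\{(i,j)\}$ (of size $\ell$) induced by equality of the primes. Only boundedly many partitions occur, depending on $\ell$. Fix a partition with blocks $S_1,\dots,S_s$. Being type D means no block is a singleton, since that would be type C. It also means the blocks are not all of size $2$: if every block has size exactly $2$, the tuple is type A or type B. Hence some block has size at least $3$, and since all blocks have size at least $2$,
\begin{equation*}
    \ell = \sum_t |S_t|\geq 2s + 1, \quad\text{that is,}\quad s\leq (\ell-1)/2.
\end{equation*}

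Summing over prime assignments to the blocks (dropping distinctness only increases the bound), the contribution of this partition is at most
\begin{equation*}
    \Bigl(\sum_{p\in\mathcal{P}_N}p^{-1}\Bigr)^s\ll(\log\log N)^s\leq(\log\log N)^{(\ell-1)/2},
\end{equation*}
using (\ref{good primes estimate}). Summing over the boundedly many partitions gives the claim.

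There is no substantial obstacle. The only care needed is the combinatorial check that type D forces $s\leq(\ell-1)/2$, and that the $J^{-1}$ error summed over all tuples is negligible, which relies on $J$ being much larger than any fixed power of $R$.
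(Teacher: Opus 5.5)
Your proposal is correct and follows the paper's own argument. The paper likewise combines $|E|\ll E'+J^{-1}$ with Corollary~\ref{individual type D}, absorbs the total error $R^\ell J^{-1}\ll 1$, and uses that a type D tuple has at most $(\ell-1)/2$ distinct primes. Your grouping by set partitions just spells out the combinatorial step that the paper compresses into the bound $\bigl(1+\sum_{p\in\mathcal{P}_N}p^{-1}\bigr)^{(\ell-1)/2}$.
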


\begin{proof}
    We have
    \begin{align*}
        \left|\sum_{\text{type D }(p_{ij})}E(p_{ij})\right|&\ll \sum_{\text{type D }(p_{ij})}E'(p_{ij}) + R^\ell J^{-1}\\
        &\ll \left(\sum_{\text{type D }(p_{ij})}\prod_{ij}'p_{ij}^{-1}\right) + 1\\
        &\ll \left(1 + \sum_{p\in\mathcal{P}_N} p^{-1}\right)^{\frac{\ell - 1}{2}}, 
    \end{align*}
    where the last inequality follows from the fact that each type D tuple involves at most $\frac{\ell - 1}{2}$ distinct primes in $\mathcal{P}_N$. The desired bound now follows from Mertens' estimate. 
\end{proof}

\subsection{Type B terms}

The treatment for type B terms follows the same approach as our treatment for type D terms above. We begin by proving the analogue of Lemma~\ref{type D lem} for type B terms. This is the only place where we use the irrationality condition $\alpha_i/\alpha_j\notin\mathbb{Q}$ for $i\neq j$. 

\begin{lem} \label{type B lem}
    For any $p\in\mathcal{P}_N$, and any $\gamma'_1\neq \gamma'_2\in \{\gamma_1, \dots, \gamma_k\}$, 
    \begin{equation*}
        \sup_{r\in\mathcal{R}_p}\sum_{\substack{m_1, m_2\in p^{-1}\mathbb{Z}\\ m_1\gamma'_1 + m_2\gamma'_2\in\mathbb{Z} + r}}\frac{\phi(m_1)}{p}\frac{\phi(m_2)}{p}\lll p^{-1}, 
    \end{equation*}
    where the convergence is uniform across $p\in\mathcal{P}_N$. 
\end{lem}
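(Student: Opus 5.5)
Throughout, write $\gamma'_2/\gamma'_1 = u/w$ in lowest terms and set $H := \max(|u|, w)$, the height of $\gamma'_2/\gamma'_1$. Since $\gamma_i\to\alpha_i$ and $\alpha_i/\alpha_{i'}\notin\mathbb{Q}$ for $i\neq i'$, and only finitely many rationals of height $\le h$ lie in any bounded interval, we get $H\to\infty$ as $N\to\infty$. This is the only input from the irrationality hypothesis, and it does not depend on $p$. The approach is to exploit the fact that the constraint $m_1\gamma'_1+m_2\gamma'_2\in \mathbb{Z}+r$ is an \emph{exact} linear condition. It forces $(m_1,m_2)$ onto a family of parallel lines spaced $\asymp 1$ apart, and on each such line the points of $p^{-1}\mathbb{Z}^2$ are very sparse because $H$ is large.

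First I will truncate. Fix a parameter $K = K(N)\to\infty$ slowly, to be chosen later. Consider the part of the sum where $|m_2| > K$. For each fixed $m_2$, the condition determines $m_1\gamma'_1$ modulo $1$, hence determines $m_1$ within a single coset of $(\gamma'_1)^{-1}\mathbb{Z}$. Since $\gamma'_1\asymp 1$, (\ref{Fourier l infty estimate BD}) gives $\sum_{m_1}\phi(m_1)\ll 1$ over that coset. So this part is $\ll p^{-2}\sum_{|m_2|>K,\, m_2\in p^{-1}\mathbb{Z}}\phi(m_2)\ll p^{-1}K^{-1}$ by (\ref{Fourier l1 estimate tail BD}). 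By symmetry the same bound holds when $|m_1|>K$. It therefore remains to handle the box $|m_1|,|m_2|\le K$, where I simply bound $\phi\ll 1$.

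Inside the box, the points satisfy $m_1\gamma'_1+m_2\gamma'_2 = r+n$ for some $n\in\mathbb{Z}$ with $|n|\ll K$, so there are $O(K)$ relevant lines. On one line, two solutions differ by $(\Delta_1,\Delta_2)\in p^{-1}\mathbb{Z}^2$ with $\Delta_1 = -(u/w)\Delta_2$. Writing $\Delta_2 = s/p$, we need $w\mid s$ because $\Delta_1\in p^{-1}\mathbb{Z}$. Hence nonzero differences satisfy $|\Delta_2|\ge w/p$ and $|\Delta_1| = |u||s|/(wp)\ge |u|/p$, so they have sup-norm $\ge H/p$. Each line therefore meets the box in at most $1 + O(Kp/H)$ admissible points. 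The box contribution is then
\begin{equation*}
    \ll p^{-2}\cdot K\cdot\left(1+\frac{Kp}{H}\right) = p^{-1}\left(\frac{K}{p}+\frac{K^2}{H}\right).
\end{equation*}

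Combining the two parts, the supremum is
\begin{equation*}
    \ll p^{-1}\left(K^{-1}+\frac{K}{\log N}+\frac{K^2}{H}\right),
\end{equation*}
using $p>\log N$ for $p\in\mathcal{P}_N$. This bound is uniform in $r$ and in $p\in\mathcal{P}_N$. Choosing $K\to\infty$ with $K = o(\min(H^{1/2},\log N))$ makes the bracket tend to $0$, which gives the claimed $\lll p^{-1}$ uniformly in $p$. The step I expect to need the most care is the line-point count. Specifically, one must check that the constraint is an exact equation rather than merely a $p$-adic congruence, since the naive congruence count only gives $\asymp p^{-1}$. One must also check that the spacing bound $H/p$ correctly uses $p\nmid w$, which holds because $p\notin\mathcal{B}_N'$. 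The fact that $r\in\mathcal{R}_p$ plays no role beyond fixing the family of lines.
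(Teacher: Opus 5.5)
Your proof is correct. It rests on the same two geometric facts as the paper's proof: on each line $\gamma'_1m_1+\gamma'_2m_2=\text{const}$ the points of $p^{-1}\mathbb{Z}^2$ are spaced $\gg H/p$ apart, and distinct lines are spaced $\gg 1$ apart. You use these facts differently, though.

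The paper first uses $r\in\mathcal{R}_p$ and $p\notin\mathcal{B}'_N$ to rewrite the constraint as the integer congruence $g_1m_1+g_2m_2\in g_4\mathbb{Z}+r'$. It then sums over all lines at once, using the decay of $\phi$ in both variables. The two lines $s\in\{-1,0\}$ contribute $\ll p/g_1+1$ each. The line indexed by any other $s$ lies at sup-distance $\gg g_4|s|/g_1$ from the origin, so the factor $\max(|m_1|,|m_2|)^{-2}$ makes these contributions summable; this is where $g_4\gg g_1$ enters. The result is the explicit bound $\ll p^{-1}(g_1^{-1}+p^{-1})$ with no auxiliary parameter.

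You instead truncate to the box $|m_1|,|m_2|\leq K$, using the single-coset estimate and the tail bound. Inside the box you simply count lattice points on $O(K)$ lines with $\phi\ll 1$. This costs a slowly growing parameter $K$ and gives a weaker rate, roughly $H^{-1/3}+(\log N)^{-1/2}$ after optimising $K$. That loss is immaterial for the qualitative $\lll p^{-1}$ needed here. In return your argument is more elementary and slightly more general: it holds uniformly over all real shifts $r$, and it never uses $r\in\mathcal{R}_p$ or the coprimality of $p$ with the numerators and denominators of the $\gamma_i$.

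One small correction to your closing remark: your spacing bound does not actually use $p\nmid w$. The condition $\Delta_1=-us/(wp)\in p^{-1}\mathbb{Z}$ is equivalent to $w\mid us$, hence to $w\mid s$ by $\gcd(u,w)=1$ alone, since the factor $p$ cancels.
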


\begin{proof}
    Let $\gamma'_1/\gamma'_2 = g_1/g_2$, where $g_1, g_2$ are coprime positive integers. Let $\gamma'_1/g_1 = \gamma'_2/g_2 = g_3/g_4$, where $g_3, g_4$ are coprime positive integers. As the numerators and denominators of $\gamma'_1, \gamma'_2$ are coprime to $p\in \mathcal{P}_N$, $p\nmid g_1, g_2, g_3, g_4$. Since $g_1/g_2 = \gamma'_1/\gamma'_2\rightarrow\alpha_1/\alpha_2\notin\mathbb{Q}$ as $N\rightarrow\infty$, we have $g_4\gg g_1\asymp g_2\rightarrow\infty$ as $N\rightarrow\infty$. We seek to show that
    \begin{equation*}
    \sup_{r\in\mathcal{R}_p}\sum_{\substack{m_1, m_2\in p^{-1}\mathbb{Z}\\ g_1m_1 + g_2m_2\in g_4/g_3\mathbb{Z} + rg_4/g_3}}\phi(m_1) \phi(m_2)\lll p. 
    \end{equation*}
    For $m_1, m_2\in p^{-1}\mathbb{Z}$, we must have $g_1m_1 + g_2m_2\in p^{-1}\mathbb{Z}$, and any $r\in\mathcal{R}_p$, 
    \begin{equation*}
        (g_4/g_3\mathbb{Z} + rg_4/g_3)\cap \left(p^{-1}\mathbb{Z}\right)\subseteq g_4\mathbb{Z} + r'
    \end{equation*}
    for some $r'\in\mathbb{Z}\cap [0, g_4)$. Therefore, the sum above is upper bounded by
    \begin{equation*}
        \sup_{r'\in\mathbb{Z}\cap [0, g_4)}\sum_{\substack{m_1, m_2\in p^{-1}\mathbb{Z}\\ g_1m_1 + g_2m_2\in g_4\mathbb{Z} + r'}}\phi(m_1) \phi(m_2). 
    \end{equation*}
    We shall make use of the bound that
    \begin{equation*}
        \sum_{m\in x\mathbb{Z} + y}\phi(m)\ll \sum_{m\in x\mathbb{Z} + y}\min(m^{-2}, 1)\ll x^{-1} + 1
    \end{equation*}
    for any real numbers $x > 0$ and $y$. For any integer $s\neq 0, -1$, if $g_1m_1 + g_2m_2 = g_4s + r'$, then $\max(m_1, m_2)\geq \frac{|g_4s + r'|}{g_1 + g_2}\geq \frac{g_4|s|/2}{g_1 + g_2}$. Therefore, 
    \begin{align*}
        &\sum_{\substack{m_1, m_2\in p^{-1}\mathbb{Z}\\ g_1m_1 + g_2m_2 = g_4s + r'}}\phi(m_1) \phi(m_2)\\
        &\ll \sum_{\substack{m_1, m_2\in p^{-1}\mathbb{Z}\\ g_1m_1 + g_2m_2 = g_4s + r'}}\min(m_1^{-2}, 1) \min(m_2^{-2}, 1)\\
        &= \sum_{\substack{m_1, m_2\in p^{-1}\mathbb{Z}\\ g_1m_1 + g_2m_2 = g_4s + r'}}\min(\max(m_1^{-2}, m_2^{-2}), 1)\min(\min(m_1^{-2}, m_2^{-2}), 1)\\
        &\leq \sum_{\substack{m_1, m_2\in p^{-1}\mathbb{Z}\\ g_1m_1 + g_2m_2 = g_4s + r'}}(\min(m_1^{-2}, 1) + \min(m_2^{-2}, 1))\max(m_1, m_2)^{-2}\\
        &\ll \left(\frac{g_4|s|}{g_1}\right)^{-2}\sum_{\substack{m_1, m_2\in p^{-1}\mathbb{Z}\\ g_1m_1 + g_2m_2 = g_4s + r'}}\left(\min\left(m_1^{-2}, 1\right) + \min\left(m_2^{-2}, 1\right)\right)\\
        &\ll \left(\frac{g_4|s|}{g_1}\right)^{-2}\left(\frac{p}{g_1} + 1\right). 
    \end{align*}
    Therefore, for any choice of $r'\in\mathbb{Z}\cap [0, g_4)$, we have
    \begin{align*}
        &\sum_{\substack{m_1, m_2\in p^{-1}\mathbb{Z}\\ g_1m_1 + g_2m_2\in g_4\mathbb{Z} + r'}}\phi(m_1) \phi(m_2)\\
        &= \sum_{s = -1, 0}\sum_{\substack{m_1, m_2\in p^{-1}\mathbb{Z}\\ g_1m_1 + g_2m_2\in g_4s + r'}}\phi(m_1) \phi(m_2) + \sum_{s \neq -1, 0}\sum_{\substack{m_1, m_2\in p^{-1}\mathbb{Z}\\ g_1m_1 + g_2m_2\in g_4s + r'}}\phi(m_1) \phi(m_2)\\
        &\ll \sum_{s = -1, 0}\sum_{\substack{m_1, m_2\in p^{-1}\mathbb{Z}\\ g_1m_1 + g_2m_2\in g_4s + r'}}\phi(m_1) + \sum_{s\neq -1, 0}\left(\frac{g_4|s|}{g_1}\right)^{-2}\left(\frac{p}{g_1} + 1\right)\\
        &\ll \frac{p}{g_1} + \left(\frac{g_4}{g_1}\right)^{-2}\left(\frac{p}{g_1} + 1\right)\\
        &\ll \frac{p}{g_1} + 1. 
    \end{align*}
    where the implied constant is absolute and, in particular, independent of $r'$. Therefore, 
    \begin{equation*}
        \sup_{r'\in\mathbb{Z}\cap [0, g_4)}\sum_{\substack{m_1, m_2\in p^{-1}\mathbb{Z}\\ g_1m_1 + g_2m_2\in g_4\mathbb{Z} + r'}}\phi(m_1) \phi(m_2) \ll \frac{p}{g_1} + 1\lll p
    \end{equation*}
    as $N\rightarrow\infty$, since $p\geq \log N\ggg 1$ for $p\in\mathcal{P}_N$. 
\end{proof}

We now prove the analogue of Corollary~\ref{individual type D} for type B terms. 

\begin{cor} \label{individual type B}
    For any type B tuple $(p_{ij})$, 
    \begin{equation*}
        E'\lll \prod_{ij}'p_{ij}^{-1}, 
    \end{equation*}
    where $\prod_{ij}'$ denotes the product over distinct $p_{ij}$'s, and the convergence is uniform across type B tuples (with $\ell$ entries). 
\end{cor}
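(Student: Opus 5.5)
We follow the proof of Corollary~\ref{individual type D}, treating primes one at a time, but use Lemma~\ref{type B lem} for one distinguished prime in place of Lemma~\ref{type D lem}. Since $(p_{ij})$ is of type B, its entries come in distinct pairs, so there are exactly $\ell/2$ distinct primes, each occurring exactly twice. By definition there is a prime $p$ occurring at two positions $(i_1,j_1),(i_2,j_2)$ with $i_1\neq i_2$. We have $\gamma_{i_1}\neq\gamma_{i_2}$ for large $N$, since $\gamma_{i}\to\alpha_{i}$ and $\alpha_{i_1}\neq\alpha_{i_2}$ (their ratio is irrational). Let $q_1,\dots,q_s$ denote the remaining distinct primes, each occurring at two positions.

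The starting point is the constraint $\sum_{ij}\gamma_im_{ij}\in\mathbb{Z}$ in the definition of $E'$. We isolate the contribution of $p$:
\begin{equation*}
    \gamma_{i_1}m_{i_1j_1}+\gamma_{i_2}m_{i_2j_2}\in \mathbb{Z} - \sum_{(i,j)\neq(i_1,j_1),(i_2,j_2)}\gamma_im_{ij}.
\end{equation*}
Every other $m_{ij}$ lies in $q^{-1}\mathbb{Z}$ for a prime $q\neq p$. Moreover $\gamma_i$ has numerator and denominator coprime to $p$, because $p\in\mathcal{P}_N$. Hence the right-hand side is of the form $\mathbb{Z}+r$ with $r\in\mathcal{R}_p$.

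We then fix all $m_{ij}$ not attached to $p$ and sum over the pair $(m_{i_1j_1},m_{i_2j_2})$. By Lemma~\ref{type B lem}, this inner sum is at most $\sup_{r\in\mathcal{R}_p}(\cdots) = o(p^{-1})$, uniformly in $p\in\mathcal{P}_N$ and hence uniformly in the fixed outer variables.

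For the outer variables we repeat the argument of Lemma~\ref{type D lem}. We condition successively on each prime $q_t$, having already dropped the exact integrality constraint at $p$. The constraint inherited for $q_t$ is that $\sum_{p_{ij}=q_t}\gamma_im_{ij}$ lies in $\mathcal{R}_{q_t}$ plus the contributions of the other primes. To make this rigorous, we order the primes $q_1,\dots,q_s$. The constraint then says that the $q_t$-part of the sum is congruent modulo $\mathcal{R}_{q_t}$ to a fixed rational determined by the remaining variables, as in the proof of Corollary~\ref{individual type D}. The $p$-block variables lie in $p^{-1}\mathbb{Z}\subseteq\mathcal{R}_{q_t}$, so they do not affect this.

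Thus we bound $E'$ by
\begin{equation*}
    \sum_{\text{outer } m}\ \prod_{\text{outer}}\frac{\phi(m_{ij})}{p_{ij}}\cdot\sup_{r\in\mathcal{R}_p}\sum_{\text{$p$-pair}}\frac{\phi\phi}{p^2}.
\end{equation*}
Here the outer sum ranges over tuples satisfying, for each $t$, the local condition ``$q_t$-block sum $\in\mathcal{R}_{q_t}+r_t$'' with $r_t$ determined by the others. Iterating the argument of Lemma~\ref{type D lem}, one prime block at a time (fix all other blocks, then apply the lemma's bound with a supremum over $r$), gives $\ll\prod_t q_t^{-1}$. Multiplying by $o(p^{-1})$ yields $E'=o\bigl(\prod'_{ij}p_{ij}^{-1}\bigr)$.

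Uniformity over type B tuples follows because the implied constants from Lemma~\ref{type D lem} depend only on $\ell$, and Lemma~\ref{type B lem} is uniform over $p\in\mathcal{P}_N$. There are finitely many choices of the pair $\gamma_{i_1}\neq\gamma_{i_2}$ and finitely many pairing patterns.

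The main obstacle is the bookkeeping in the iterated conditioning: the single integrality constraint couples all the primes, and we must split it into a constraint local to each prime. The point is that integrality of the total sum implies, for every prime $q$, that the $q$-block sum lies in $\mathcal{R}_q$ minus the other blocks. One may therefore keep all these local conditions and discard the global one. We need them to be applicable sequentially, each bounded uniformly in the variables not yet summed. This works because each bound is a supremum over the shift $r$.
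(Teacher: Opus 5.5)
Your proposal is correct and follows the paper's route. You isolate a prime $p$ occurring at positions with $i_1\neq i_2$, bound the constrained $p$-pair sum by $\sup_{r\in\mathcal{R}_p}$ using Lemma~\ref{type B lem}, and bound each remaining prime block by Lemma~\ref{type D lem}. You also keep explicitly the local conditions $\sum_{p_{ij}=q}\gamma_i m_{ij}\in\mathcal{R}_q$ for $q\neq p$; these are implied by integrality and depend only on the $q$-block, since every other block contributes an element of $\mathcal{R}_q$. Keeping them is exactly what makes the factorised bound valid. The paper's display drops these conditions, and once they are dropped each unconstrained $q$-factor equals $\chi_q(0)^2=4$ rather than $O(q^{-1})$.
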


\begin{proof}
    Let $p = p_{i_1j_1} = p_{i_2j_2}$ where $i_1\neq i_2$. For a prime $q\neq p$ in the tuple $(p_{ij})$, let $\mathcal{I}_q := \{(i, j)|i\in [k], j\in [\ell_i], p_{ij} = q\}$ and let $\mathcal{I} := \{(i, j)|i\in [k], j\in [\ell_i], p_{ij}\neq p\} = \cup_{q\neq p} \mathcal{I}_q$. We have, by Lemmas~\ref{type B lem} and~\ref{type D lem}, 
    \begin{align*}
        E' &= \sum_{\substack{(m_{ij})\in \prod (p_{ij}^{-1}\mathbb{Z})\\ \sum_{ij}m_{ij}\gamma_i\in\mathbb{Z}}}\left(\prod_{ij \text{ s.t. }p_{ij} \neq p}\frac{\phi(m_{ij})}{p_{ij}}\right)\frac{\phi(m_{i_1j_1})}{p}\frac{\phi(m_{i_2j_2})}{p}\\
        &= \sum_{(m_{ij})_{ij\in \mathcal{I}}\in \prod_{ij\in \mathcal{I}}(p_{ij}^{-1}\mathbb{Z})}\left(\prod_{ij\in \mathcal{I}}\frac{\phi(m_{ij})}{p_{ij}}\right)\sum_{\substack{m_{i_1j_1}, m_{i_2j_2}\in p^{-1}\mathbb{Z}\\\gamma_{i_1}m_{i_1j_1} + \gamma_{i_2}m_{i_2j_2}\in \\\mathbb{Z} - \sum_{ij\in\mathcal{I}}m_{ij}\gamma_i}}\frac{\phi(m_{i_1j_1})}{p}\frac{\phi(m_{i_2j_2})}{p}\\
        &\leq \left(\prod_{q\neq p}\sum_{(m_{ij})_{ij\in \mathcal{I}_q}\in \prod_{ij\in \mathcal{I}_q}(p_{ij}^{-1}\mathbb{Z})}\prod_{ij \in \mathcal{I}_q}\frac{\phi(m_{ij})}{q}\right)\cdot\left(\sup_{r\in\mathcal{R}_p}\sum_{\substack{m_{i_1j_1}, m_{i_2j_2}\in p^{-1}\mathbb{Z}\\\gamma_{i_1}m_{i_1j_1} + \gamma_{i_2}m_{i_2j_2}\in \\\mathbb{Z} + r}}\frac{\phi(m_{i_1j_1})}{p}\frac{\phi(m_{i_2j_2})}{p}\right)\\
        &\lll \left(\prod_{q\neq p}q^{-1}\right)\cdot p^{-1}\\
        & = \prod_{ij}'p_{ij}^{-1}, 
    \end{align*}
    where we take the product over distinct primes $q\neq p$ in the tuple $(p_{ij})$. 
\end{proof}

Lastly, in analogy to Corollary~\ref{type D sum}, we have the following corollary on the total contribution from type B terms, which follows from the argument of Corollary~\ref{type D sum}. 

\begin{cor} \label{type B sum}
    We have
    \begin{equation*}
        \left|\sum_{\text{type B }(p_{ij})}E(p_{ij})\right|\lll \left(\log\log N\right)^{\frac{\ell}{2}}. 
    \end{equation*}
\end{cor}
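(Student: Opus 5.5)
We follow the argument of Corollary~\ref{type D sum}, with Corollary~\ref{individual type B} in place of Corollary~\ref{individual type D}. First, by the lemma bounding $|E|$ in terms of $E'$, we have $|E(p_{ij})|\ll E'(p_{ij}) + J^{-1}$ for every tuple. The number of tuples is at most $R^\ell$, so the error terms contribute at most $R^\ell J^{-1}$ in total. By Definition~\ref{params} we have $\log J/\log R\ggg 1$, so $R^\ell J^{-1} = o(1)$ once $N$ is large relative to $\ell$. It therefore suffices to show that
\begin{equation*}
    \sum_{\text{type B }(p_{ij})}E'(p_{ij})\lll (\log\log N)^{\ell/2}.
\end{equation*}

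By Corollary~\ref{individual type B}, there is a function $\epsilon(N)\rightarrow 0$, uniform over type B tuples with $\ell$ entries, such that $E'(p_{ij})\leq \epsilon(N)\prod_{ij}'p_{ij}^{-1}$ for every type B tuple. Hence the sum above is at most
\begin{equation*}
    \epsilon(N)\sum_{\text{type B }(p_{ij})}\prod_{ij}'p_{ij}^{-1}.
\end{equation*}
In a type B tuple the primes come in pairs, so there are exactly $\ell/2$ distinct primes (and $\ell$ is even). We group the tuples by the pairing pattern of the index set, which is a perfect matching on the $\ell$ positions; there are $O_\ell(1)$ such patterns. For a fixed pattern, summing $\prod_{ij}' p_{ij}^{-1}$ over the assignments of primes to the pairs gives at most
\begin{equation*}
    \left(\sum_{p\in\mathcal{P}_N}p^{-1}\right)^{\ell/2}\ll (\log\log N)^{\ell/2},
\end{equation*}
where the last bound is (\ref{good primes estimate}). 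It does not matter that the primes are required to be distinct, since dropping that constraint only enlarges the sum.

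Combining these estimates gives
\begin{equation*}
    \left|\sum_{\text{type B}}E\right|\ll \epsilon(N)(\log\log N)^{\ell/2} + o(1)\lll (\log\log N)^{\ell/2}.
\end{equation*}
The only delicate point is the uniformity of the $\lll$ in Corollary~\ref{individual type B} over all type B tuples. That uniformity is exactly what that corollary provides, since the bound in Lemma~\ref{type B lem} is $\ll p/g_1 + 1$ with $p\geq\log N$ and $g_1\rightarrow\infty$, independently of the particular tuple. So no real obstacle remains.
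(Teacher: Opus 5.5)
Your proposal is correct and is essentially the paper's proof. The paper simply reruns the argument of Corollary~\ref{type D sum}: bound $|E|\ll E'+J^{-1}$, absorb the $R^\ell J^{-1}=o(1)$ error, replace Corollary~\ref{individual type D} by the uniform $o\bigl(\prod_{ij}'p_{ij}^{-1}\bigr)$ bound of Corollary~\ref{individual type B}, and sum over the $\ell/2$ distinct primes using (\ref{good primes estimate}). One cosmetic point: your name $\epsilon(N)$ for the vanishing factor clashes with the paper's $\epsilon=J^{-1/2}$.
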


\section{Type A and C tuples} \label{type A and C}

We now carry out the estimates for type A and C terms, in an analogous manner to the treatment of type B and D terms in Section~\ref{type B and D}. The main difference to Section~\ref{type B and D} stems from the use of a different approximant of $1_{[0, 1)}$, which requires a more technical treatment, given below in Subsection~\ref{indicator approximation 2}. 

\subsection{Smoothing the indicator} \label{indicator approximation 2}

Recall that
\begin{equation*}
    1_{p\mid \floor{x}} = \sum_{m\in \mathbb{Z}}1_{[0, 1)}(x - pm), 
\end{equation*}
may be written as a sum of indicator functions of unit intervals. 

\begin{df}
    Let $\epsilon =\epsilon(N) := J^{-1/2}\geq N^{-o(1)}$. Let 
    \begin{equation*}
        \chi_+ := \epsilon^{-1}1_{[0, 1]}*1_{[0, \epsilon]},\qquad\chi_- := \epsilon^{-1}1_{[0, 1]}*1_{[-\epsilon, 0]}
    \end{equation*}
    approximate $1_{[0, 1)}$. Let 
    \begin{equation*}
        \chi_{p, \pm}(x) := \sum_{m\in \mathbb{Z}}\chi_\pm(x - pm), 
    \end{equation*}
    be the corresponding approximants of $1_{p\mid \floor{x}}$. 
\end{df}

Approximating $1_{p\mid \floor{x}}$ by $\chi_{p, \pm}(x)$ incurs an error for $x\in [0, \epsilon)\mod 1$ and $x\in ( - \epsilon, 0)\mod 1$, respectively. Unfortunately, neither approximant suffices universally. 

\begin{df} \label{partition}
    \footnote{This partitioning is unnecessary in the special case $\beta_1 = \dots = \beta_k = 0$, where one may treat the entire interval $[N]$ at once.}We partition $[N]$ into $N^{1/2}$ intervals $S_1, \dots, S_{N^{1/2}}$ of $N^{1/2}$ integers each. 
\end{df}

We shall choose one of the approximants $\chi_{p, \pm}$ for each interval. Strictly speaking, for Definition~\ref{partition}, we need to assume that $N$ is a square. This is not problematic as it suffices to show Theorem~\ref{main} along square values of $N$. We may also resolve this issue by suitably rounding the lengths of $S_u$. 

\begin{df} \label{igood}
    Call an interval $S$ of integers \emph{$i_+$-good} if 
    \begin{equation*}
        \mathbb{P}_{n\sim U(S)}(\alpha_i n + \beta_i\in [0, \epsilon)\mod 1)\leq \epsilon^{1/4}
    \end{equation*}
    Call $S$ \emph{$i_-$-good} if 
    \begin{equation*}
        \mathbb{P}_{n\sim U(S)}(\alpha_i n + \beta_i\in ( - \epsilon, 0)\mod 1)\leq \epsilon^{1/4}. 
    \end{equation*}
    Call $S$ \emph{$i$-good} if it is either $i_+$-good or $i_-$-good, and call $S$ \emph{$i$-bad} otherwise. Call $S$ \emph{good} if it is $i$-good for all $i \in [k]$, and call $S$ \emph{bad} otherwise. 
\end{df}

We shall approximate $1_{p\mid \floor{\alpha_i n + \beta_i}}$ by $\chi_{p, +}(\alpha_i n + \beta_i)$ on $i_+$-good intervals and $\chi_{p, -}(\alpha_i n + \beta_i)$ on $i_-$-good intervals. We first upper bound the number of bad intervals amongst $S_1, \dots, S_{N^{1/2}}$. 

To do so, we split into cases according to how well $\alpha_i$ approximates rationals. 

\begin{df}
    Let $T = T(N) := J^{1/8} = \epsilon^{-1/4} = N^{o(1)}$. Let 
    \begin{equation*}
        \mathfrak{M} = \mathfrak{M}_N := \bigcup_{a\in\mathbb{Z}, b\in [T]}[a/b - N^{-1/3}, a/b + N^{-1/3}], 
    \end{equation*}
    denote the set of \emph{major arcs} and let 
    \begin{equation*}
        \mathfrak{m} = \mathfrak{m}_N := \mathbb{R}\backslash\mathfrak{M}
    \end{equation*}
    denote the set of \emph{minor arcs}. 
\end{df}

Since $T \leq N^{o(1)}$, the major arcs are non-intersecting (for sufficiently large $N$). 

\begin{lem} \label{major arcs}
    If $\alpha_i\in \mathfrak{M}$, then there are at most 
    \begin{equation*}
        O(\epsilon^{1/4}N^{1/2})
    \end{equation*}
    $i$-bad intervals amongst $S_1, \dots, S_{N^{1/2}}$. 
\end{lem}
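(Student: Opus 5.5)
Write $\alpha_i = a/b + \delta$ with $b\in[T]$, $\gcd(a,b)=1$ and $|\delta|\leq N^{-1/3}$. The plan is to show that on each interval $S_u$ the sequence $\alpha_i n+\beta_i \bmod 1$, $n\in S_u$, stays very close to a single coset of $b^{-1}\mathbb{Z}/\mathbb{Z}$, shifted by a drift that changes only slowly with $u$. An interval can be $i$-bad only if this coset meets both of the small windows $[0,\epsilon)$ and $(-\epsilon,0)$ around $0 \bmod 1$. We then count the intervals $S_u$ for which that happens.

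First, fix an interval $S_u=\{n_u+1,\dots,n_u+N^{1/2}\}$ and write $n = n_u + t$ with $1\le t\le N^{1/2}$. Then
\begin{equation*}
\alpha_i n+\beta_i = \frac{a t}{b} + \left(\alpha_i n_u + \beta_i + \delta t\right),
\end{equation*}
and $|\delta t|\leq N^{-1/3}N^{1/2}=N^{1/6}$. This drift is too large, so we first refine the partition. Each $S_u$ splits into $N^{1/2}/K$ blocks of length $K$; on each block the drift is at most $N^{-1/3}K$, which is much smaller than $\epsilon$ once $K = \epsilon N^{1/3}/100$. Since $\epsilon=N^{-o(1)}$, this $K$ is still huge compared with $b\le T$. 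On such a block the values $\alpha_i n+\beta_i \bmod 1$ equal $c + at/b$ up to an error $\ll \epsilon/100$, where $c = c(\text{block})$. Because $\gcd(a,b)=1$, the residues $at \bmod b$ are equidistributed over the block, up to an error $O(b/K)$.

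For a block with shift $c$, the proportion of $n$ landing in $[0,\epsilon)\bmod 1$ is essentially the proportion of $j\in\mathbb{Z}/b$ with $c+j/b\in[0,\epsilon)$. There are two cases.
\begin{itemize}
\item If $b\epsilon < 1/2$, the windows have length $< 1/(2b)$, so at most one $j$ can hit them. The block then contributes a proportion at most $1/b$ to one of the two windows, and only when $\|bc\|_{\mathbb{Z}}\le b\epsilon$ roughly.
\item If $b\epsilon\geq 1/2$, the proportion is $\le \epsilon + 1/b \ll \epsilon \le \epsilon^{1/4}$, so both windows are fine.
\end{itemize}
Even in the first case, a point $c+j/b$ lies in at most one of $[0,\epsilon)$ and $(-\epsilon,0)$, up to the $\epsilon/100$ fuzz. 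Averaging over the blocks of $S_u$ and using $1/b$ in place of the bound $\epsilon^{1/4}$ is too crude: when $b$ is small, $1/b$ can exceed $\epsilon^{1/4}$. So the key is to show that $S_u$ is bad only if a large fraction of its blocks have $bc$ within about $b\epsilon$ of an integer, with hits on both sides of $0$.

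The per-block shift is $c_v = \alpha_i n_v+\beta_i$, and as $v$ runs over consecutive blocks it satisfies $bc_v \equiv b\beta_i + b\delta n_v$ modulo $1$ (using $bK a/b\in\mathbb{Z}$). This is a slowly moving linear function of $n_v$ with slope $b\delta$.

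The main obstacle is this step. If $|b\delta|N^{1/2} \gg \epsilon$, then within one $S_u$ the quantity $bc$ moves linearly through a range much larger than $b\epsilon$. The fraction of $n\in S_u$ with $\|bc\|\le 2b\epsilon$ is then $O(b\epsilon\cdot(\text{number of wraps})/|\text{range}|) + O(b\epsilon)$, giving a proportion $O(\epsilon)\cdot b\cdot b^{-1}$, which is $\le\epsilon^{1/4}$; so every such $S_u$ is good. Otherwise $bc$ is essentially constant, within $\epsilon$, on all of $S_u$. In that case $S_u$ is $i_+$-good or $i_-$-good depending on the side of $0$ that $bc$ falls on, unless $\|bc_u\|_{\mathbb{Z}}\le 2\epsilon$, say. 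As $u$ varies, $bc_u$ moves with step $b\delta N^{1/2}$, which is $\ll \epsilon$ here. The number of $u\le N^{1/2}$ landing in a fixed $2\epsilon$-neighbourhood of the integers is then $O(\epsilon N^{1/2} + (\text{one wrap's worth}))$. Counting wraps as at most $|b\delta| N+1$ and dwell time per wrap as $O(\epsilon/(|b\delta|N^{1/2}))$ gives $O(\epsilon N^{1/2}+\epsilon/(|b\delta|N^{1/2}))$. The second term is bounded via the case condition, together with the trivial dwell bound when $\delta$ is tiny. In the degenerate case where the dwell is unbounded, $bc_u$ is nearly constant, and then either all intervals are good or we must use the fuzz $\epsilon/100$ versus $2\epsilon$ more carefully. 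I expect this case bookkeeping, making sure the final count is $O(\epsilon^{1/4}N^{1/2})$, to be the delicate part, with the slack $\epsilon$ versus $\epsilon^{1/4}$ absorbing the errors.
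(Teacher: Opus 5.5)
There is a genuine gap, in exactly the slow-drift case you flag as delicate. Your criterion for a possibly bad $S_u$, namely $\|bc_u\|_{\mathbb{Z}}\le 2\epsilon$, is a \emph{proximity} condition. Counting the $u$ that satisfy it therefore picks up a dwell term $\epsilon/(|b\delta|N^{1/2})$ per wrap. Your case hypothesis $|b\delta|N^{1/2}\ll\epsilon$ bounds this term from \emph{below}, not above, and it is unbounded as $\delta\to 0$. Concretely, if $|b\delta|N\le\epsilon$ and $\|b\beta_i\|_{\mathbb{Z}}\le\epsilon$, every $u\in[N^{1/2}]$ satisfies your criterion, and you get no bound at all. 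Your fallback (``either all intervals are good, or use the fuzz $\epsilon/100$ versus $2\epsilon$ more carefully'') does not repair this. For small nonzero $\delta$, the interval during which the drifting point passes through $0$ can genuinely be bad, so ``all good'' is false. No refinement of the fuzz turns a proximity count into an $O(\epsilon^{1/4}N^{1/2})$ bound.

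The missing idea is to count \emph{crossings of $0$} rather than proximity to $0$.
\begin{itemize}
\item Fix a residue $r\bmod b$. The values $\alpha_i n+\beta_i \bmod 1$ with $n\in S_u$ and $n\equiv r\pmod b$ equal $\delta n+ar/b+\beta_i$. They therefore lie in an interval $I_u^{(r)}$ of length $|\delta|N^{1/2}$.
\item Suppose $|\delta|N^{1/2}<\epsilon^{1/2}$. Distinct $I_u^{(r)}$ are separated by gaps of length at least $b^{-1}-\epsilon^{1/2}\ge\epsilon^{1/4}-\epsilon^{1/2}$, which is much larger than $\epsilon$. So at most one of them meets $[-\epsilon,\epsilon]$.
\item If that interval does not contain $0$, it lies on one side of $0$. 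Then $S_u$ misses one of the two windows entirely, so it is $i$-good.
\item For fixed $r$, the intervals $I_u^{(r)}$, $u=1,\dots,N^{1/2}$, are consecutive abutting pieces of a single path of length $N|\delta|$. Hence $0$ lies in at most $2\lceil N|\delta|\rceil+O(1)$ of them, with no dwell factor.
\item Summing over $b\le T=\epsilon^{-1/4}$ residues gives $O(\epsilon^{1/4}N^{1/2})$ possibly bad intervals, uniformly as $\delta\to 0$.
\end{itemize}
The case $\delta=0$ is immediate: all values then lie in the fixed set $b^{-1}\mathbb{Z}+\beta_i$, whose spacing is at least $\epsilon^{1/4}$.

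Your fast-drift case also puts the threshold in the wrong place. If $|b\delta|N^{1/2}\gg\epsilon$, it does not follow that the range of $bc$ over $S_u$ is much larger than $b\epsilon$. When that range is less than $1$ there is only a partial wrap. The fraction of blocks with $\|bc_v\|_{\mathbb{Z}}\le 2b\epsilon$ is then about $\epsilon/(|\delta|N^{1/2})$, not $O(b\epsilon)$. The proportion of $n$ landing in $[0,\epsilon)$ is therefore $\ll\epsilon+\epsilon/(b|\delta|N^{1/2})$. With $|b\delta|N^{1/2}=C\epsilon$ this is only $O(1/C)$, not at most $\epsilon^{1/4}$.

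The split should instead be at $|\delta|N^{1/2}=\epsilon^{1/2}$, say; there your fast-case computation gives proportion $\ll\epsilon^{1/2}$ in every residue class. The slow case must then handle all $|\delta|N^{1/2}<\epsilon^{1/2}$, including arbitrarily small $\delta$. The crossing count does this; the proximity count does not.

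Two pieces of your setup are unnecessary. The case $b\epsilon\ge 1/2$ never occurs, since $b\le\epsilon^{-1/4}$. The refinement into blocks of length $K$ is also not needed: it suffices to work with the residue classes mod $b$ inside each $S_u$.
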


\begin{proof}
    Recall that we assume throughout that $N$ is sufficiently large in an absolute sense, and $\epsilon = \epsilon(N)\rightarrow 0$ as $N\rightarrow\infty$. Therefore, we may implicitly assume that $\epsilon$ is sufficiently small in an absolute sense. 

    Let $\alpha_i = a/b + \delta$, where $|\delta|\leq N^{-1/3}$ and $b\leq T$. We partition each interval $S_u$ of integers into $b$ sets $S_u^{(r)}$ by their residue $r = 0, \dots, b - 1$ modulo $b$. We split into three cases based on the size of $|\delta|$. 
    \begin{itemize}
        \item When $|\delta|\geq \epsilon^{1/2}N^{-1/2}$, we show that in fact any $S_u$ is $i_+$-good. It suffices to show for each residue $r\mod b$ that 
        \begin{equation*}
            \mathbb{P}_{n\sim U(S_u^{(r)})}(\alpha_i n + \beta_i\in [0, \epsilon)\mod 1)\ll \epsilon^{1/4}
        \end{equation*}
        Note however, for $n\in S_u^{(r)}$
        \begin{equation*}
            \alpha_i n + \beta_i\equiv \delta n + ar/b + \beta_i\mod 1
        \end{equation*}
        Therefore, the set of $n\in S_u^{(r)}$ for which 
        \begin{equation*}
            \alpha_i n + \beta_i\in [0, \epsilon)\mod 1
        \end{equation*}
        may be partitioned into at most $\ceil{|\delta| N^{1/2}} + 1$ arithmetic progressions of common difference $b$ each of length at most $\ceil{\epsilon/(b|\delta|)}$. Therefore, 
        \begin{equation*}
            \mathbb{P}_{n\sim U(S_u^{(r)})}(\alpha_i n + \beta_i\in [0, \epsilon)\mod 1)\ll \frac{1}{\left|S_u^{(r)}\right|}\left(\ceil{|\delta| N^{1/2}} + 1\right)\cdot \ceil{\frac{\epsilon}{b|\delta|}}. 
        \end{equation*}
        Since $\epsilon/(b|\delta|)\gg 1$ and $|\delta| N^{1/2}\gg \epsilon^{1/2}$, we have
        \begin{align*}
            \ceil{\frac{\epsilon}{b|\delta|}}\ll \frac{\epsilon}{b|\delta|}, &\qquad \ceil{|\delta| N^{1/2}} + 1\ll \epsilon^{-1/2}|\delta| N^{1/2}. 
        \end{align*}
        Hence
        \begin{equation*}
            \mathbb{P}_{n\sim U(S_u^{(r)})}(\alpha_i n + \beta_i\in [0, \epsilon)\mod 1)\ll \frac{b}{N^{1/2}}\cdot \epsilon^{-1/2}|\delta| N^{1/2}\cdot \frac{\epsilon}{b|\delta|} = \epsilon^{1/2}\lll \epsilon^{1/4}. 
        \end{equation*}
        
        \item We now deal with the case $0<|\delta| < \epsilon^{1/2}N^{-1/2}$. In this case, the values taken by 
        \begin{equation*}
            \alpha_i n + \beta_i\equiv \delta n + ar/b + \beta_i\mod 1
        \end{equation*}
        for $n\in S_u^{(r)}$ is contained in the interval $I_u^{(r)}: = [\delta (u - 1) N^{1/2} + ar/b + \beta_i, \delta u N^{1/2} + ar/b + \beta_i]\mod 1$ of length $|\delta| N^{1/2}\leq \epsilon^{1/2}$, where we reverse the endpoints of the interval if $\delta < 0$. We claim that if $S_u$ is $i$-bad then $0\in I_u^{(r)}$ for some $r\mod b$. We first see that this suffices, since $0\in I_u^{(r)}$ for at most $2\ceil{N|\delta|}\ll \epsilon^{1/2}N^{1/2}$ values of $u$ for each $r\mod b$, hence for at most $O(\epsilon^{1/4}N^{1/2})$ values of $u\in [N^{1/2}]$. 
    
        We now prove this claim. Assume that $0\notin I_u^{(r)}$ for any $r\mod b$. We first note that the intervals $I_u^{(r)}$ are separated by gaps of length $b^{-1} - |\delta| N^{1/2}\geq \epsilon^{1/4} - \epsilon^{1/2}\ggg \epsilon$. Therefore, there is at most one choice of $r_0$ for which $I_u^{(r_0)}$ intersects $[-\epsilon, \epsilon]\mod 1$. Since $0\notin I_u^{(r_0)}$, $I_u^{(r_0)}$ must be disjoint from $[0, \epsilon]$ or from $[-\epsilon, 0]$ modulo 1. Hence
        \begin{equation*}
            \{\alpha_i n + \beta_i|n\in S_u\}\subseteq \bigcup_r I_u^{(r)} \mod 1
        \end{equation*}
        must be disjoint from $[0, \epsilon]$ or from $[-\epsilon, 0]$ modulo 1. Therefore, $S_u$ is $i$-good. 
    
        \item Finally, when $\delta = 0$, we shall show that any $S_u$ is $i$-good. For any $u$, we have
        \begin{equation*}
            \{\alpha_i n + \beta_i|n\in S_u\}\subseteq b^{-1}\mathbb{Z} + \beta_i. 
        \end{equation*}
        Since $b^{-1}\geq \epsilon^{1/4}\ggg \epsilon$, the set $b^{-1}\mathbb{Z} + \beta_i$ is disjoint from at least one of $[0, \epsilon)$ and $(-\epsilon, 0)$ modulo 1. Therefore, $S_u$ is $i$-good. \qedhere
    \end{itemize}
\end{proof}

\begin{lem} \label{minor arcs}
    If $\alpha_i\in \mathfrak{m}$, then all intervals $S_1, \dots, S_{N^{1/2}}$ are $i$-good. 
\end{lem}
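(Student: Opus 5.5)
We show directly that every $S_u$ is in fact $i_+$-good, by bounding the proportion of $n\in S_u$ with $\alpha_i n + \beta_i\in[0,\epsilon)\bmod 1$ via equidistribution on a length-$N^{1/2}$ interval. Dirichlet's approximation theorem with parameter $Q = N^{1/2}$ gives coprime $a, b$ with $1\le b\le N^{1/2}$ and
\begin{equation*}
    \left|\alpha_i - \frac{a}{b}\right|\le \frac{1}{bN^{1/2}}.
\end{equation*}
Since $\alpha_i\in\mathfrak{m}$, we cannot have $b\le T$: that would give $|\alpha_i - a/b|\le N^{-1/2}\le N^{-1/3}$, placing $\alpha_i$ in $\mathfrak{M}$. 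Hence $T < b\le N^{1/2}$.

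With such a rational approximation, the Erd\H{o}s--Tur\'an inequality gives the discrepancy of $\{\alpha_i n+\beta_i\}_{n\in S_u}$. Fix $H = \epsilon^{-1/2}$, say. The discrepancy $D$ then satisfies
\begin{equation*}
    D\ll \frac{1}{H} + \sum_{h=1}^{H}\frac{1}{h}\left|\frac{1}{N^{1/2}}\sum_{n\in S_u}e(h\alpha_i n)\right|\ll \frac{1}{H} + \frac{1}{N^{1/2}}\sum_{h=1}^{H}\frac{1}{h}\min\left(N^{1/2}, \frac{1}{\|h\alpha_i\|_{\mathbb{Z}}}\right).
\end{equation*}
The shift $\beta_i$ and the position of $S_u$ only change the phase, so the bound is uniform in $u$. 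The probability in Definition~\ref{igood} is then at most $\epsilon + D$, and it suffices to show $D\ll \epsilon^{1/2} + T^{-1}\log(\cdots)$, which is $\lll\epsilon^{1/4} = T^{-1}$ up to the care discussed below.

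The main step is bounding the exponential sum. The standard estimate
\begin{equation*}
    \sum_{h\le H}\min\left(X,\frac{1}{\|h\alpha\|_{\mathbb{Z}}}\right)\ll \left(\frac{H}{b}+1\right)(X + b\log b),
\end{equation*}
valid when $|\alpha - a/b|\le b^{-2}$, applied with the dyadic weighting $1/h$, gives roughly
\begin{equation*}
    D\ll \frac1H + \left(\frac{1}{b}+\frac{b}{N^{1/2}}\right)\log N .
\end{equation*}
When $b\le N^{1/2}/T$, this is $O(T^{-1}\log N)$, but that is not good enough, since $\log N$ is large compared with $T = J^{1/8}$.

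So the harder part is to do this more carefully, exploiting that $H$ is small and $b > T$. For $h\le H < b$ with $T$ large relative to $H$ (we have $H = \epsilon^{-1/2} = T^{2}$, so this needs adjusting), we use $\|h\alpha_i\|\ge \|ha/b\| - h/(bN^{1/2})\ge 1/(2b)$. The small $h$ near multiples of $b$ are where one must be careful.

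A cleaner alternative, which I would try first, avoids Erd\H{o}s--Tur\'an altogether and works with the three-distance/gap structure. Split $S_u$ into blocks of $b$ consecutive integers. On each block, the points $\alpha_i n+\beta_i \bmod 1$ are within $b\cdot\frac{1}{bN^{1/2}}\cdot b\le b/N^{1/2}$ (total drift over the block) of a shifted copy of $b^{-1}\mathbb{Z}$. Each block therefore contributes at most $\epsilon b + 1 + b^2/N^{1/2}\cdot b$-type counts to $[0,\epsilon)$.

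More precisely, over a block of length $b$ the values are $n a/b + \beta_i + O(1/N^{1/2})$, since $|n-n_0|\,|\delta|\le b/(bN^{1/2})$. They form a perturbed full residue system mod $1/b$, so at most $\epsilon b + 2$ of them land in $[0,\epsilon)$. Summing over the $N^{1/2}/b$ blocks, plus one partial block of size less than $b$, gives a proportion of at most
\begin{equation*}
    \epsilon + \frac{2}{b} + \frac{b}{N^{1/2}}.
\end{equation*}
This is $\ll \epsilon + T^{-1} + b/N^{1/2}$. The remaining obstacle is large $b$, close to $N^{1/2}$, where the partial block dominates. To handle it, I would rerun Dirichlet with $Q = N^{1/2}T$ and use the block argument with drift $\le 1/(QT)$ per block. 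For the inequality to land strictly below $\epsilon^{1/4}$, I would adjust the constants, e.g.\ by comparing against $\epsilon^{1/4}$ with $T = \epsilon^{-1/4}$ and using $b\ge T$ generously, tightening $T^{-1}$ by replacing $b>T$ with $b>T^2$. That replacement is legitimate because the major arcs include denominators up to $T$ with width $N^{-1/3}\gg N^{-1/2}$, so Dirichlet with $Q=N^{1/3}$-scale forces $b>T$. Balancing these choices is the step I expect to require the most care.
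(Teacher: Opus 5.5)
Your route differs from the paper's. The paper majorises $1_{[0,\epsilon)}$ by a periodised Gaussian, expands in Fourier series, and uses $\|m\alpha_i\|_{\mathbb{Z}}\ge |m|N^{-1/3}$ for $0<|m|\le T$; you count directly. Your setup is sound. Dirichlet with $Q=N^{1/2}$ plus the minor-arc hypothesis gives coprime $a,b$ with $T<b\le N^{1/2}$ and $|\alpha_i-a/b|<1/(bN^{1/2})$. So on any run of at most $b$ consecutive integers starting at $n_0$, the points $\alpha_i n+\beta_i$ lie within $N^{-1/2}$ of \emph{distinct} points of $s+b^{-1}\mathbb{Z}$ modulo $1$, where $s=\alpha_i n_0+\beta_i$. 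However, the proposal never reaches the required inequality, and the remedies you suggest do not work. Write $P_+$, $P_-$ for the two probabilities in Definition~\ref{igood}. The term $b/N^{1/2}$ in your bound $\epsilon+2/b+b/N^{1/2}$ is an artifact of bounding the last partial block by its length. That block is also a run of fewer than $b$ consecutive integers, so like every other run it contributes at most $(\epsilon+2N^{-1/2})b+1$ points. Summing over the $\lceil N^{1/2}/b\rceil$ runs and using $b\le N^{1/2}$ gives $P_+\le 1/b+2\epsilon+5N^{-1/2}$. So there is no large-$b$ obstacle, and no need to rerun Dirichlet with $Q=N^{1/2}T$.

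The real issue is the constant. Definition~\ref{igood} demands $\le\epsilon^{1/4}=T^{-1}$ with no implied constant, and $P_+$ can genuinely be of size $1/b$ with $b$ just above $T$. Take $|\alpha_i-a/b|\le N^{-1}$ with $a/b$ in lowest terms and $b=\lfloor T\rfloor+1$. This is compatible with $\alpha_i\in\mathfrak{m}$, since every rational with denominator at most $T$ is then at distance at least $1/(T(T+1))-N^{-1}\gg N^{-1/3}$ from $\alpha_i$. If some $n\in S_u$ has $\alpha_i n+\beta_i\in[\epsilon/4,\epsilon/2]$ modulo $1$, then every $n'\equiv n\pmod b$ in $S_u$ has $\alpha_in'+\beta_i\in[0,\epsilon)$ modulo $1$, so $P_+\ge 1/b-N^{-1/2}$. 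Consequences:
\begin{itemize}
\item Your Erd\H{o}s--Tur\'an target $D\lll\epsilon^{1/4}$ is unattainable.
\item A bound like $2/b+\epsilon$ is insufficient, and even $1/b+2\epsilon+5N^{-1/2}$ need not be below $T^{-1}$ when $T$ is just below an integer.
\item ``Replacing $b>T$ by $b>T^2$'' is not legitimate: $\mathfrak{m}$ only excludes denominators up to $T$, and the example above lies in $\mathfrak{m}$.
\end{itemize}
A clean fix is to use the ``or'' in $i$-good. If $b\le 1/(3\epsilon)$, distinct points of $s+b^{-1}\mathbb{Z}$ are at least $3\epsilon>2\epsilon+2N^{-1/2}$ apart modulo $1$. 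So each run contains at most one $n$ with $\alpha_in+\beta_i\in(-\epsilon,\epsilon)$ modulo $1$. Hence $P_++P_-\le 1/b+N^{-1/2}$, and $\min(P_+,P_-)\le \frac{1}{2T}+N^{-1/2}<\epsilon^{1/4}$. If $b>1/(3\epsilon)$, the count above gives $P_+\le 5\epsilon+5N^{-1/2}<\epsilon^{1/4}$. The same example shows the paper's own intermediate claim $P_+\ll\epsilon$ on $\mathfrak{m}$ is too strong. There, the discarded tail $\epsilon\sum_{|m|>T}e^{-2\pi^2\epsilon^2m^2}$ is $\asymp 1$, since $\epsilon T=\epsilon^{3/4}\to 0$. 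So a counting argument like yours, completed as above, is the right way to prove this lemma.
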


\begin{proof}
    We prove that, in fact, any $S_u$ is $i^+$-good. Since $\alpha_i\in\mathfrak{m}$, $\|\alpha_i m\|_\mathbb{Z}\geq mN^{-1/3}$ for any positive integer $m\leq T$. We employ Fourier theoretic techniques on $\mathbb{R}/\mathbb{Z}$ to upper bound the probability
    \begin{equation*}
        \mathbb{P}_{n\sim U(S_u)}\left(\alpha_i n + \beta_i\in [0, \epsilon)\mod 1\right). 
    \end{equation*}
    Let 
    \begin{equation*}
        g_\epsilon(x) = \sum_{m'\in \mathbb{Z}}e^{-\frac{(x - m')^2}{2\epsilon^2}} = \sqrt{2\pi}\cdot\epsilon\sum_{m\in \mathbb{Z}}e(mx)e^{-2\pi^2\epsilon^2 m^2}
    \end{equation*}
    denote a $1$-periodised Gaussian. We have 
    \begin{align*}
        &\mathbb{P}_{n\sim U(S_u)}\left(\alpha_i n + \beta_i\in [0, \epsilon)\mod 1\right)\\
        &\ll N^{-1/2}\sum_{n \in S_u} g_\epsilon(\alpha_i n + \beta)\\
        & = \sqrt{2\pi}\epsilon N^{-1/2}\sum_{m\in\mathbb{Z}}e^{-2\pi^2\epsilon^2 m^2} \sum_{n \in S_u} e(m(\alpha_i n + \beta))\\
        &\ll \epsilon N^{-1/2}\sum_{m\in \mathbb{Z}} e^{-2\pi^2\epsilon^2m^2}\cdot \left|\frac{e^{2\pi i N^{1/2} \alpha_i m} - 1}{(e^{2\pi i \alpha_i m} - 1)}\right|\\
        &\ll \epsilon \sum_{m\in \mathbb{Z}}  \frac{e^{-2\pi^2\epsilon^2m^2}}{\max(N^{1/2}\|\alpha_i m\|_\mathbb{Z}, 1)}\\
        &\ll \epsilon + \epsilon \sum_{0<|m|\leq T}  \frac{e^{-2\pi^2\epsilon^2m^2}}{N^{1/2}\|\alpha_i m\|_\mathbb{Z}} + \epsilon \sum_{|m| > T}  e^{-2\pi^2\epsilon^2m^2}\\
        &\ll \epsilon + N^{-1/6} + e^{-2\pi^2\epsilon^2T^2}\\
        &\ll \epsilon\\
        &\lll \epsilon^{1/4}. \qedhere
    \end{align*}
\end{proof}

We combine Lemma~\ref{major arcs} for the major arcs and Lemma~\ref{minor arcs} for the minor arcs. 

\begin{cor} \label{both arcs}
    There are at most 
    \begin{equation*}
        O(\epsilon^{1/4}N^{1/2})
    \end{equation*}
    $i$-bad intervals amongst $S_1, \dots, S_{N^{1/2}}$. 
\end{cor}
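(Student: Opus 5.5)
This follows from a dichotomy on $\alpha_i$: every real number lies either in the major arcs $\mathfrak{M}$ or in the minor arcs $\mathfrak{m} = \mathbb{R}\backslash\mathfrak{M}$, since these partition $\mathbb{R}$. We fix $i\in[k]$ and split into these two cases.

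If $\alpha_i\in\mathfrak{M}$, we invoke Lemma~\ref{major arcs} directly. It gives at most $O(\epsilon^{1/4}N^{1/2})$ $i$-bad intervals amongst $S_1, \dots, S_{N^{1/2}}$.

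If $\alpha_i\in\mathfrak{m}$, Lemma~\ref{minor arcs} shows that every interval $S_u$ is $i$-good. So there are no $i$-bad intervals, and the bound holds trivially.

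Neither case has any real difficulty, but two points need care:
\begin{itemize}
\item The implied constant must be uniform in $N$, even though which case applies may change as $N$ varies (the arcs $\mathfrak{M}_N$ depend on $N$ through $T$). This is fine because each lemma's constant is absolute and does not depend on the case split.
\item Both lemmas assume $N$ is sufficiently large in an absolute sense (for instance, so that $\epsilon$ is small and the major arcs are disjoint). This is already the standing assumption throughout the section.
\end{itemize}

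Summing over the finitely many $i\in[k]$ then also bounds the total number of bad intervals by $O(k\,\epsilon^{1/4}N^{1/2})$, which is the form this corollary will presumably be used in later.
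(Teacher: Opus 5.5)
Your proposal is correct and follows the paper's proof exactly: since $\mathfrak{m}=\mathbb{R}\backslash\mathfrak{M}$, you apply Lemma~\ref{major arcs} when $\alpha_i\in\mathfrak{M}$, and Lemma~\ref{minor arcs} (which gives no $i$-bad intervals at all) when $\alpha_i\in\mathfrak{m}$. Your remarks on the uniformity of the implied constant in $N$, and on summing over the $k$ indices to get $|B|\ll\epsilon^{1/4}N^{1/2}$, match how the corollary is used later in the paper.
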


\begin{df}
    Let $G = G_N\subseteq [N^{1/2}]$ denote the set of $u\in [N^{1/2}]$ for which $S_u$ is good, and let $B = B_N\subseteq [N^{1/2}]$ denote the set of $u\in [N^{1/2}]$ for which $S_u$ is bad. 
\end{df}

We decompose $E$ to investigate the contribution from each interval $S_u$. 

\begin{df}
    For $u \in [N^{1/2}]$, let 
    \begin{equation*}
        E_u = E_u(p_{ij}) := \mathbb{E}_{n\sim U[S_u]}\left(\prod_{ij} (1_{p_{ij}\mid \floor{\alpha_i n + \beta_i}} - p_{ij}^{-1})\right). 
    \end{equation*}
\end{df}

We may write
\begin{equation*}
    E = N^{-1/2}\sum_{u = 1}^{N^{1/2}}E_u. 
\end{equation*}

For $u\in G$, we are ready to take a suitable continuous approximant for $1_{p\mid \floor{\alpha_i n + \beta_i}}$ over $n\in S_u$, and, in doing so, produce an approximant of $E_u$. 

\begin{df} \label{E''u}
    For $u\in G$, let
    \begin{equation*}
        E''_u = E''_u(p_{ij}):= \mathbb{E}_{n\sim U(S_u)}\left(\prod_{ij} (\chi_{p_{ij}, \pm}(\alpha_i n + \beta_i) - p_{ij}^{-1})\right), 
    \end{equation*}
    where we take $\chi_{p_{ij}, +}$ if $S_u$ is $i_+$-good and $\chi_{p_{ij}, -}$ otherwise. Note that as $S_u$ is $i$-good, in the latter case $S_u$ must be $i_-$-good. 
\end{df}

\begin{lem} \label{E_u and E''_u}
    For any $u\in G$, we have
    \begin{equation*}
        E_u = E''_u + O\left(\epsilon^{1/4}\right). 
    \end{equation*}
\end{lem}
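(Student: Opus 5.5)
The plan is to compare the two products pointwise in $n\in S_u$. For each $i$ and each $n$, we examine the factors $1_{p_{ij}\mid\floor{\alpha_i n+\beta_i}}-p_{ij}^{-1}$ and $\chi_{p_{ij},\pm}(\alpha_i n+\beta_i)-p_{ij}^{-1}$. The first task is to understand exactly where $\chi_{p,\pm}$ differs from $1_{p\mid\floor{x}}$.

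Write $\chi_+(x)=\epsilon^{-1}\int_0^\epsilon 1_{[0,1]}(x-t)\,dt$. This equals $1$ for $x\in[\epsilon,1]$ and $0$ for $x\notin[0,1+\epsilon]$, and it takes values in $[0,1]$. Hence $\chi_+$ agrees with $1_{[0,1)}$ except on $[0,\epsilon)\cup[1,1+\epsilon)$, which is $[0,\epsilon)\bmod 1$ shifted by integers. Similarly, $\chi_-$ agrees with $1_{[0,1)}$ except on $(-\epsilon,0)\cup(1-\epsilon,1)$, i.e. on $(-\epsilon,0)\bmod 1$.

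Periodising with period $p\geq 2$, the supports of the translates $\chi_\pm(\cdot-pm)$ are disjoint (each has length at most $1+\epsilon<p$). It follows that $\chi_{p,\pm}(x)\in[0,1]$, and that $\chi_{p,+}(x)=1_{p\mid\floor{x}}$ unless $x\in[0,\epsilon)\bmod 1$. Likewise, $\chi_{p,-}(x)=1_{p\mid\floor{x}}$ unless $x\in(-\epsilon,0)\bmod 1$. The point is that the exceptional set depends only on $x\bmod 1$, not on $p$.

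Now fix $u\in G$ and, for each $i$, the sign chosen in Definition~\ref{E''u}. Let $X_u\subseteq S_u$ be the set of $n$ for which some $i\in[k]$ has $\alpha_i n+\beta_i$ in the exceptional set of its chosen sign. For $n\in S_u\setminus X_u$, every factor coincides for all $(i,j)$, simultaneously over all primes $p_{ij}$. So the two products are equal there.

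For $n\in X_u$, every factor in either product has absolute value at most $1$, because $1_{\cdot}$ and $\chi_{p,\pm}$ lie in $[0,1]$ and $p^{-1}\leq 1$. Hence the two products differ by at most $2$. This gives
\begin{equation*}
|E_u-E''_u|\leq 2\,\mathbb{P}_{n\sim U(S_u)}(n\in X_u)\leq 2\sum_{i=1}^k \mathbb{P}_{n\sim U(S_u)}\bigl(\alpha_i n+\beta_i\in[0,\epsilon)\text{ or }(-\epsilon,0)\bmod 1\bigr).
\end{equation*}
In each summand we take whichever interval matches the chosen sign. By Definition~\ref{igood}, each summand is at most $\epsilon^{1/4}$, since $S_u$ is $i_+$-good or $i_-$-good accordingly. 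Therefore $|E_u-E''_u|\leq 2k\epsilon^{1/4}=O(\epsilon^{1/4})$, treating $k$ as an absolute constant.

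The only step needing care is verifying that the pointwise agreement of $\chi_{p,\pm}$ with $1_{p\mid\floor{x}}$ holds, with the error set independent of $p$. This rests on the disjointness of the translates for $p\geq 2$, together with the boundary conventions (closed intervals in the convolution versus the half-open $[0,1)$). Those conventions only affect the endpoint sets $\{0\}$, $\{1\}$, $\{1\pm\epsilon\}$, and these already lie inside the exceptional sets, except for $x\equiv\epsilon \bmod 1$. At that point $\chi_+(\epsilon)=1$ and $\chi_+(1+\epsilon)=0$, which is still correct, so the argument goes through.
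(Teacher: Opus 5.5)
Your proposal is correct and follows the same route as the paper. Off the exceptional sets $[0,\epsilon)\bmod 1$ (resp.\ $(-\epsilon,0)\bmod 1$) the approximants $\chi_{p,\pm}$ agree with $1_{p\mid\floor{x}}$, every factor lies in $[-1,1]$ so the two products differ by at most $2$, and the $i_\pm$-goodness of $S_u$ from Definition~\ref{igood} together with a union bound gives $O(\epsilon^{1/4})$.

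You also supply the pointwise verification that the paper leaves implicit. Your observation that the exceptional set is independent of $p$ lets you union-bound over $i\in[k]$ rather than over all $\ell$ pairs $(i,j)$. The only slip is in your final endpoint remark, which checks $\chi_+$ only: for $\chi_-$, the points $x\equiv 0$ and $x\equiv-\epsilon \bmod 1$ lie outside $(-\epsilon,0)\bmod 1$. A direct check ($\chi_-(0)=1$, $\chi_-(1)=0$, $\chi_-(-\epsilon)=0$, $\chi_-(1-\epsilon)=1$) shows agreement there, consistent with your earlier correct description of where $\chi_-$ differs from $1_{[0,1)}$.
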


\begin{proof}
    We claim that for the choice of $\chi_{p_{ij}, \pm}$ in Definition~\ref{E''u}, 
    \begin{equation*}
        \mathbb{P}_{n\sim U[S_u]}\left(\chi_{p_{ij}, \pm}(\alpha_i n + \beta_i)\neq 1_{p_{ij}\mid \floor{\alpha_i n + \beta_i}}\right)\ll \epsilon^{1/4}. 
    \end{equation*}
    This follows from Definition~\ref{igood} of $i_\pm$-good intervals, once we note that we only take the choice $\chi_{p_{ij}, +}$ when $S_u$ is $i_+$-good, and the choice $\chi_{p_{ij}, -}$ when $S_u$ is $i_-$-good. Therefore, 
    \begin{equation*}
        \mathbb{P}_{n\sim U[S_u]}\left(\prod_{ij}(\chi_{p_{ij}, \pm}(\alpha_i n + \beta_i) - p_{ij}^{-1})\neq \prod_{ij}(1_{p_{ij}\mid \floor{\alpha_i n + \beta_i}} - p_{ij}^{-1})\right)\ll \ell \epsilon^{1/4}\ll \epsilon^{1/4}. 
    \end{equation*}
    Since $\prod_{ij}(\chi_{p_{ij}, \pm}(\alpha_i n + \beta_i) - p_{ij}^{-1})\in [-1, 1]$ and $\prod_{ij}(1_{p_{ij}\mid \floor{\alpha_i n + \beta_i}} - p_{ij}^{-1})\in [-1, 1]$, we have
    \begin{equation*}
        \left|\prod_{ij}(\chi_{p_{ij}, \pm}(\alpha_i n + \beta_i) - p_{ij}^{-1}) - \prod_{ij}(1_{p_{ij}\mid \floor{\alpha_i n + \beta_i}} - p_{ij}^{-1})\right|\leq 2. 
    \end{equation*}
    Therefore, 
    \begin{align*}
        |E_u - E''_u|&\leq \mathbb{E}_{n\sim U(S_u)}\left(\left|\prod_{ij}(\chi_{p_{ij}, \pm}(\alpha_i n + \beta_i) - p_{ij}^{-1}) - \prod_{ij}(1_{p_{ij}\mid \floor{\alpha_i n + \beta_i}} - p_{ij}^{-1})\right|\right)\\
        &\leq 2\cdot \mathbb{P}_{n\sim U[S_u]}\left(\prod_{ij}(\chi_{p_{ij}, \pm}(\alpha_i n + \beta_i) - p_{ij}^{-1})\neq \prod_{ij}(1_{p_{ij}\mid \floor{\alpha_i n + \beta_i}} - p_{ij}^{-1})\right)\\
        &\ll\epsilon^{1/4}. \qedhere
    \end{align*}
\end{proof}

Combining the approximants $E''_u$ of $E_u$ for $u\in G$, we recover an approximant of $E$. 

\begin{df}
    Let
    \begin{equation*}
        E'' = E''(p_{ij}) := N^{-1/2}\sum_{u\in G}E_u. 
    \end{equation*}
\end{df}

\begin{lem} \label{E and E''}
    We have
    \begin{equation*}
        E = E'' + O(\epsilon^{1/4}). 
    \end{equation*}
\end{lem}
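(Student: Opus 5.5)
The plan is to split the average $E = N^{-1/2}\sum_{u=1}^{N^{1/2}} E_u$ according to the partition $[N^{1/2}] = G\sqcup B$ into good and bad intervals. Bad intervals are controlled by counting them, and good intervals by comparing $E_u$ with its smoothed counterpart $E''_u$. I read the definition of $E''$ as $N^{-1/2}\sum_{u\in G}E''_u$, which is presumably what is intended. With the literal definition $N^{-1/2}\sum_{u\in G}E_u$ the argument below is the same, except that the comparison step for good intervals becomes vacuous.

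First, the bad intervals. Each factor satisfies $1_{p_{ij}\mid\floor{\alpha_i n+\beta_i}} - p_{ij}^{-1}\in[-1,1]$, so $|E_u|\le 1$ for every $u$. An interval is bad exactly when it is $i$-bad for some $i\in[k]$. A union bound over $i$ together with Corollary~\ref{both arcs} therefore gives
\begin{equation*}
    |B|\le \sum_{i=1}^k \#\{u : S_u \text{ is } i\text{-bad}\} \ll k\,\epsilon^{1/4}N^{1/2}\ll \epsilon^{1/4}N^{1/2},
\end{equation*}
since $k$ is an absolute constant. Consequently
\begin{equation*}
    \Bigl|N^{-1/2}\sum_{u\in B}E_u\Bigr|\le N^{-1/2}|B|\ll \epsilon^{1/4}.
\end{equation*}

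Second, the good intervals. Lemma~\ref{E_u and E''_u} gives $E_u = E''_u + O(\epsilon^{1/4})$ for each $u\in G$, with an implied constant that is uniform in $u$ and in the tuple $(p_{ij})$; it depends only on $\ell$, which is fixed. Averaging over $u\in G$ with weight $N^{-1/2}$ and using $|G|\le N^{1/2}$ gives
\begin{equation*}
    N^{-1/2}\sum_{u\in G}E_u = N^{-1/2}\sum_{u\in G}E''_u + O(\epsilon^{1/4}).
\end{equation*}
Adding the contributions of $G$ and $B$ then yields $E = E'' + O(\epsilon^{1/4})$.

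The argument is routine, and the only point needing care is uniformity. The bound on $|B|$ must hold uniformly over all $u$, and both it and Lemma~\ref{E_u and E''_u} must hold uniformly over the tuples $(p_{ij})$. This uniformity is what later allows the error $O(\epsilon^{1/4})$ to be summed over the at most $R^\ell$ tuples, which is harmless since $R^\ell\epsilon^{1/4} = R^{\ell}J^{-1/8}\to 0$ by Definition~\ref{params}. Both inputs are in fact uniform: Corollary~\ref{both arcs} does not involve the primes at all, and Lemma~\ref{E_u and E''_u} depends on the tuple only through $\ell$.
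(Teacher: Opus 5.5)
Your proof is correct and is essentially the paper's argument: split $E-E''$ over $G$ and $B$, apply Lemma~\ref{E_u and E''_u} on good intervals, and combine $|E_u|\le 1$ with $|B|\ll\epsilon^{1/4}N^{1/2}$ on bad ones. Your reading $E''=N^{-1/2}\sum_{u\in G}E''_u$ is the one the paper's proof actually uses, and your union bound over $i\in[k]$ supplies the step where the paper cites Corollary~\ref{both arcs} directly for $|B|$.
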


\begin{proof}
    We have
    \begin{align*}
        |E - E''| \leq N^{-1/2}\sum_{u\in G}|E_u - E''_u| + N^{-1/2}\sum_{u\in B}|E_u|. 
    \end{align*}
    By Lemma~\ref{E_u and E''_u}, $N^{-1/2}\sum_{u\in G}|E_u - E''_u|\ll N^{-1/2}|G|\epsilon^{1/4}\leq \epsilon^{1/4}$. Therefore, it suffices to show that 
    \begin{equation*}
        N^{-1/2}\sum_{u\in B}|E_u|\ll \epsilon^{1/4}. 
    \end{equation*}
    Since $\prod_{ij} (1_{p_{ij}\mid \floor{\alpha_i n + \beta_i}} - p_{ij}^{-1})\in [-1, 1]$, we have $|E_u|\leq 1$. Combining this observation with Corollary~\ref{both arcs}, which asserts that $|B|\ll \epsilon^{1/4}N^{1/2}$, we have
    \begin{equation*}
        N^{-1/2}\sum_{u\in B}|E_u|\leq N^{-1/2}|B|\ll \epsilon^{1/4}, 
    \end{equation*}
    as desired. 
\end{proof}

\subsection{The Fourier transform} \label{Fourier}

We now evaluate $E''_u$ by taking a suitable Fourier transform. 

\begin{df}
    Let 
    \begin{equation*}
        \phi_\pm(y) = \phi_{\epsilon, \pm}(y) := \widehat{\chi_{\epsilon, \pm}}(y) = \int \chi_{\epsilon, \pm}(x) e(-xy)dx, 
    \end{equation*}
    and let 
    \begin{equation*}
        \theta_u(x) := N^{-1/2}\sum_{n\in S_u}e(nx). 
    \end{equation*}
\end{df}

Note that 
\begin{equation*}
|\phi_+(m)| = |\phi_-(m)| \ll\min(1, m^{-1})\min(1, \epsilon^{-1}m^{-1}), 
\end{equation*}
and therefore, 
\begin{align}
    \sum_{m\in p^{-1}\mathbb{Z}}\frac{1}{p}|\phi_\pm(m)|&\ll \log(\epsilon^{-1}),  \label{Fourier l1 estimate all}\\
    \sum_{m\in p^{-1}\mathbb{Z}, |m|\geq J}\frac{1}{p}|\phi_\pm (m)|&\ll \epsilon^{-1}J^{-1}. \label{Fourier l1 estimate tail}
\end{align}

Note that the restrictions of $\frac{1}{p}\phi_\pm$ on $p^{-1}\mathbb{Z}$ form the Fourier coefficients of the $p$-periodisations $\chi_{p, \pm}$ of $\chi_\pm$. Since 
\begin{equation*}
    \sum_{m\in p^{-1}\mathbb{Z}}\frac{1}{p}|\phi_\pm(m)| < \infty
\end{equation*}
is absolutely convergent, we have the Fourier inversion formulae
\begin{equation} \label{Fourier inversion}
    \sum_{m\in p^{-1}\mathbb{Z}}\frac{\phi_\pm(m)}{p}e(mx) = \chi_{p, \pm}(x). 
\end{equation}

\begin{lem}
    For any $u\in G$, 
    \begin{equation*}
        E''_u = \sum_{m_{ij}\neq 0\in p_{ij}^{-1}\mathbb{Z}}\theta_u\left(\sum_{ij} \alpha_im_{ij}\right)\prod_{ij}\frac{\phi_\pm(m_{ij})e\left(\beta_im_{ij}\right)}{p_{ij}}, 
    \end{equation*}
    where the sign on the right-hand side is taken in accordance with the choice of $\chi_{p_{ij}, \pm}$ in the definition of $E''_u$ (Definition~\ref{E''u}). 
\end{lem}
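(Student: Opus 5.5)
The plan is to mirror the proof of the analogous lemma in Subsection~\ref{BD FT}, with two new ingredients. The first is that the constant term of the Fourier expansion of $\chi_{p,\pm}$ exactly cancels the subtracted $p^{-1}$. The second is that the normalisation of $\theta_u$ matches the uniform measure on $S_u$, since $|S_u| = N^{1/2}$.

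First we compute the zero mode. Since $\chi_\pm$ is the convolution of $1_{[0,1]}$ with a probability density, $\phi_\pm(0) = \int\chi_\pm = 1$, so the $m=0$ term of (\ref{Fourier inversion}) equals $p^{-1}$. Subtracting gives, for each $(i,j)$,
\begin{equation*}
    \chi_{p_{ij},\pm}(x) - p_{ij}^{-1} = \sum_{0\neq m\in p_{ij}^{-1}\mathbb{Z}}\frac{\phi_\pm(m)}{p_{ij}}e(mx).
\end{equation*}
This series converges absolutely by (\ref{Fourier l1 estimate all}), where the bound $|\phi_\pm(m)|\ll \min(1,m^{-1})\min(1,\epsilon^{-1}m^{-1})$ makes the sum finite for fixed $N$. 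The sign $\pm$ here is the one fixed for index $i$ by the choice in Definition~\ref{E''u}. We then substitute $x = \alpha_i n + \beta_i$ and note that $e(m(\alpha_i n+\beta_i)) = e(\beta_i m)\,e(n\alpha_i m)$.

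Next we multiply the $\ell$ absolutely convergent series over $(i,j)$. Because each is absolutely summable, the product is an absolutely convergent multiple sum over tuples $(m_{ij})$ with $0\neq m_{ij}\in p_{ij}^{-1}\mathbb{Z}$, whose general term is
\begin{equation*}
    e\Big(n\sum_{ij}\alpha_i m_{ij}\Big)\prod_{ij}\frac{\phi_\pm(m_{ij})e(\beta_i m_{ij})}{p_{ij}}.
\end{equation*}
We then average over $n\in S_u$, which amounts to applying $N^{-1/2}\sum_{n\in S_u}$. Since this sum over $n$ is finite, it can be interchanged with the absolutely convergent multiple sum by Fubini. The inner average $N^{-1/2}\sum_{n\in S_u}e\big(n\sum_{ij}\alpha_im_{ij}\big)$ is exactly $\theta_u\big(\sum_{ij}\alpha_i m_{ij}\big)$, which gives the stated identity.

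The only genuine point to check is the justification of Fourier inversion (\ref{Fourier inversion}) pointwise for $\chi_{p,\pm}$. This holds because $\chi_\pm$ is continuous, compactly supported and of bounded variation, and its $p$-periodisation has absolutely summable coefficients $p^{-1}\phi_\pm(m)$. Pointwise equality everywhere then follows from continuity. Given that, together with the vanishing-constant-term computation $\phi_\pm(0)=1$, the rest is the same bookkeeping as in Subsection~\ref{BD FT}.
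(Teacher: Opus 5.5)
Your proof is correct and is essentially the paper's argument. You expand each factor $\chi_{p_{ij},\pm}(\alpha_i n+\beta_i)-p_{ij}^{-1}$ via the Fourier inversion formula (\ref{Fourier inversion}), multiply the absolutely convergent series, and interchange with the finite average over $n\in S_u$ to produce $\theta_u$; the paper runs the same chain of equalities starting from the right-hand side. You also make explicit a step the paper leaves implicit: $\phi_\pm(0)=\int\chi_\pm=1$, so removing the $m_{ij}=0$ term exactly accounts for the subtracted $p_{ij}^{-1}$.
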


\begin{proof}
    Starting on the right-hand side, we have, by Fubini's theorem and (\ref{Fourier inversion}), 
    \begin{align*}
        &\sum_{m_{ij}\neq 0\in p_{ij}^{-1}\mathbb{Z}}\theta_u\left(\sum_{ij} \alpha_im_{ij}\right)\prod_{ij}\frac{\phi_\pm(m_{ij})e\left(\beta_im_{ij}\right)}{p_{ij}}\\
        &= N^{-1/2}\sum_{m_{ij}\neq 0\in p_{ij}^{-1}\mathbb{Z}}\sum_{n\in S_u} e\left(n\sum_{ij} \alpha_im_{ij}\right)\prod_{ij}\frac{\phi_\pm(m_{ij})e\left(\beta_im_{ij}\right)}{p_{ij}}\\
        &= N^{-1/2}\sum_{n\in S_u} \prod_{ij}\sum_{m_{ij}\neq 0\in p_{ij}^{-1}\mathbb{Z}}\frac{\phi_\pm(m_{ij})}{p_{ij}}e\left((\alpha_in + \beta_i)m_{ij}\right)\\
        &= N^{-1/2}\sum_{n\in S_u} \prod_{ij}(\chi_{p_{ij}, i}(\alpha_i n + \beta_i) - p_{ij}^{-1})\\
        &= E''_u. \qedhere
    \end{align*}
\end{proof}

We note that
\begin{equation*}
    |\theta_u(x)| = N^{-1/2}\left|\frac{e(N^{1/2}x) - 1}{e(x) - 1}\right|\ll \frac{1}{\max(N^{1/2}\|x\|_\mathbb{Z}, 1)}. 
\end{equation*}
Therefore, for any $u\in G$, 
\begin{equation*}
    \left|E''_u\right|\ll \sum_{m_{ij}\neq 0\in p_{ij}^{-1}\mathbb{Z}}\frac{1}{\max(N^{1/2}\|\sum_{ij} \alpha_im_{ij}\|_\mathbb{Z}, 1)}\prod_{ij}\frac{|\phi_\pm(m_{ij})|}{p_{ij}}, 
\end{equation*}
Substituting this into the definition of $E''$, we arrive at the following corollary. 

\begin{cor} \label{E'' cor}
    \begin{equation*}
        |E''| \ll \sum_{m_{ij}\neq 0\in p_{ij}^{-1}\mathbb{Z}}\frac{1}{\max(N^{1/2}\|\sum_{ij} \alpha_im_{ij}\|_\mathbb{Z}, 1)}\prod_{ij}\frac{|\phi_\pm(m_{ij})|}{p_{ij}}. 
    \end{equation*}
\end{cor}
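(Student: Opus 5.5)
Corollary~\ref{E'' cor} follows by averaging the per-interval bound over the good intervals. The only subtlety is that the sign choice $\pm$ for each $i$ may change with $u$, so the right-hand side must be read with that in mind.

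First, note that the definition of $E''$ should be read with $E''_u$ in place of $E_u$. This is how Lemma~\ref{E and E''} uses it: the error there is controlled by $|E_u - E''_u|$ for $u\in G$. So
\begin{equation*}
    E'' = N^{-1/2}\sum_{u\in G}E''_u.
\end{equation*}

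For each $u\in G$, the preceding lemma gives $E''_u$ as an absolutely convergent Fourier expansion. Absolute convergence holds because of (\ref{Fourier l1 estimate all}), since there are finitely many factors and $|\theta_u|\le 1$. The displayed bound $|\theta_u(x)|\ll 1/\max(N^{1/2}\|x\|_{\mathbb{Z}},1)$ then gives
\begin{equation*}
    |E''_u|\ll \sum_{m_{ij}\neq 0\in p_{ij}^{-1}\mathbb{Z}}\frac{1}{\max(N^{1/2}\|\sum_{ij} \alpha_im_{ij}\|_\mathbb{Z}, 1)}\prod_{ij}\frac{|\phi_{\pm}(m_{ij})|}{p_{ij}}.
\end{equation*}
The right-hand side depends on $u$ only through the choice of signs. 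However, $|\phi_+(m)|=|\phi_-(m)|$: since $\chi_-(x)=\chi_+(x+\epsilon)$, the two transforms differ by the unimodular factor $e(\epsilon y)$. So the bound is in fact independent of $u$, and the notation $|\phi_\pm|$ in the corollary is unambiguous.

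Finally, I apply the triangle inequality to $E''=N^{-1/2}\sum_{u\in G}E''_u$ and use $|G|\le N^{1/2}$. This multiplies the uniform bound by $N^{-1/2}|G|\le 1$, which is exactly the claimed inequality.

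I expect no step to be a genuine obstacle. The only point needing care is the sign uniformity just discussed. The implied constant is absolute because it comes only from the $\theta_u$ bound.
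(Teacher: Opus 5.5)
Your proposal is correct and follows the paper's argument. The paper bounds each $E''_u$ for $u\in G$ through its Fourier expansion together with $|\theta_u(x)|\ll 1/\max(N^{1/2}\|x\|_{\mathbb{Z}},1)$, then averages over $u\in G$, using $|G|\le N^{1/2}$.

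Your two clarifications are both right, and they are what the paper's one-line ``substituting this into the definition of $E''$'' tacitly relies on. First, $E''$ must be read as $N^{-1/2}\sum_{u\in G}E''_u$ rather than with $E_u$, as the proof of the lemma bounding $E-E''$ already assumes. Second, $\chi_-(x)=\chi_+(x+\epsilon)$ gives $|\phi_+|=|\phi_-|$, so the bound does not depend on the sign choice made on each interval.
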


We now isolate the main component of the right-hand side. 

\begin{df}
    Let 
    \begin{equation*}
        E''' = E'''(p_{ij}):= \sum_{\substack{(m_{ij})\in \prod (p_{ij}^{-1}\mathbb{Z}\backslash\mathbb{Z})\\ \sum_{ij}m_{ij}\gamma_i\in\mathbb{Z}}}\prod_{ij}\frac{|\phi_\pm(m_{ij})|}{p_{ij}}. 
    \end{equation*}
\end{df}

\begin{cor} \label{E'' and E'''}
    We have
    \begin{equation*}
        |E''| \ll E''' + J^{-1/3}. 
    \end{equation*}
\end{cor}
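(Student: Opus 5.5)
We follow the proof of the corresponding lemma in Section~\ref{type B and D}, which gave $|E|\ll E'+J^{-1}$, starting from Corollary~\ref{E'' cor}. There are three differences to track. The sum already runs over $m_{ij}\neq 0$. The weights $|\phi_\pm|$ have $\ell^1$ norm only $O(\log(\epsilon^{-1}))$, by (\ref{Fourier l1 estimate all}), rather than $O(1)$. And $\theta_u$ has length $N^{1/2}$, so the gain off the resonant set is $N^{-1/2}\|\cdot\|_{\mathbb{Z}}^{-1}$. We also note that $\phi_\pm$ vanishes on $\mathbb{Z}^{\neq 0}$, because $\widehat{1_{[0,1]}}(m)=0$ for nonzero integers $m$. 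Hence the sum in Corollary~\ref{E'' cor} is effectively over $m_{ij}\in p_{ij}^{-1}\mathbb{Z}\backslash\mathbb{Z}$, which matches the range in the definition of $E'''$.

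\textbf{Step 1: tails.} First we discard tuples with some $|m_{ij}|\geq J$. Bound $1/\max(\cdot,1)\leq 1$. Apply (\ref{Fourier l1 estimate tail}) to the offending coordinate and (\ref{Fourier l1 estimate all}) to the remaining $\ell-1$ coordinates. This contribution is
\begin{equation*}
\ll \ell\,\epsilon^{-1}J^{-1}(\log \epsilon^{-1})^{\ell-1}=J^{-1/2}(\log J)^{O(\ell)}\ll J^{-1/3},
\end{equation*}
using $\epsilon=J^{-1/2}$ and treating $\ell$ as an absolute constant.

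\textbf{Step 2: resonant tuples.} Among the tuples with all $|m_{ij}|<J$, those with $\sum_{ij}\gamma_i m_{ij}\in\mathbb{Z}$ are bounded trivially, again using $1/\max(\cdot,1)\leq 1$. Their total is at most $E'''$.

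\textbf{Step 3: non-resonant tuples.} Here all $|m_{ij}|<J$ and $\sum_{ij}\gamma_i m_{ij}\notin\mathbb{Z}$. Since $L\geq\ell$, Lemma~\ref{key lin alg}(1) in contrapositive gives $\|\sum_{ij}\alpha_i m_{ij}\|_{\mathbb{Z}}> N^{-1/4}$. The factor $1/\max(N^{1/2}\|\cdot\|_{\mathbb{Z}},1)$ is therefore at most $N^{-1/4}$. Summing the remaining weights with (\ref{Fourier l1 estimate all}), this contribution is
\begin{equation*}
\ll N^{-1/4}(\log\epsilon^{-1})^{\ell}\leq N^{-1/4+o(1)},
\end{equation*}
since $J\leq N^{o(1)}$. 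This is certainly $\ll J^{-1/3}$.

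Adding the three contributions gives $|E''|\ll E'''+J^{-1/3}$. The only step needing care is Step~3. There we must check that the threshold $N^{-1/4}$ from Lemma~\ref{key lin alg} still beats the shorter exponential sum of length $N^{1/2}$; it does, leaving the saving $N^{-1/4}$. We must also check that the logarithmic $\ell^1$ losses $(\log J)^{O(1)}$ from $\phi_\pm$ are absorbed. Both follow from $\log J=o(\log N)$ and $\ell=O(1)$. All the other steps are routine bookkeeping.
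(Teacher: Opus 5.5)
Your proposal is correct and follows the same route as the paper's proof. You use the vanishing of $\phi_\pm$ on $\mathbb{Z}\backslash\{0\}$ to restrict to $m_{ij}\notin\mathbb{Z}$, and discard tuples with some $|m_{ij}|\geq J$ via (\ref{Fourier l1 estimate all}) and (\ref{Fourier l1 estimate tail}). You then bound the non-resonant tuples in $(-J,J)$ by $N^{-1/4}(\log\epsilon^{-1})^{\ell}$ using Lemma~\ref{key lin alg}, and bound the resonant ones trivially by $E'''$, exactly as the paper does.
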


\begin{proof}
    We first note that $\phi_\pm$ vanishes on $\mathbb{Z}\backslash\{0\}$, therefore, we may restrict the sum in Corollary~\ref{E'' cor} to $m_{ij}\in p_{ij}^{-1}\mathbb{Z}\backslash\mathbb{Z}$. We first deal with the case when some $|m_{ij}|\geq J$. Here, by (\ref{Fourier l1 estimate all}) and (\ref{Fourier l1 estimate tail}), we have
    \begin{equation*}
        \sum_{\substack{(m_{ij})\in \prod (p_{ij}^{-1}\mathbb{Z}\backslash\mathbb{Z})\\\text{some }|m_{ij}|\geq J}}\prod_{ij}\frac{|\phi_\pm(m_{ij})|}{p_{ij}}\ll \ell (\epsilon^{-1}J^{-1})\log (\epsilon^{-1})^{\ell - 1}\ll J^{-1/3}, 
    \end{equation*}
    where we note that $\epsilon = J^{-1/2}$. Therefore, it suffices to show that
    \begin{equation*}
        \sum_{\substack{(m_{ij})\in \prod (p_{ij}^{-1}\mathbb{Z}\cap (-J, J)\backslash\mathbb{Z})\\ \sum_{ij}m_{ij}\gamma_i \notin\mathbb{Z}}}\frac{1}{\max(N^{1/2}\|\sum_{ij} \alpha_im_{ij}\|_\mathbb{Z}, 1)}\prod_{ij}\frac{|\phi_\pm(m_{ij})|}{p_{ij}}\ll J^{-1/3}. 
    \end{equation*}
    Assume that $N$ is sufficiently large so that $L\geq \ell$. By Lemma~\ref{key lin alg}, for any tuple $(m_{ij})$ in the preceding sum, $\|\sum_{ij}\alpha_im_{ij}\|_\mathbb{Z}\geq N^{-1/4}$. Therefore, by (\ref{Fourier l1 estimate all}), 
    \begin{align*}
        &\sum_{\substack{(m_{ij})\in \prod (p_{ij}^{-1}\mathbb{Z}\cap (-J, J)\backslash\mathbb{Z})\\ \sum_{ij}m_{ij}\gamma_i \notin\mathbb{Z}}}\frac{1}{\max(N^{1/2}\|\sum_{ij} \alpha_im_{ij}\|_\mathbb{Z}, 1)}\prod_{ij}\frac{|\phi_\pm(m_{ij})|}{p_{ij}}\\
        &\ll N^{-1/4}\sum_{\substack{(m_{ij})\in \prod (p_{ij}^{-1}\mathbb{Z}\cap (-J, J)\backslash\mathbb{Z})\\ \sum_{ij}m_{ij}\gamma_i \notin\mathbb{Z}}}\prod_{ij}\frac{|\phi_\pm(m_{ij})|}{p_{ij}}\\
        &\ll \frac{\log(\epsilon^{-1})^\ell}{N^{1/4}}, 
    \end{align*}
    which suffices since $\epsilon^{-1}, J\leq N^{o(1)}$. 
\end{proof}

Combining Lemma~\ref{E and E''} and Corollary~\ref{E'' and E'''}, we arrive at the following upper bound for $|E|$. 

\begin{cor}
    We have
    \begin{equation*}
        |E| \ll E''' + J^{-1/8}. 
    \end{equation*}
\end{cor}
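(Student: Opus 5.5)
The corollary follows by chaining Lemma~\ref{E and E''} with Corollary~\ref{E'' and E'''}. I will only need to compare the sizes of the error terms using the parameter choices $\epsilon = J^{-1/2}$ and $T = J^{1/8}$.

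\textbf{Step 1.} By Lemma~\ref{E and E''},
\begin{equation*}
    |E| \leq |E''| + O(\epsilon^{1/4}) = |E''| + O(J^{-1/8}).
\end{equation*}

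\textbf{Step 2.} I bound $|E''|$ by Corollary~\ref{E'' and E'''}, which gives $|E''| \ll E''' + J^{-1/3}$. One point needs checking here. The definition of $E''$ is written as $N^{-1/2}\sum_{u\in G}E_u$, but the proof of Lemma~\ref{E and E''} plainly uses $E'' = N^{-1/2}\sum_{u\in G}E''_u$. I will read it that way.

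For this reading, the pointwise bound on $|E''_u|$ from Corollary~\ref{E'' cor} must be summed over $u\in G$. Each $u$ carries weight $N^{-1/2}$ and $|G|\leq N^{1/2}$, so the sum stays under control. The choice of signs $\pm$ may vary with $u$ and $i$. This is harmless because $|\phi_+| = |\phi_-|$, so the majorant is the same for every $u$. It is also harmless that the factor $\theta_u$ is normalised by $N^{-1/2}$ rather than $|S_u|^{-1}$: the bound $|\theta_u(x)|\ll 1/\max(N^{1/2}\|x\|_{\mathbb{Z}},1)$ holds uniformly in $u$. Hence the bound of Corollary~\ref{E'' cor}, and therefore that of Corollary~\ref{E'' and E'''}, holds for $E''$ with a constant independent of the good/bad partition.

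\textbf{Step 3.} Combining the two steps gives
\begin{equation*}
    |E| \ll E''' + J^{-1/3} + J^{-1/8}.
\end{equation*}
Since $J\to\infty$, we have $J^{-1/3}\leq J^{-1/8}$, and the stated bound follows.

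The only real obstacle is the bookkeeping in Step 2: making sure the per-interval sign choices and normalisations do not spoil the uniform majorant. The identity $|\phi_+|=|\phi_-|$ resolves this.
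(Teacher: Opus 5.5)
Your proposal is correct and is the paper's argument: combine Lemma~\ref{E and E''} (using $\epsilon^{1/4} = J^{-1/8}$) with Corollary~\ref{E'' and E'''}, then absorb $J^{-1/3}$ into $J^{-1/8}$. Your reading of $E''$ as $N^{-1/2}\sum_{u\in G}E''_u$ (not the misprinted $E_u$) is the intended one, and your observation that $|\phi_+| = |\phi_-|$ makes the majorant uniform over the per-interval sign choices correctly fills in a step the paper leaves implicit.
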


\subsection{Type C terms}

\begin{lem}
    For any type C tuple $(p_{ij})$, $E'''(p_{ij}) = 0$. 
\end{lem}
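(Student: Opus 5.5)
Since every summand of $E'''$ is non-negative, it suffices to show that the summation range is empty: for a type C tuple, no $(m_{ij})\in \prod (p_{ij}^{-1}\mathbb{Z}\backslash\mathbb{Z})$ satisfies $\sum_{ij}m_{ij}\gamma_i\in\mathbb{Z}$. We argue $p$-adically, in the same spirit as the proof of Lemma~\ref{type D lem}. Fix a prime $p$ that occurs exactly once in the tuple, say $p = p_{i_0j_0}$, with $p\neq p_{ij}$ for every other index $(i,j)\neq(i_0,j_0)$. Suppose for contradiction that $(m_{ij})$ lies in the summation range.

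First, consider the distinguished term. Since $m_{i_0j_0}\in p^{-1}\mathbb{Z}\backslash\mathbb{Z}$, we may write $m_{i_0j_0}=a/p$ with $p\nmid a$, so its reduced denominator is exactly $p$. Because $p\in\mathcal{P}_N$ and hence $p\notin\mathcal{B}_N'$, the numerator and denominator of $\gamma_{i_0}$ are coprime to $p$. It follows that $\gamma_{i_0}m_{i_0j_0}$ has $p$-adic valuation exactly $-1$, and in particular $\gamma_{i_0}m_{i_0j_0}\notin\mathcal{R}_p$.

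Next, consider the remaining terms. For $(i,j)\neq(i_0,j_0)$ we have $m_{ij}\in p_{ij}^{-1}\mathbb{Z}$ with $p_{ij}\neq p$, so $m_{ij}\in\mathcal{R}_p$. Again $\gamma_i\in\mathcal{R}_p$, since its denominator is coprime to $p$. The set $\mathcal{R}_p$ is the localisation $\mathbb{Z}_{(p)}$ and hence a ring, so
\begin{equation*}
    \sum_{(i,j)\neq(i_0,j_0)}\gamma_i m_{ij}\in\mathcal{R}_p .
\end{equation*}

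Finally, we combine the two. Since $\mathbb{Z}\subseteq\mathcal{R}_p$, the assumption $\sum_{ij}\gamma_im_{ij}\in\mathbb{Z}$ gives
\begin{equation*}
    \gamma_{i_0}m_{i_0j_0}=\sum_{ij}\gamma_im_{ij}-\sum_{(i,j)\neq(i_0,j_0)}\gamma_im_{ij}\in\mathcal{R}_p,
\end{equation*}
which contradicts the previous step. Hence the sum defining $E'''$ is empty, and $E'''(p_{ij})=0$.

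The main point requiring care is that the summation is over $p_{ij}^{-1}\mathbb{Z}\backslash\mathbb{Z}$ rather than all of $p_{ij}^{-1}\mathbb{Z}$. This restriction, justified in Corollary~\ref{E'' and E'''} by $\phi_\pm$ vanishing on $\mathbb{Z}\backslash\{0\}$ (the zero frequency having been removed by subtracting $p_{ij}^{-1}$), is exactly what forces $m_{i_0j_0}$ to have denominator exactly $p$. Everything else is routine valuation bookkeeping using that $p\notin\mathcal{B}_N$.
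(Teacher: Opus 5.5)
Your proof is correct and takes essentially the same approach as the paper, which simply asserts that the sum defining $E'''$ is empty because a prime occurring only once admits no valid choice of $m_{ij}$. Your $p$-adic argument supplies the justification the paper leaves implicit: $p\in\mathcal{P}_N$ is coprime to the numerators and denominators of the $\gamma_i$, and the restriction $m_{i_0j_0}\notin\mathbb{Z}$ forces denominator exactly $p$.
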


\begin{proof}
    The sum defining $E'''$ sums over an empty set for type C tuples. In particular, if $p_{ij}$ is distinct from all other primes in the tuple, there is no valid choice of the tuple $(m_{ij})$ in this sum. 
\end{proof}

\begin{cor} \label{individual type C}
    For a type C tuple $(p_{ij})$, 
    \begin{equation*}
        |E(p_{ij})|\ll J^{-1/8}. 
    \end{equation*}
\end{cor}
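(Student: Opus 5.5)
This follows by combining the two results immediately preceding it. By the corollary just established (combining Lemma~\ref{E and E''} with Corollary~\ref{E'' and E'''}), every tuple $(p_{ij})$ of primes in $\mathcal{P}_N$ satisfies
\begin{equation*}
    |E(p_{ij})|\ll E'''(p_{ij}) + J^{-1/8}.
\end{equation*}
For a type C tuple, the preceding lemma gives $E'''(p_{ij}) = 0$, and the stated bound $|E(p_{ij})|\ll J^{-1/8}$ follows at once.

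The implied constant is independent of the tuple. It depends only on $\ell$ and $k$, which we treat as absolute constants, and on the bounds (\ref{Fourier l1 estimate all}) and (\ref{Fourier l1 estimate tail}), which are uniform in $p$.

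Behind the vanishing lemma is the following point. Suppose $p = p_{i_0j_0}$ occurs only once in the tuple. Every other $m_{ij}$ lies in $p_{ij}^{-1}\mathbb{Z}$ with $p_{ij}\neq p$. Each $\gamma_i$ has numerator and denominator coprime to $p$, since $p\notin\mathcal{B}_N$. Hence $\sum_{(i,j)\neq(i_0,j_0)}\gamma_i m_{ij}\in\mathcal{R}_p$. The constraint $\sum_{ij}\gamma_i m_{ij}\in\mathbb{Z}$ then forces $\gamma_{i_0}m_{i_0j_0}\in\mathcal{R}_p$, and therefore $m_{i_0j_0}\in\mathbb{Z}$. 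This is excluded, because the sum defining $E'''$ runs over $m_{ij}\in p_{ij}^{-1}\mathbb{Z}\backslash\mathbb{Z}$. That restriction is legitimate because $\phi_\pm$ vanishes on $\mathbb{Z}\backslash\{0\}$, and because the constant term has been removed by subtracting $p^{-1}$.

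Accordingly, I expect no real obstacle at this step. All the substantive work was done earlier: choosing between $\chi_{p,+}$ and $\chi_{p,-}$ on good intervals, bounding the number of bad intervals (Corollary~\ref{both arcs}), and using Lemma~\ref{key lin alg} to pass from $\alpha_i$ to $\gamma_i$. Here one only checks that the error terms $\epsilon^{1/4} = J^{-1/8}$ and $J^{-1/3}$ combine to $O(J^{-1/8})$, which holds because $J\to\infty$.
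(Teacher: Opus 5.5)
Your proposal is correct and follows the paper's route: the corollary is just the bound $|E|\ll E''' + J^{-1/8}$ combined with the vanishing of $E'''$ for type C tuples. Your justification of that vanishing (the lone prime $p$ forces $\gamma_{i_0}m_{i_0j_0}\in\mathcal{R}_p$, hence $m_{i_0j_0}\in\mathbb{Z}$) is right, and it spells out a step the paper only states briefly.
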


\begin{cor} \label{type C sum}
    We have
    \begin{equation*}
        \left|\sum_{\text{type C }(p_{ij})}E(p_{ij})\right|\ll R^\ell J^{-1/8}\ll 1. 
    \end{equation*}
\end{cor}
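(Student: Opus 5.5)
The bound follows by summing Corollary~\ref{individual type C} over all type C tuples, so the plan is a counting argument plus a check that the chosen parameters make $R^\ell J^{-1/8}$ bounded. First, since the tuple $(p_{ij})$ has $\ell = \ell_1 + \dots + \ell_k$ entries, each a prime in $\mathcal{P}_N\subseteq [R]$, the number of type C tuples is at most $|\mathcal{P}_N|^\ell\leq R^\ell$. Second, by the triangle inequality and Corollary~\ref{individual type C},
\begin{equation*}
    \left|\sum_{\text{type C }(p_{ij})}E(p_{ij})\right|\leq \sum_{\text{type C }(p_{ij})}|E(p_{ij})|\ll R^\ell J^{-1/8}.
\end{equation*}
For this to be legitimate, the implied constant in Corollary~\ref{individual type C} must be uniform in the tuple. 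It is: it depends only on $\ell$ (which is fixed) and on absolute constants. The underlying bound $|E|\ll E''' + J^{-1/8}$ used only (\ref{Fourier l1 estimate all}), (\ref{Fourier l1 estimate tail}), Lemma~\ref{key lin alg} and Corollary~\ref{both arcs}, and none of these depend on the particular primes.

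The remaining step is the one where the outcome is actually decided: showing $R^\ell J^{-1/8}\ll 1$ from Definition~\ref{params}. That definition imposes $\log J/\log R\ggg 1$, i.e.\ $\log R = o(\log J)$. Hence, since $\ell$ is fixed,
\begin{equation*}
    \log\left(R^\ell J^{-1/8}\right) = \ell\log R - \tfrac{1}{8}\log J = -\left(\tfrac18 - o(1)\right)\log J\rightarrow -\infty .
\end{equation*}
So $R^\ell J^{-1/8} = o(1)$, which in particular gives $\ll 1$. I expect no real obstacle beyond confirming this uniformity and the parameter growth.

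If one prefers not to rely on uniformity through the earlier corollaries, an alternative is to sum the intermediate estimates directly: $|E|\ll E''' + J^{-1/8}$ with $E''' = 0$ for each type C tuple. This gives the same bound.
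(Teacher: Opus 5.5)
Your proposal is correct and is essentially the paper's argument. The bound $|E(p_{ij})|\ll J^{-1/8}$ from Corollary~\ref{individual type C} is uniform over tuples, as you checked via (\ref{Fourier l1 estimate all}), (\ref{Fourier l1 estimate tail}), Lemma~\ref{key lin alg} and Corollary~\ref{both arcs}; there are at most $R^\ell$ type C tuples; and the condition $\log J/\log R\ggg L\ggg 1$ from Definition~\ref{params} gives $R^\ell J^{-1/8}\to 0$ for fixed $\ell$.
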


\subsection{Type A terms}

For a type A tuple $(p_{ij})$, we shall in fact reduce $E(p_{ij})$ to a sum involving $E(p_{ij}')$ for type C tuples $(p_{ij}')$ obtained by removing some primes from the tuple $(p_{ij})$. Note that these type C tuples have $\ell'\leq \ell/2$ entries. 

Strictly speaking, our previous bounds for such type C tuples may implicitly depend on $\ell'$. However, as $\ell'\leq \ell/2$, all implied constants may be uniformed bounded across choices of $\ell'$ given $\ell$. 

\begin{lem} \label{individual type A}
    For any type A tuple $(p_{ij})$, 
    \begin{equation*}
        E(p_{ij}) = \prod_{ij}' \left(\frac{1}{p_{ij}} - \frac{1}{p_{ij}^2}\right) + O\left(J^{-1/8}\right), 
    \end{equation*}
    where $\prod_{ij}'$ denotes the product over distinct $p_{ij}$'s. 
\end{lem}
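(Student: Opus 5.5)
We reduce to type C tuples by expanding each repeated factor. For a type A tuple, the primes come in pairs $p_{ij}=p_{ij'}=q$ with the same index $i$. For such a pair, $1_{q\mid \floor{x}}^2=1_{q\mid \floor{x}}$, so
\begin{equation*}
    (1_{q\mid \floor{x}}-q^{-1})^2 = (1-2q^{-1})1_{q\mid\floor{x}} + q^{-2} = (1-2q^{-1})(1_{q\mid\floor{x}}-q^{-1}) + \left(\frac{1}{q}-\frac{1}{q^2}\right).
\end{equation*}
Let $q_1,\dots,q_s$ (with $s=\ell/2$) be the distinct primes, and let $i(t)$ be the index attached to $q_t$. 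Then the integrand of $E(p_{ij})$ equals
\begin{equation*}
    \prod_{t=1}^s\left[(1-2q_t^{-1})(1_{q_t\mid\floor{\alpha_{i(t)}n+\beta_{i(t)}}}-q_t^{-1}) + \left(\frac{1}{q_t}-\frac{1}{q_t^2}\right)\right].
\end{equation*}

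Expanding this product over subsets $U\subseteq[s]$ gives
\begin{equation*}
    E(p_{ij}) = \sum_{U\subseteq [s]}\prod_{t\notin U}\left(\frac{1}{q_t}-\frac{1}{q_t^2}\right)\prod_{t\in U}(1-2q_t^{-1})\cdot E\big((q_t)_{t\in U}\big),
\end{equation*}
where $E((q_t)_{t\in U})$ is the $E$-quantity for the tuple with entries $q_t$ placed at index $i(t)$, each occurring once. The term $U=\emptyset$ is exactly the main term $\prod'_{ij}(p_{ij}^{-1}-p_{ij}^{-2})$.

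For each nonempty $U$, the reduced tuple consists of distinct primes each occurring once, so it is type C with $\ell'=|U|\le \ell/2$ entries. Corollary~\ref{individual type C} therefore applies. As remarked before the lemma, its implied constant is uniform over $\ell'\le\ell/2$, giving $|E((q_t)_{t\in U})|\ll J^{-1/8}$. The prefactors are bounded by $1$ in absolute value, and there are at most $2^{s}\ll 1$ subsets. The total error is therefore $O(J^{-1/8})$, which proves the lemma.

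The main point to check is that Corollary~\ref{individual type C}, and the machinery behind it (Lemma~\ref{key lin alg}, the good/bad interval partition, and Corollary~\ref{E'' and E'''}), applies verbatim to tuples with fewer entries $\ell'<\ell$, possibly with some $\ell_i'=0$. This holds because $L\ge \ell\ge\ell'$, and the choice of $\chi_\pm$ on each $S_u$ depends only on $i$, not on the tuple. The vanishing of $E'''$ still follows because each prime occurs once. Beyond this uniformity check, the argument is purely algebraic.
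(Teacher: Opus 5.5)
Your proposal is correct and follows the paper's proof: rewrite each squared factor as $(1-2q^{-1})(1_{q\mid\floor{\cdot}}-q^{-1})+(q^{-1}-q^{-2})$, expand the product, and bound every term other than the main term as a type C term via Corollary~\ref{individual type C}, with implied constants uniform over the shorter tuples. Your explicit subset expansion and your check that the type C machinery still applies to shorter tuples spell out what the paper states in one line.
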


\begin{proof}
    We have
    \begin{align*}
        E(p_{ij}) &= \mathbb{E}_{n\sim U[N]}\left(\prod_{ij}' (1_{p_{ij}\mid\floor{\alpha_i n + \beta_i}} - p_{ij}^{-1})^2\right)\\
        &= \mathbb{E}_{n\sim U[N]}\left(\prod_{ij}' (1_{p_{ij}\mid\floor{\alpha_i n + \beta_i}} - 2p_{ij}^{-1}1_{p_{ij}\mid\floor{\alpha_i n + \beta_i}} + p_{ij}^{-2})\right)\\
        &= \mathbb{E}_{n\sim U[N]}\left(\prod_{ij}' \left((1 - 2p_{ij}^{-1})(1_{p_{ij}\mid\floor{\alpha_i n + \beta_i}} - p_{ij}^{-1})\right) + \left(p_{ij}^{-1} - p_{ij}^{-2}\right)\right)\\
        &= \prod_{ij}' \left(\frac{1}{p_{ij}} - \frac{1}{p_{ij}^2}\right) + O\left(J^{-1/8}\right), 
    \end{align*}
    since all terms other than $\prod_{ij}' \left(p_{ij}^{-1} - p_{ij}^{-2}\right)$ in the binomial expansion of 
    \begin{equation*}
        \mathbb{E}_{n\sim U[N]}\left(\prod_{ij}' \underbrace{(1 - 2p_{ij}^{-1})(1_{p_{ij}\mid\floor{\alpha_i n + \beta_i}} - p_{ij}^{-1})} + \underbrace{(p_{ij}^{-1} - p_{ij}^{-2})}\right)
    \end{equation*}
    is a type C term bounded by Corollary~\ref{individual type C}. 
\end{proof}

\begin{cor} \label{type A sum}
    We have
    \begin{equation*}
        \sum_{\text{type A }(p_{ij})}E(p_{ij}) = (C_{\ell_1, \dots, \ell_k} + o(1)) \left(\log\log N\right)^{\frac{\ell}{2}}, 
    \end{equation*}
    where $C_{\ell_1, \dots, \ell_k} = 0$ if some $\ell_i$ is odd, and $C_{\ell_1, \dots, \ell_k} = \prod_{i = 1}^k\frac{\ell_i!}{2^{\ell_i/2}(\ell_i/2)!}$ otherwise. 
\end{cor}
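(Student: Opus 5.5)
We sum the estimate of Lemma~\ref{individual type A} over all type A tuples and then count the type A tuples according to their pairing pattern. The error term is easy to dispose of. There are at most $R^\ell$ tuples, each contributing $O(J^{-1/8})$. Since $\log J/\log R\ggg 1$ by Definition~\ref{params}, we have $R^\ell J^{-1/8}\ll 1$, which is $o((\log\log N)^{\ell/2})$.

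For the main term, a type A tuple arises only when each $\ell_i$ is even, because the primes must pair up within each row $i$. If some $\ell_i$ is odd, the sum over type A tuples is therefore empty, and $C_{\ell_1,\dots,\ell_k}=0$ as claimed. Now assume every $\ell_i$ is even. A type A tuple is determined by two pieces of data:
\begin{itemize}
\item for each $i$, a perfect matching of $[\ell_i]$, of which there are $\ell_i!/(2^{\ell_i/2}(\ell_i/2)!)$ choices;
\item an injective assignment of distinct primes in $\mathcal{P}_N$ to the $\ell/2$ pairs, with distinct primes used across different rows as well.
\end{itemize}
Each tuple corresponds to exactly one such datum, because all primes in a type A tuple occur exactly twice. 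Thus
\begin{equation*}
\sum_{\text{type A}}\prod_{ij}'\left(\frac{1}{p_{ij}}-\frac{1}{p_{ij}^2}\right)=C_{\ell_1,\dots,\ell_k}\sum_{\substack{q_1,\dots,q_{\ell/2}\in\mathcal{P}_N\\ \text{distinct}}}\prod_{t=1}^{\ell/2}\left(\frac{1}{q_t}-\frac{1}{q_t^2}\right).
\end{equation*}

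It remains to evaluate the sum over distinct $q_t$. Set $S=\sum_{p\in\mathcal{P}_N}(p^{-1}-p^{-2})$. Since $\sum_p p^{-2}$ converges, (\ref{good primes estimate}) gives $S=\log\log N+o(\sqrt{\log\log N})$, so $S^{\ell/2}=(1+o(1))(\log\log N)^{\ell/2}$. Dropping the distinctness condition changes the sum by at most $\binom{\ell/2}{2}\sum_p p^{-2}\,S^{\ell/2-2}$. Indeed, the difference is bounded by summing over which two indices coincide, and that terms is $O((\log\log N)^{\ell/2-2})$, which is $o((\log\log N)^{\ell/2})$. Combining these estimates gives $(C_{\ell_1,\dots,\ell_k}+o(1))(\log\log N)^{\ell/2}$.

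The only step needing any care is the bookkeeping: checking that the correspondence between type A tuples and (matching, injective prime labelling) pairs is a bijection. This holds because each prime appears exactly twice and within a single row. Everything else follows from Mertens' estimate and the parameter choices in Definition~\ref{params}.
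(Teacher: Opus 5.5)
Your proposal is correct and follows essentially the same route as the paper. You sum Lemma~\ref{individual type A} over type A tuples and absorb the error as $R^\ell J^{-1/8}\ll 1$. You then count the tuples via per-row pairings to get $C_{\ell_1,\dots,\ell_k}$ times a sum over distinct primes, remove the distinctness condition at a cost of $O\bigl(S^{\ell/2-2}\sum_p p^{-2}\bigr)$, and conclude with (\ref{good primes estimate}); the paper counts the same way, phrasing it as increasing prime tuples weighted by a multinomial coefficient, and your version only needs a fixed ordering of the pairs so that injective labellings correspond to ordered tuples.
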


\begin{proof}
    If $\ell_i$ is odd for some $i$, then there are no type A tuples, and the desired equality holds trivially. We assume now that $\ell_1, \dots, \ell_k$ are all even. We have
    \begin{align*}
        &\sum_{\text{type A }(p_{ij})}E(p_{ij})\\
        &= \sum_{\text{type A }(p_{ij})}\prod_{ij}' (p_{ij}^{-1} - p_{ij}^{-2})+ O(R^\ell J^{-1/8})\\
        &= \sum_{p_1 < \dots < p_{\ell/2}\in\mathcal{P}_N} \binom{\ell/2}{\ell_1/2, \dots, \ell_k/2}\prod_{i = 1}^k\frac{\ell_i!}{2^{\ell_i/2}}\prod_{j = 1}^{\ell/2} (p_j^{-1} - p_j^{-2})+ O(1)\\
        &= C_{\ell_1, \dots, \ell_k}\sum_{\text{distinct } p_1, \dots, p_{\ell/2}\in\mathcal{P}_N} \prod_{j = 1}^{\ell/2} (p_j^{-1} - p_j^{-2})+ O(1)\\
        &= C_{\ell_1, \dots, \ell_k}\sum_{p_1, \dots, p_{\ell/2}\in\mathcal{P}_N} \prod_{j = 1}^{\ell/2} (p_j^{-1} - p_j^{-2}) + O\left(1 + \sum_{\substack{\text{not pairwise distinct}\\ p_1, \dots, p_{\ell/2}\in\mathcal{P}_N}} \prod_{j = 1}^{\ell/2} (p_j^{-1} - p_j^{-2})\right)\\
        &= C_{\ell_1, \dots, \ell_k}\left(\sum_{p\in\mathcal{P}_N}(p^{-1} - p^{-2})\right)^{\ell/2} + O\left(1 + \left(\sum_{p\in\mathcal{P}_N}(p^{-1} - p^{-2})\right)^{\frac{\ell}{2} - 2}\left(\sum_{p\in\mathcal{P}_N}p^{-2}\right)\right)\\
        &= (C_{\ell_1, \dots, \ell_k} + o(1))\left(\log\log N\right)^{\ell/2}, 
    \end{align*}
    by (\ref{good primes estimate}). 
\end{proof}

\section{Estimating the mixed moment} \label{assembly}

We now put everything together to estimate the mixed moment
\begin{equation*}
    \mathbb{E}_{n\sim U[N]}\left(\prod_{i = 1}^k \left(\frac{\widetilde{\omega'_N}(\floor{\alpha_i n + \beta_i})}{\sqrt{\log\log N}}\right)^{\ell_i}\right), 
\end{equation*}
and establish Proposition~\ref{reduced main}, which in turn completes the proof of Theorem~\ref{main}. 

\begin{proof}[Proof of Proposition~\ref{reduced main}]
    By (\ref{moment decomposition}), we may decompose the mixed moment as
    \begin{align*}
        &\mathbb{E}_{n\sim U[N]}\left(\prod_{i = 1}^k \left(\frac{\widetilde{\omega'_N}(\floor{\alpha_i n + \beta_i})}{\sqrt{\log\log N}}\right)^{\ell_i}\right)\\
        &= \frac{1}{(\log\log N)^{\ell/2}}\sum_{p_{ij}\in \mathcal{P}_N}E(p_{ij})\\
        &= \frac{1}{(\log\log N)^{\ell/2}}\left(\sum_{\text{type A }(p_{ij})}E(p_{ij}) + \sum_{\text{type B }(p_{ij})}E(p_{ij}) + \sum_{\text{type C }(p_{ij})}E(p_{ij}) + \sum_{\text{type D }(p_{ij})}E(p_{ij})\right)\\
        &= \frac{1}{(\log\log N)^{\ell/2}}\left((C_{\ell_1, \dots, \ell_k} + o(1))(\log\log N)^{\ell/2} + o\left((\log\log N)^{\ell/2}\right) + O(1) + O\left((\log\log N)^{\frac{\ell - 1}{2}}\right)\right)\\
        &=C_{\ell_1, \dots, \ell_k} + o(1), 
    \end{align*}
    by Corollaries~\ref{type A sum}, \ref{type B sum}, \ref{type C sum}, and~\ref{type D sum} for type A, B, C, and D terms, respectively. Therefore, 
    \begin{equation*}
        \mathbb{E}_{n\sim U[N]}\left(\prod_{i = 1}^k \left(\frac{\widetilde{\omega'_N}(\floor{\alpha_i n + \beta_i})}{\sqrt{\log\log N}}\right)^{\ell_i}\right)\rightarrow C_{\ell_1, \dots, \ell_k} = \mathbb{E}_{Z\sim\mathcal{N}(0, I_k)}(Z_1^{\ell_1}\cdots Z_k^{\ell_k})
    \end{equation*}
    as $N\rightarrow \infty$. 
\end{proof}

\section{Quantitative convergence} \label{quant conv}

We discuss here quantitative bounds on the rate of convergence of Erd\H{o}s--Kac laws. The three qualitative Erd\H{o}s--Kac laws we discuss, Theorem~\ref{Beatty}, Theorem~\ref{main} and Conjecture~\ref{gen poly conj}, each depend on a set of parameters. Specifically, 
\begin{itemize}
    \item Theorem~\ref{Beatty} concerns $\omega(\floor{\alpha n + \beta})$ for a Beatty sequence $\floor{\alpha n + \beta}$ depending on real parameters $\alpha > 0$ and $\beta$, 
    \item Theorem~\ref{main} concerns the joint distribution of $\omega(\floor{\alpha_i n + \beta_i})$ for multiple Beatty sequences, depending on real parameters $\alpha_i > 0, \beta_i$ for which $\alpha_i/\alpha_j\notin\mathbb{Q}$, 
    \item Conjecture~\ref{gen poly conj} concerns $\omega(\floor{f(n)})$ a generalised polynomial $\floor{f(n)}$, depending on the coefficients of $f$, at least one of which (other than the constant coefficient) must be irrational. 
\end{itemize}
We note that Theorem~\ref{Beatty} does not require the irrationality of any parameter, whilst Theorem~\ref{main} and Conjecture~\ref{gen poly conj} do. In fact, the qualitative Erd\H{o}s--Kac laws asserted by Theorem~\ref{main} and Conjecture~\ref{gen poly conj} fail in general when the relevant parameters are rational. 

We consider the prospects of quantitative bounds on the rate of convergence which do not depend on the value taken by the relevant parameters. We shall show that such a quantitative bound exists for Theorem~\ref{Beatty}, but not for Theorem~\ref{main} or Conjecture~\ref{gen poly conj}. 

The quantitative bound for Theorem~\ref{Beatty} (given by Theorem~\ref{Beatty quant}) will be derived by combining the Fourier techniques developed in Section~\ref{type A and C} and careful estimates of the characteristic function of $\omega(\floor{\alpha n + \beta})$ via its binomial moments. 

The lack of quantitative bounds for Theorem~\ref{main} or Conjecture~\ref{gen poly conj} heuristically trace back to the failure of qualitative convergence for rational values of the relevant parameters, and will follow from taking these parameters to be sufficiently Liouville-like. We show the lack of quantitative bounds formally for Conjecture~\ref{gen poly conj} (given by Theorem~\ref{gen poly quant}), a very similar proof yields the analogous result (with respect to the multivariate Kolmogorov distance) for Theorem~\ref{main}. 

\begin{df} \label{Kolmogorov dist}
    Given two real random variables $X, Y$ with cumulative distribution functions $F_X, F_Y$, the Kolmogorov distance between $X$ and $Y$ is given by 
    \begin{equation*}
        d_K(X, Y) := \|F_X - F_Y\|_\infty = \sup_{x\in\mathbb{R}}|F_X(x) - F_Y(x)| = \sup_{x\in\mathbb{R}}|\PP{X\leq x} - \PP{Y\leq x}|. 
    \end{equation*}
\end{df}

In the case of the original Erd\H{o}s--Kac theorem, the rate of convergence was settled by R\'enyi and Tur\'an~\cite{E-K-rate-of-conv}. 

\begin{thm}[R\'enyi and Tur\'an~\cite{E-K-rate-of-conv}, Theorem 3] \label{E-K quant}
    The Kolmogorov distance between the random variable 
    \begin{equation*}
        \frac{\omega(n) - \log\log N}{\sqrt{\log\log N}}
    \end{equation*}
    and the standard Gaussian $\mathcal{N}(0, 1)$ is bounded above by $O\left(\frac{1}{\sqrt{\log\log N}}\right)$ as $N\rightarrow\infty$. 
\end{thm}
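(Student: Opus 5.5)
The plan is to use the characteristic function and the Berry--Esseen smoothing inequality. The arithmetic input is a uniform asymptotic for $\sum_{n\le N} z^{\omega(n)}$ obtained by the Selberg--Delange method. Write $\sigma := \sqrt{\log\log N}$ and $X := (\omega(n) - \log\log N)/\sigma$ for $n\sim U[N]$. The Berry--Esseen smoothing inequality gives, for any $T>0$,
\begin{equation*}
    d_K(X, \mathcal{N}(0,1)) \ll \int_{-T}^{T}\frac{\left|\mathbb{E}\, e^{itX} - e^{-t^2/2}\right|}{|t|}\,dt + \frac{1}{T}.
\end{equation*}
I would take $T = c\sigma$ for a small absolute constant $c$. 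Then the term $1/T$ is already $O(1/\sigma)$, and it remains to control the characteristic function for $|t|\le c\sigma$.

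\textbf{Step 1 (the main obstacle): a Selberg--Delange asymptotic.} I would prove that, uniformly for complex $|z|\le 2$,
\begin{equation*}
    \frac{1}{N}\sum_{n\le N} z^{\omega(n)} = F(z)(\log N)^{z-1}\left(1 + O\left(\frac{1}{\log N}\right)\right),
\end{equation*}
where
\begin{equation*}
    F(z) = \frac{1}{\Gamma(z)}\prod_p\left(1 + \frac{z}{p-1}\right)\left(1 - \frac1p\right)^{z}.
\end{equation*}
The starting point is the factorisation
\begin{equation*}
    \sum_n z^{\omega(n)} n^{-s} = \zeta(s)^z G(z,s),
\end{equation*}
where the Euler product $G(z,s)$ is holomorphic and uniformly bounded in $\operatorname{Re} s > 1/2$ for $|z|\le 2$. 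I would then apply a truncated Perron formula and deform the contour into a Hankel contour around $s=1$, using the classical zero-free region of $\zeta$ to control $\zeta(s)^z$ to the left of $1$. The main term comes from the Hankel integral
\begin{equation*}
    \frac{1}{2\pi i}\int_{\mathcal{H}} (s-1)^{-z} N^{s}\,\frac{ds}{s} \approx \frac{N(\log N)^{z-1}}{\Gamma(z)}.
\end{equation*}
Tracking uniformity in $z$ and the error terms is the technical heart of the argument. Everything after this step is routine.

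\textbf{Step 2: the characteristic function.} Set $z = e^{it/\sigma}$, which lies on the unit circle. Then
\begin{equation*}
    \mathbb{E}\, e^{itX} = F(z)\exp\big((z-1)\sigma^2 - it\sigma\big)\big(1 + O(1/\log N)\big).
\end{equation*}
Taylor expansion gives
\begin{equation*}
    (z-1)\sigma^2 - it\sigma = -\tfrac{t^2}{2} + O\!\left(\tfrac{|t|^3}{\sigma}\right),
\end{equation*}
and also
\begin{equation*}
    \operatorname{Re}\big((z-1)\sigma^2\big) = \sigma^2\big(\cos(t/\sigma) - 1\big) \le -\tfrac{t^2}{5} \quad \text{for } |t|\le \sigma.
\end{equation*}
Since $F$ is smooth near $1$ with $F(1)=1$, we have $F(z) = 1 + O(|t|/\sigma)$. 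Combining these, for $|t|\le c\sigma$,
\begin{equation*}
    \left|\mathbb{E}\, e^{itX} - e^{-t^2/2}\right| \ll \frac{|t| + |t|^3}{\sigma}\,e^{-t^2/5} + \frac{1}{\log N}.
\end{equation*}
The choice of $c$ small ensures that $|t|^3/\sigma \le c\,t^2$, so that $e^{O(|t|^3/\sigma)}$ is absorbed into a Gaussian factor.

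\textbf{Step 3: assembly.} Dividing by $|t|$ and integrating, the first term contributes
\begin{equation*}
    \sigma^{-1}\int_{\mathbb{R}}\left(1+t^2\right)e^{-t^2/5}\,dt \ll \sigma^{-1}.
\end{equation*}
The second term contributes $O(\log T/\log N)$ away from $t=0$. Near $t=0$ there is no singularity, since both characteristic functions equal $1$ at $t=0$ and the difference is Lipschitz with constant $O(1/\sigma)$. This $O(\log T/\log N)$ contribution is negligible. Together with $1/T \ll 1/\sigma$, the Berry--Esseen inequality then yields the claimed bound $O(1/\sqrt{\log\log N})$.
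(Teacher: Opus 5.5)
The paper does not prove this statement itself: it quotes R\'enyi and Tur\'an, remarking only that their argument is complex-analytic and exploits the multiplicative structure of $\omega$. Your Selberg--Delange asymptotic for $\frac{1}{N}\sum_{n\le N}z^{\omega(n)}$, fed into the Berry--Esseen smoothing inequality, is the standard form of that argument, so the proposal is correct. Unlike the truncated moment method the paper uses for Theorem~\ref{Beatty quant}, it loses no $\log\log\log N$ factor, because no primes are discarded and the mean is $\log\log N+O(1)$.

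There are two harmless slips.
\begin{enumerate}
\item The Selberg--Delange error is additive, $(\log N)^{z-1}\bigl(F(z)+O(1/\log N)\bigr)$. Your multiplicative form therefore fails near zeros of $F$ such as $z=0$, but it holds for $|z-1|\le c$, which is the only range you use.
\item Your Lipschitz bound near $t=0$ needs $\mathbb{E}X\ll 1/\sigma$ and $\mathbb{E}X^2\ll 1$. These follow from Mertens' theorem and the Tur\'an--Kubilius inequality, or from Step~1 via Cauchy estimates in $z$.
\end{enumerate}
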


Theorem~\ref{E-K quant} is tight as $\frac{\omega(n) - \log\log N}{\sqrt{\log\log N}}$ is supported on $\frac{1}{\sqrt{\log\log N}}\mathbb{Z}$, and matches the Berry-Esseen bound for sums of independent random variables. R\'enyi and Tur\'an's proof relies on complex-analytic methods leveraging the additivity of $\omega(n)$, which is often not available in generalisations of the Erd\H{o}s--Kac theorem. 

\subsection{Proof of Theorem~\ref{Beatty quant}}

We first establishes Theorem~\ref{Beatty quant} on a quantitative rate of convergence for $\omega(\floor{\alpha n + \beta})$, which we restate here for clarity. As remarked above, we are unable to produce the tight $O\left(\frac{1}{\sqrt{\log\log N}}\right)$ bound in this setting; our bound will be weaker by a factor of $\log \log \log N$. 

\textbf{Theorem~\ref{Beatty quant}} \textit{Let $\alpha > 0$, $\beta\in\mathbb{R}$. For $n\sim U[N]$, the Kolmogorov distance (see Section~\ref{quant conv}, Definition~\ref{Kolmogorov dist}) between the random variable 
\begin{equation*}
    \frac{\omega(\floor{\alpha n + \beta}) - \log\log N}{\sqrt{\log\log N}}
\end{equation*}
and the standard Gaussian $\mathcal{N}(0, 1)$ is bounded above by $O_{\alpha, \beta}\left(\frac{\log\log\log N}{\sqrt{\log\log N}}\right)$ as $N\rightarrow\infty$, where the implied constant may depend on $\alpha$ and $\beta$. }

Throughout the proof, we consider $\alpha$ and $\beta$ to be absolute constants and implicitly assume that $N$ is sufficiently large in an absolute sense. We assume, without loss of generality\footnote{We may ensure this, for example, by considering the random variable
\begin{equation*}
    \frac{\omega(\floor{\alpha n + \beta}) - \log\log N}{\sqrt{\log\log N}}
\end{equation*}
for $n\sim U[c + 1, N + c]$ instead, where $c$ is an absolute constant taken so that $\alpha c + \beta\geq 1$. The Kolmogorov distance between this new random variable and the original is bounded by $O(N^{-1})$.}, that $\beta\geq 1$. In fact, the implied constant in Theorem~\ref{Beatty quant} may be taken uniformly whenever $\alpha$ is bounded away from $0$ and $\infty$, and $\beta/\alpha$ is bounded away from $-\infty$. The latter assumption ensures that the number of negative initial terms in the Beatty sequence $\floor{\alpha n + \beta}$ is bounded. 

We shall employ a classical lemma of Esseen~\cite{Esseen}, which establishes upper bounds on the Kolmogorov distance to the standard Gaussian via characteristic function estimates. 

\begin{lem}[Esseen's smoothing lemma~\cite{Esseen}] \label{esseen smoothing lem}
    Given a real random variable $X$, for any real number $A > 0$, 
    \begin{equation*}
        d_K(X, Z) \ll A^{-1} + \int_{-A}^A \left|\frac{\EE{e(tx)} - e^{-2\pi^2t^2}}{t}\right|dt, 
    \end{equation*}
    where $Z\sim \mathcal{N}(0, 1)$. 
\end{lem}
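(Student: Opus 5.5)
We prove this by the standard Fourier smoothing argument. We compare $F_X$ with the Gaussian distribution function $\Phi$ after convolving both with a kernel whose Fourier transform is supported in $[-A, A]$. We then remove the smoothing using the monotonicity of $F_X$ and the boundedness of the Gaussian density.

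Let $\Delta := F_X - \Phi$, so that $\Delta$ is bounded, right-continuous, and tends to $0$ at $\pm\infty$. Write $\eta := \sup_x |\Delta(x)| = d_K(X, Z)$. Take the Fej\'er-type kernel
\begin{equation*}
    v_A(x) := \frac{1 - \cos(2\pi A x)}{2\pi^2 A x^2}.
\end{equation*}
It is non-negative with $\int v_A = 1$, its Fourier transform is $\widehat{v_A}(t) = (1 - |t|/A)_+$, which is supported on $[-A, A]$ and bounded by $1$, and its tails satisfy $\int_{|y|\geq h} v_A(y)\,dy \ll (Ah)^{-1}$.

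\textbf{Step 1 (smoothed bound).} The Fourier--Stieltjes transform of $d\Delta$ is $\EE{e(tX)} - e^{-2\pi^2t^2}$ (up to the sign convention in $e(\cdot)$), and $\Delta * v_A$ is a bounded function whose derivative has compactly supported Fourier transform. Integration by parts and Fourier inversion then give, for every $x$,
\begin{equation*}
    \Delta * v_A(x) = \int_{-A}^{A}\frac{\EE{e(tX)} - e^{-2\pi^2t^2}}{2\pi i t}\,\widehat{v_A}(t)\,e(\mp tx)\,dt,
\end{equation*}
and hence
\begin{equation*}
    \sup_x |\Delta * v_A(x)| \ll \int_{-A}^A \left|\frac{\EE{e(tX)} - e^{-2\pi^2 t^2}}{t}\right| dt =: I.
\end{equation*}
If $I = \infty$ there is nothing to prove. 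Otherwise the integrand is integrable at $t = 0$, and the inversion can be justified rigorously, for instance by first inserting a factor $e^{-\delta t^2}$ and letting $\delta \to 0$. Since $\Delta(\pm\infty) = 0$, no constant of integration appears.

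\textbf{Step 2 (unsmoothing).} This is the main step. Let $c := \sup \Phi' = (2\pi)^{-1/2}$. Choose $x_0$ with $|\Delta(x_0)| \geq \eta/1.01$, say $\Delta(x_0) > 0$; the negative case is symmetric, using intervals to the left. Since $F_X$ is non-decreasing, for $s \geq 0$ we have
\begin{equation*}
    \Delta(x_0 + s) \geq \Delta(x_0) - cs.
\end{equation*}
Set $h := \eta/(4c)$ and $x_1 := x_0 + h$. Then $\Delta(x_1 - y) \geq \eta/2 - \eta/4 = \eta/4$ (roughly) for all $|y| \leq h$, while $\Delta \geq -\eta$ everywhere. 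Therefore
\begin{equation*}
    \Delta * v_A(x_1) \geq \frac{\eta}{4}\left(1 - \frac{C}{Ah}\right) - \eta\,\frac{C}{Ah} \geq \frac{\eta}{4} - \frac{C'}{A},
\end{equation*}
using $\eta/(Ah) = 4c/A$. Combining with Step 1 gives $\eta/4 \leq C'/A + O(I)$, which is the claimed bound $d_K(X, Z) \ll A^{-1} + I$.

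The only delicate points are the bookkeeping in Step 2, namely choosing $h$ proportional to $\eta$ so that the tail error becomes $O(A^{-1})$ independently of $\eta$, and the justification of the inversion formula in Step 1 near $t = 0$.
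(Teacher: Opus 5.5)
Your proof is correct. It is the standard Fourier-smoothing argument (Fej\'er kernel, inversion for the smoothed difference, then unsmoothing via the monotonicity of $F_X$ and the bounded Gaussian density) by which Esseen's inequality is classically proved; the paper gives no proof and simply cites Esseen. One slip is cosmetic: for $|y|\le h$ the point $x_1-y$ ranges over $[x_0,x_0+2h]$, so Step 2 should read $\Delta(x_1-y)\ge \eta/1.01-2ch=\eta/1.01-\eta/2>\eta/4$, which still gives what you need.
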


We employ a simplified version of the machinery used to prove Theorem~\ref{main}. In particular, we make use of the parameters $R, J$ and $\epsilon = J^{-1/2}$. However, contrary to the values given in Definition~\ref{params} for the proof of Theorem~\ref{main}, we now take $R := N^{1/\log\log N}$ and $J := N^{1/25}$. 

The analogue of Lemma~\ref{key lin alg} in this setting takes a very simple form. We reuse the notation $\mathcal{B}_N$ and $\mathcal{P}_N$ to denote analogous but newly defined sets of primes, given below. 

\begin{df}
    Let $\gamma = \gamma_N\in \mathcal{Q}(N^{1/8})$ minimise $|\gamma - \alpha|$. Let $\gamma = a/b$ for coprime positive integers $a, b$. Similar to the treatment in Section~\ref{lin alg}, let $\mathcal{B}_N$ denote the set of prime factors of $a$, together with the element 2. 
\end{df}

Since $\gamma\rightarrow \alpha$ as $N\rightarrow\infty$, we have $a\asymp b$. Note that any other $\gamma'\neq \gamma\in \mathcal{Q}(N^{1/8})$ satisfies
\begin{equation*}
    |\gamma' - \alpha|\gg N^{-1/4}. 
\end{equation*}

\begin{df}
    Let $\mathcal{P}_N$ denote the set of primes $p\leq R := N^{1/ \log \log N}$ not in $\mathcal{B}_N$. Let 
    \begin{equation*}
        \omega''_N(n) := \sum_{p\in\mathcal{P}_N} 1_{p|n}
    \end{equation*}
    denote the number of prime factors of $n$ in $\mathcal{P}_N$. In line with the Kubilius model, let 
    \begin{equation*}
        \omega'''_N := \sum_{p\in\mathcal{P}_N}X_p, 
    \end{equation*}
    where $X_p\sim \operatorname{Ber}(p^{-1})$ are independent random variables. Finally, let 
    \begin{equation*}
        s := \sum_{p\in\mathcal{P}_N} p^{-1}. 
    \end{equation*}
    denote the expectation of $\omega'''_N$. 
\end{df}

We first show that $\omega''_N(\floor{\alpha n + \beta})$ forms a good approximant of $\omega(\floor{\alpha n + \beta})$, before showing that the characteristic function of $\omega''_N(\floor{\alpha n + \beta})$ is well-approximated by the characteristic function of $\omega'''_N$. 

\begin{lem} \label{inverse sum estimates}
    Let $\mathcal{P}_N^c := \{p|p\notin \mathcal{P}_N, p\leq \alpha N + \beta\} = \mathcal{B}_N\cup \{p|R < p\leq \alpha N + \beta\}$ denote the set of primes which contribute to $\omega$ but not to $\omega''_N$. We have
    \begin{align*}
        &\sum_{p\in \mathcal{P}_N^c}p^{-1}\ll \log\log\log N, \\
        s = &\sum_{p\in \mathcal{P}_N}p^{-1} = \log\log N + O(\log\log\log N). 
    \end{align*}
\end{lem}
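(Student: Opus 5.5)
The plan is to split $\mathcal{P}_N^c$ into the bad primes $\mathcal{B}_N$ and the large primes in $(R, \alpha N + \beta]$, bound the reciprocal sum over each part separately, and then recover the estimate for $s$ from Mertens' theorem by subtraction.

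\textbf{Large primes.} By Mertens' estimate $\sum_{p\leq x}p^{-1} = \log\log x + M + O(1/\log x)$ we get
\begin{equation*}
\sum_{R<p\leq \alpha N+\beta}\frac1p = \log\log(\alpha N+\beta) - \log\log R + o(1) = \log\frac{\log N + O(1)}{\log N/\log\log N} + o(1) = \log\log\log N + O(1).
\end{equation*}
Here we use $\log R = \log N/\log\log N$, and the dependence on $\alpha, \beta$ enters only through the $O(1)$ term.

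\textbf{Bad primes.} The set $\mathcal{B}_N$ consists of $2$ together with the prime factors of $a$. Since $\gamma = a/b\in\mathcal{Q}(N^{1/8})$, we have $a\leq N^{1/8}$, so $a$ has at most $\omega(a)\ll \log N$ distinct prime factors. As in (\ref{bad primes estimate}), the reciprocal sum over any set of at most $m$ primes is maximised by the first $m$ primes. Since the $m$-th prime is $\ll m\log m$, that sum is $\ll \log\log(m\log m) \ll \log\log m$. Taking $m \ll \log N$ gives
\begin{equation*}
\sum_{p\in\mathcal{B}_N}p^{-1}\ll \log\log\log N.
\end{equation*}
Adding the two contributions yields the first claimed bound.

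\textbf{Estimate for $s$.} Write
\begin{equation*}
s = \sum_{p\leq R}p^{-1} - \sum_{p\in\mathcal{B}_N,\, p\leq R}p^{-1}.
\end{equation*}
By Mertens, $\sum_{p\leq R}p^{-1} = \log\log R + O(1) = \log\log N - \log\log\log N + O(1)$. The subtracted sum is $O(\log\log\log N)$ by the bad-primes bound above. Hence $s = \log\log N + O(\log\log\log N)$.

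The only step that is not immediate from Mertens is the bad-primes bound. It reduces to bounding $\omega(a)$ by $\log a/\log 2\ll\log N$ and invoking the extremal ordering by the smallest primes, exactly as in (\ref{bad primes estimate}). I expect no genuine obstacle; the main care needed is to keep the implied constants depending only on $\alpha,\beta$.
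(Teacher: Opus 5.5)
Your proposal is correct and matches the paper's proof: you bound the reciprocal sum over $\mathcal{B}_N$ via $|\mathcal{B}_N|\ll\log N$ and comparison with the first $|\mathcal{B}_N|$ primes, and the sum over primes in $(R,\alpha N+\beta]$ via Mertens' estimate. The only cosmetic difference is that you obtain $s$ by applying Mertens' estimate up to $R$ and subtracting the bad primes, whereas the paper subtracts the whole $\mathcal{P}_N^c$ sum from the Mertens sum up to $\alpha N+\beta$; both steps are immediate.
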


\begin{proof}
    It suffices to prove the first estimate, from which the second estimate follows via Mertens' estimate. Noting that $|\mathcal{B}_N|\ll 1 + \log a\leq \log N$, we have
    \begin{equation*}
        \sum_{p\in \mathcal{B}_N}p^{-1}\leq \sum_{\text{the first }|\mathcal{B}_N|\text{ primes } p}p^{-1}\ll \log \log \log N, 
    \end{equation*}
    by Mertens' estimate. Mertens' estimate also gives
    \begin{equation*}
        \sum_{R < p\leq \alpha N + \beta}p^{-1}\leq \log \log (\alpha N + \beta) - \log \log R + O(1) \ll \log \log \log N. 
    \end{equation*}
    Combining the two preceding inequalities, we arrive at the desired estimate for $\sum_{p\in \mathcal{P}_N^c}p^{-1}$. 
\end{proof}

\begin{lem} \label{quant esti lem triv}
    For a positive integer $d$, we have
    \begin{equation*}
        \mathbb{P}_{n\sim U[N]}(d\mid \floor{\alpha n + \beta}) \ll d^{-1}. 
    \end{equation*}
\end{lem}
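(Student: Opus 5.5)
We need to show that for every positive integer $d$, $\mathbb{P}_{n\sim U[N]}(d\mid \floor{\alpha n + \beta}) \ll d^{-1}$, with the implied constant depending only on $\alpha$ (and $\beta$). The plan is an elementary counting argument, with no Fourier analysis. The event $d\mid\floor{\alpha n+\beta}$ means that $\alpha n+\beta\in[dm, dm+1)$ for some integer $m$.

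\textbf{Step 1: at most $\ceil{1/\alpha}$ values of $n$ per window.} For a fixed $m$, the admissible $n$ lie in an interval of real length $1/\alpha$. Hence each $m$ contributes at most $\ceil{1/\alpha}\le 1/\alpha+1$ values of $n$.

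\textbf{Step 2: count the windows.} As $n$ ranges over $[N]$, the value $\alpha n+\beta$ lies in $[\alpha+\beta,\alpha N+\beta]$, an interval of length less than $\alpha N$. Since $dm$ must lie within distance $1$ of this interval, the number of relevant $m$ is at most $\alpha N/d+2$.

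\textbf{Step 3: combine.} The count of admissible $n$ is therefore at most
\begin{equation*}
(1/\alpha+1)(\alpha N/d+2) \ll_\alpha N/d + 1.
\end{equation*}
Dividing by $N$ gives $\mathbb{P}\ll d^{-1}+N^{-1}$.

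\textbf{Step 4: remove the $N^{-1}$ term.} For $d\le \alpha N+\beta$ we have $N^{-1}\ll d^{-1}$, which gives the claim. For $d>\alpha N+\beta$, the standing assumption $\beta\ge1$ makes $\floor{\alpha n+\beta}$ a positive integer strictly less than $d$. It therefore cannot be divisible by $d$, and the probability is $0$.

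The only point needing care is Step 4, the regime of large $d$. A naive bound there only gives $\ll d^{-1} + N^{-1}$, which is insufficient. This is exactly where the normalisation $\beta\ge 1$, guaranteeing positivity of every term of the sequence, is used. Everything else is routine counting.
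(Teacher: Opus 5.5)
Your proof is correct and follows the paper's argument. The paper likewise observes that $\floor{\alpha n+\beta}$ takes values in $[\alpha N+\beta]$, which contains $O(N/d)$ multiples of $d$, each attained by $O(1)$ values of $n$. Your Step 4 makes explicit the positivity coming from the normalisation $\beta\ge 1$, which the paper uses silently when it counts the multiples of $d$ in $[\alpha N+\beta]$ without an additive $+1$ term.
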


\begin{proof}
    We simply note that $\floor{\alpha n + \beta}$ takes values in the interval $[\alpha N + \beta]$ of integers of length $O(N)$, in which there are at most $O(N/d)$ multiples of $d$, and each value of $\floor{\alpha n + \beta}$ is taken by at most $O(1)$ choices of $n$. 
\end{proof}

\begin{lem} \label{omega vs omega''}
    For some absolute constant $C$, 
    \begin{equation*}
        \mathbb{P}_{n\sim U[N]}\left(\omega''_N(\floor{\alpha n + \beta}) - \omega(\floor{\alpha n + \beta})\geq C\log\log\log N\right)\ll \frac{1}{\log\log N}. 
    \end{equation*}
\end{lem}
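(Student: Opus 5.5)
The plan is to observe that, exactly as worded, the event in the statement is empty, so its probability is $0$ and the claimed bound holds trivially with any absolute constant $C>0$. The only substantive step is a pointwise inequality comparing $\omega''_N$ with $\omega$.

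First, recall the normalisation $\beta\geq 1$ made at the start of the proof of Theorem~\ref{Beatty quant}. Since $\alpha>0$ and $n\geq 1$, this gives $\alpha n+\beta\geq 1$, and hence $m:=\floor{\alpha n+\beta}$ is a positive integer for every $n\in[N]$.

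Next, $\omega''_N(m)=\sum_{p\in\mathcal{P}_N}1_{p\mid m}$ counts exactly those distinct prime divisors of $m$ that lie in $\mathcal{P}_N$. On the other hand, $\omega(m)$ counts all distinct prime divisors of $m$. Since the prime divisors of $m$ lying in $\mathcal{P}_N$ form a subset of all its prime divisors, we get
\begin{equation*}
    \omega''_N(m)\leq \omega(m)\qquad\text{for every positive integer } m.
\end{equation*}

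Finally, fix any absolute constant $C>0$. For $N$ large we have $C\log\log\log N>0$, so for every $n\in[N]$,
\begin{equation*}
    \omega''_N(\floor{\alpha n+\beta})-\omega(\floor{\alpha n+\beta})\leq 0< C\log\log\log N.
\end{equation*}
Thus the event in question never occurs, its probability is $0$, and in particular it is $\ll (\log\log N)^{-1}$.

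There is no real obstacle here. The only points to check are that $\floor{\alpha n+\beta}$ is never $0$ or negative, which is exactly what $\beta\geq 1$ guarantees (it avoids any convention issues for $\omega(0)$), and that $\log\log\log N>0$ for $N$ sufficiently large.

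For use in the proof of Theorem~\ref{Beatty quant}, one presumably also wants the same bound for the reverse difference $\omega-\omega''_N$. That version would follow from Markov's inequality, but it is a different statement and is not needed for the claim as worded. The argument would write $\omega(m)-\omega''_N(m)=\sum_{p\in\mathcal{P}_N^c}1_{p\mid m}$. Lemma~\ref{quant esti lem triv} then gives a bound on the expectation of $O(\sum_{p\in\mathcal{P}_N^c}p^{-1})$, and Lemma~\ref{inverse sum estimates} bounds this by $O(\log\log\log N)$. To reach probability $(\log\log N)^{-1}$ one would additionally use a second-moment bound, again via Lemma~\ref{quant esti lem triv} applied to $d=pq$.
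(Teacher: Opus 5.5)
Your pointwise argument is valid for the statement exactly as printed. With $\beta\geq 1$ the integer $\floor{\alpha n+\beta}$ is positive, and $\omega''_N\leq\omega$, so the event is empty. But the sign is a typo, and the vacuous version is not what the lemma is for. The paper's proof identifies the difference with $\sum_{p\in\mathcal{P}_N^c}1_{p\mid\floor{\alpha n+\beta}}$, which is $\omega-\omega''_N$. In the proof of Theorem~\ref{Beatty quant}, the lemma is used only in the lower bound $\mathbb{P}(\omega\leq y)\geq\mathbb{P}(\omega''_N\leq y-C\log\log\log N)-\mathbb{P}(\cdots)$. That inequality requires the subtracted event to be $\{\omega-\omega''_N\geq C\log\log\log N\}$; the upper bound there already comes from $\omega\geq\omega''_N$. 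So the real content is a tail bound for $X:=\omega(\floor{\alpha n+\beta})-\omega''_N(\floor{\alpha n+\beta})=\sum_{p\in\mathcal{P}_N^c}1_{p\mid\floor{\alpha n+\beta}}$.

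Your sketch for that version has a genuine gap: a second-moment bound cannot reach $(\log\log N)^{-1}$. Put $\mu:=\sum_{p\in\mathcal{P}_N^c}p^{-1}$. Since $R=N^{1/\log\log N}$, already $\sum_{R<p\leq\alpha N+\beta}p^{-1}=\log\log\log N+O(1)$, so $\mu\asymp\log\log\log N$. The threshold $C\log\log\log N$ is therefore only a constant multiple of $\mu$, which is the natural size of $\mathbb{E}X$.

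Lemma~\ref{quant esti lem triv} with $d=pq$ gives $\mathbb{E}X^2\ll\mu+\mu^2$. Chebyshev then yields $\mathbb{P}(X\geq C\log\log\log N)\ll C^{-2}$, which does not decay with $N$. Any fixed moment behaves the same way. Even a sharp variance bound $\operatorname{Var}X\ll\mu$ would only give $O(1/\log\log\log N)$. Because $(\log\log N)^{-1}=e^{-\log\log\log N}$ is exponentially small in $\mu$, you need moments of order growing with $N$.

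This is what the paper does:
\begin{enumerate}
\item Lemma~\ref{quant esti lem triv} with $d=p_1\cdots p_\ell$ (its constant is independent of $d$) gives $\mathbb{E}\binom{X}{\ell}\ll\mu^\ell/\ell!$ uniformly in $\ell$.
\item On $\{X\geq 2\ell\}$ one has $\binom{X}{\ell}\geq\binom{2\ell}{\ell}$ and $\ell!\binom{2\ell}{\ell}\geq\ell^\ell$, so Markov gives $\mathbb{P}(X\geq 2\ell)\ll(\mu/\ell)^\ell$.
\item Choosing $\ell=\max(2C',10)\log\log\log N$, with $\mu\leq C'\log\log\log N$, makes this $\ll 2^{-\ell}\leq(\log\log N)^{-1}$.
\end{enumerate}
Equivalently, bound $\mathbb{E}\,2^X=\sum_{\ell}\mathbb{E}\binom{X}{\ell}\ll e^{\mu}$ and apply Markov to $2^X$.
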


\begin{proof}
    Recall that $\mathcal{P}_N^c = \{p|p\notin \mathcal{P}_N, p\leq \alpha N + \beta\}$ denotes the set of primes which contribute to $\omega$ but not to $\omega''_N$. We have
    \begin{equation*}
        \omega''_N(\floor{\alpha n + \beta}) - \omega(\floor{\alpha n + \beta}) = \sum_{p\in \mathcal{P}_N^c} 1_{p\mid \floor{\alpha n + \beta}},
    \end{equation*}
    and for any non-negative integer $\ell$, 
    \begin{equation*}
        \binom{\omega''_N(\floor{\alpha n + \beta}) - \omega(\floor{\alpha n + \beta})}{j} = \sum_{p_1 < \dots < p_\ell\in \mathcal{P}_N^c} 1_{p_1\cdots p_\ell\mid \floor{\alpha n + \beta}},
    \end{equation*}
    By Lemma~\ref{quant esti lem triv}, we have
    \begin{align*}
        \mathbb{E}_{n\sim U[N]}\binom{\omega''_N(\floor{\alpha n + \beta}) - \omega(\floor{\alpha n + \beta})}{\ell} &= \sum_{p_1 < \dots < p_\ell\in \mathcal{P}_N^c} \mathbb{P}_{n\sim U[N]}\left(p_1\cdots p_\ell\mid \floor{\alpha n + \beta}\right)\\
        &\ll \sum_{p_1 < \dots < p_\ell\in \mathcal{P}_N^c} \left(p_1\cdots p_\ell\right)^{-1}\\
        &\leq \frac{1}{\ell!}\left(\sum_{p\in \mathcal{P}_N^c}p^{-1}\right)^\ell. 
    \end{align*}
    By Lemma~\ref{inverse sum estimates}, 
    \begin{equation*}
        \sum_{p\in\mathcal{P}_N^c}p^{-1}\leq C' \log \log \log N, 
    \end{equation*}
    for some absolute constant $C'$. Hence, we have
    \begin{equation*}
        \mathbb{E}_{n\sim U[N]}\binom{\omega''_N(\floor{\alpha n + \beta}) - \omega(\floor{\alpha n + \beta})}{\ell}\ll \frac{(C'\log\log\log N)^\ell}{\ell!}. 
    \end{equation*}
    By Markov's inequality, this implies
    \begin{equation*}
        \mathbb{P}_{n\sim U[N]}\left(\omega''_N(\floor{\alpha n + \beta}) - \omega(\floor{\alpha n + \beta})\geq 2\ell\right)\ll \frac{(C'\log\log\log N)^\ell}{\ell^\ell}. 
    \end{equation*}
    Taking $\ell = \max(2C', 10)\log\log\log N$ yields the desired result with $C = 2\ell$. 
\end{proof}

In essence, Lemma~\ref{omega vs omega''} reduces Theorem~\ref{Beatty quant} to the analogous statement with the truncated object $\omega''_N$ in place of $\omega$. We now show that the characteristic function of $\omega''_N$ is well approximated by that of $\omega'''_N$. 

\begin{lem} \label{quant esti lem}
    For any positive integer $d$ coprime to $a$, we have
    \begin{equation*}
        \left|\mathbb{P}_{n\sim U[N]}(d\mid \floor{\alpha n + \beta}) - d^{-1}\right| \ll N^{-1/200}. 
    \end{equation*}
\end{lem}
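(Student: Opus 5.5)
The first step is a reduction to small $d$. If $d > N^{1/200}$, Lemma~\ref{quant esti lem triv} gives $\mathbb{P}_{n\sim U[N]}(d\mid\floor{\alpha n+\beta})\ll d^{-1}\le N^{-1/200}$, and $d^{-1}\le N^{-1/200}$ trivially, so the claim holds. Hence assume $d\le N^{1/200}$.

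The plan is to compare $\alpha$ with the rational approximation $\gamma=a/b$. By Dirichlet's theorem and the minimality of $\gamma$ in $\mathcal{Q}(N^{1/8})$, write $\alpha=a/b+\delta$ with $b\ll N^{1/8}$ and $|\delta|\ll b^{-1}N^{-1/8}$. Since $\gcd(a,b)=1$ and $\gcd(a,d)=1$, we have $\gcd(a,bd)=1$. This is the only place the coprimality hypothesis enters.

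\textbf{Exact count for the rational model.} Consider the rational model $\floor{an/b+c}$ for a constant $c$. The residue of $an+\floor{bc}$ modulo $bd$ runs over all residues as $n$ runs over any $bd$ consecutive integers, because $a$ is invertible mod $bd$. Now $\floor{an/b+c}=\floor{(an+\floor{bc})/b}$, and $bd\mid$-residues $r$ with $\floor{r/b}\equiv 0 \pmod d$ number exactly $b$ out of $bd$. So exactly a $1/d$ fraction of any $bd$ consecutive $n$ satisfy $d\mid\floor{an/b+c}$.

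\textbf{Block decomposition.} Partition $[N]$ into blocks of length $K$, with $K$ a multiple of $bd$ and $K\approx N^{1/2}$ (note $bd\le N^{1/8+1/200}$). On a block starting at $n_0$, write
\begin{equation*}
\alpha n+\beta = an/b+c_{n_0}+\delta(n-n_0),\qquad c_{n_0}=\delta n_0+\beta .
\end{equation*}
The drift term satisfies $|\delta(n-n_0)|\le|\delta|K$. For $n$ in the block, $\floor{\alpha n+\beta}$ agrees with $\floor{an/b+c_{n_0}}$ unless the fractional part of $an/b+c_{n_0}$ lies within $|\delta|K$ of $0$ (from the appropriate side). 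By the exact count, each block then contributes $K/d$ plus at most the number of such exceptional $n$. Incomplete blocks cost $O(K/N)$ in total, which is negligible.

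\textbf{Main obstacle: counting exceptional $n$.} The values of $an/b+c_{n_0}$ mod $1$ lie on the coset $b^{-1}\mathbb{Z}+c_{n_0}$, with spacing $1/b$. I expect to split into two cases, mirroring the cases of Lemma~\ref{major arcs}.
\begin{itemize}
\item If $|\delta|K\le 1/(2b)$, at most one residue class of $n$ mod $b$ can be exceptional in a given block. Moreover, it is exceptional only when the coset point nearest $0$ lies within $|\delta|K$ of $0$. As $n_0$ advances by $K$, $c_{n_0}$ moves by $\delta K$. So over all $N/K$ blocks, the number of blocks where this happens is $\ll 1+ N|\delta|\cdot b\cdot(|\delta|K)\cdot(\delta K)^{-1}\ll 1+N|\delta|$, up to factors of $b$ to be tracked. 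Each such block has at most $K/b+1$ exceptional $n$. Balancing $K$ against $|\delta|\ll b^{-1}N^{-1/8}$ should give a total exceptional proportion $\ll N^{-c}$. If $\delta=0$ there are no exceptions at all, since the value set is a fixed coset that floors exactly.
\item If $b$ is large, the points are $1/b$-dense. Then the number of exceptional $n$ per block is $\ll K(|\delta|K+1/b)$, which I expect to be $\ll KN^{-1/100}$ after choosing $K$ suitably, for instance $K\approx N^{1/16}$ rather than $N^{1/2}$.
\end{itemize}
The delicate point is choosing $K$, as a multiple of $bd$, so that both regimes give a saving of at least $N^{-1/200}$ uniformly in $b\le N^{1/8}$ and $d\le N^{1/200}$.

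If the block bookkeeping becomes unwieldy, the fallback is the Erd\H{o}s--Tur\'an inequality applied to $(\alpha n+\beta)/d$ mod $1$ with $H=N^{1/100}$. This bounds the exponential sums by $\|h\alpha/d\|_\mathbb{Z}^{-1}$. The frequencies $h$ not divisible by $bd$ satisfy $\|h\alpha/d\|_\mathbb{Z}\ge 1/(2bd)$, again using $\gcd(a,bd)=1$. The frequencies $h$ divisible by $bd$ reduce to the same near-rational analysis of $\delta$ as above.
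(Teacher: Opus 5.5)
There is a genuine gap, and it sits in the Diophantine input. The bound $|\delta|\ll b^{-1}N^{-1/8}$ is false for the paper's $\gamma=a/b$. That $\gamma$ is the \emph{nearest} element of $\mathcal{Q}(N^{1/8})$ to $\alpha$, and it need not be a Dirichlet approximant. For example, take $H=N^{1/8}$ and $\alpha=1-0.6/H$. Nothing in $\mathcal{Q}(H)$ lies strictly between $1-1/H$ and $1$, so $\gamma=(H-1)/H$, $b=H$ and $\delta=0.4/H$, i.e.\ $|\delta|b\asymp 1$. For a fixed $\alpha$ with unbounded partial quotients the same happens for infinitely many $N$: $\gamma$ is a semiconvergent, $b\asymp N^{1/8}$, and $|\delta|b\asymp 1/q_k$ for a convergent denominator $q_k$, which for Liouville-type $\alpha$ can be $N^{o(1)}$. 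In this regime every block whose length $K$ is a multiple of $bd$ has drift $|\delta|K\ge|\delta|bd\gg d/q_k$. So your first case never applies, and your second case gives no saving. Shorter blocks do not help either, because the points $an/b$ then cluster near a coset of $q_k^{-1}\mathbb{Z}$ rather than being $1/b$-dense. You also cannot switch to a Dirichlet approximant $p/q$, because the hypothesis controls $\gcd(d,a)$, not $\gcd(d,p)$. For instance, take $\alpha=2-0.6/b$ with $2b-1=N^{1/8}$. Then $\gamma=(2b-1)/b$, so $d=2$ is admissible, but $p/q=2/1$ and $\floor{2n+c}$ has constant parity, so your exact count fails on every block.

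The Erd\H{o}s--Tur\'an fallback has the same defect: the bound $\|h\alpha/d\|_\mathbb{Z}\ge 1/(2bd)$ for $bd\nmid h$ requires $N^{1/100}|\delta|\le 1/(2b)$. It is also the wrong tool. It controls the discrepancy of $(\alpha n+\beta)/d$ modulo $1$, which is $\asymp 1/(bd)$ whenever $\alpha$ is extremely close to $a/b$ with $bd$ small. The lemma survives there only because the Fourier coefficients of $1_{[0,1/d)}$ vanish at the nonzero multiples of $d$, and hence at every resonant frequency $h\in bd\mathbb{Z}$. Generic majorants and minorants destroy this vanishing. This is why the paper smooths with $\chi_\pm$, whose transforms vanish on $\mathbb{Z}\setminus\{0\}$, and pays $O(\epsilon^{1/4})=O(N^{-1/200})$ for the bad intervals. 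After that, the paper uses only the \emph{separation} of $\gamma$ from the rest of $\mathcal{Q}(N^{1/8})$, never the size of $|\alpha-\gamma|$. Frequencies $m\in d^{-1}\mathbb{Z}\setminus\mathbb{Z}$ with $|m|<J$ and $\gamma m\notin\mathbb{Z}$ satisfy $\|\alpha m\|_\mathbb{Z}\gg N^{-1/4}/d$. On the other hand, $\gamma m\in\mathbb{Z}$ together with $\gcd(a,d)=1$ forces $m\in\mathbb{Z}$, where $\phi_\pm$ vanishes.

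If you want to keep the block argument, the missing case can be handled with the same separation idea. Take a Dirichlet approximant $p/q$. If $p/q=\gamma$, your Diophantine bound holds and your two cases go through. If $p/q\neq\gamma$, separation gives $|\alpha-p/q|\gg N^{-1/4}$. Then the shift $c_{n_0}$ drifts through $\gg N^{3/4}/d$ periods of length $d$. The per-block proportion is a $d$-periodic function of $c$ with mean exactly $1/d$, whatever $\gcd(p,d)$ is, so averaging it over blocks gives the bound.
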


\begin{proof}
    We make use of the approximation technique developed in Subsection~\ref{indicator approximation 2}. Note that the analysis in Subsection~\ref{indicator approximation 2} did not depend on the primality of $p$, and remains valid if we take $\epsilon = N^{-1/50}$. Applying analogues of Lemma~\ref{E and E''} and Corollary~\ref{E'' cor} with $d$ in place of $p$, it suffices to show that
    \begin{equation} \label{quant lem sum}
        \sum_{m\in\frac{1}{d}\mathbb{Z}\backslash\mathbb{Z}}\frac{\min(1, m^{-1})\min(1, \epsilon^{-1}m^{-1})}{d\max(N^{1/2}\|\alpha m\|_\mathbb{Z}, 1)}\ll N^{-1/200}
    \end{equation}
    Let $J = N^{1/25} = \epsilon^{-2}$, the contribution of $|m|\geq J$ is bounded above by $O(\epsilon^{-1}J^{-1}) = O(N^{-1/25})$. 
    
    If $|m| < J$, let $m'$ denote a closest integer to $\alpha m$. We have $|m|, |m'|\leq N^{1/8}$. Hence $m'/m\in \mathcal{Q}(N^{1/8})$. If $\gamma m\notin\mathbb{Z}$, then $m'/m\neq \gamma$, and
    \begin{equation*}
        \|\alpha m\|_\mathbb{Z} = |\alpha m - m'|\geq |m'/m - \alpha|\gg N^{-1/4}. 
    \end{equation*}
    Hence the contribution from $|m| < J$ with $\gamma m\notin\mathbb{Z}$ to the sum (\ref{quant lem sum}) is bounded above by 
    \begin{equation*}
        O(\log(\epsilon^{-1})N^{-1/4}) = O(\log(N)N^{-1/4}). 
    \end{equation*}
    It remains to show that
    \begin{equation*}
        \sum_{m\in\frac{1}{d}\mathbb{Z}\cap\gamma^{-1}\mathbb{Z}\backslash\mathbb{Z}}\frac{\min(1, m^{-1})\min(1, \epsilon^{-1}m^{-1})}{d}\ll N^{-1/200}. 
    \end{equation*}
    This follows once we note that we are summing over an empty set. 
\end{proof}

We note that
\begin{align*}
    \binom{\omega''(\floor{\alpha n + \beta})}{j} &= \sum_{p_1 < \dots < p_\ell\in \mathcal{P}_N} 1_{p_1\cdots p_\ell\mid \floor{\alpha n + \beta}}, \\
    \binom{\omega'''}{j} &= \sum_{p_1 < \dots < p_\ell\in \mathcal{P}_N} X_{p_1}\cdots X_{p_\ell}, 
\end{align*}
and hence
\begin{align*}
    \mathbb{E}_{n\sim U[N]}\binom{\omega''(\floor{\alpha n + \beta})}{j} &= \sum_{p_1 < \dots < p_\ell\in \mathcal{P}_N} \mathbb{P}_{n\sim U[N]}\left(p_1\cdots p_\ell\mid \floor{\alpha n + \beta}\right), \\
    \mathbb{E}\binom{\omega'''}{j} &= \sum_{p_1 < \dots < p_\ell\in \mathcal{P}_N} (p_1\cdots p_\ell)^{-1}. 
\end{align*}
As such, we have the following corollaries of Lemmas~\ref{quant esti lem triv} and~\ref{quant esti lem}, respectively. 

\begin{cor} \label{binom approx tail}
    For any non-negative integer $\ell$, 
    \begin{equation*}
        \left|\mathbb{E}_{n\sim U[N]}\binom{\omega_N''(\floor{\alpha n + \beta})}{\ell} - \mathbb{E}\binom{\omega_N'''}{\ell}\right|\ll\frac{(\log\log N)^\ell}{\ell!}. 
    \end{equation*}
\end{cor}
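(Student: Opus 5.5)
We bound the two binomial moments separately by the triangle inequality, using the expansions written just before the statement. For the Kubilius model,
\begin{equation*}
    \mathbb{E}\binom{\omega_N'''}{\ell} = \sum_{p_1 < \dots < p_\ell\in \mathcal{P}_N} (p_1\cdots p_\ell)^{-1}\leq \frac{1}{\ell!}\left(\sum_{p\in\mathcal{P}_N}p^{-1}\right)^\ell = \frac{s^\ell}{\ell!}.
\end{equation*}
Indeed, each unordered set of $\ell$ distinct primes appears $\ell!$ times in the expansion of $s^\ell$, and all remaining terms of that expansion are nonnegative.

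For the arithmetic side, the primes $p_1 < \dots < p_\ell$ are distinct, so $d = p_1\cdots p_\ell$ is squarefree and $p_1\cdots p_\ell\mid m$ if and only if $d\mid m$. Lemma~\ref{quant esti lem triv} then gives
\begin{equation*}
    \mathbb{P}_{n\sim U[N]}\left(p_1\cdots p_\ell\mid \floor{\alpha n + \beta}\right)\ll (p_1\cdots p_\ell)^{-1},
\end{equation*}
with an implied constant depending only on $\alpha, \beta$ and, crucially, not on $d$ or $\ell$. Summing over the tuples and using the same comparison with $s^\ell$ as above yields
\begin{equation*}
    \mathbb{E}_{n\sim U[N]}\binom{\omega_N''(\floor{\alpha n + \beta})}{\ell}\ll \frac{s^\ell}{\ell!}.
\end{equation*}

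Finally, Lemma~\ref{inverse sum estimates} gives $s = \log\log N + O(\log\log\log N)$; more simply, Mertens' estimate gives $s\leq \sum_{p\leq R}p^{-1}\leq \log\log N$ for large $N$, since $\log\log R = \log\log N - \log\log\log N$. Hence both binomial moments are $O((\log\log N)^\ell/\ell!)$, and so is their difference.

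The only point needing care is uniformity in $\ell$. We must check that the constant in Lemma~\ref{quant esti lem triv} does not deteriorate as $d$ grows. This holds because the count of multiples of $d$ in $[\alpha N + \beta]$ is at most $(\alpha N + \beta)/d$, with no additive error. That bound is only useful for $d\leq \alpha N + \beta$; for larger $d$ the probability is $0$, because $\floor{\alpha n + \beta}\geq 1$ under the assumption $\beta\geq 1$. So the bound holds uniformly in $d$ and the argument goes through.
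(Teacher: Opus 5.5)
Your proposal is correct and follows essentially the paper's proof. Each binomial moment is bounded by $\sum_{p_1<\dots<p_\ell}(p_1\cdots p_\ell)^{-1}\le s^\ell/\ell!$, using Lemma~\ref{quant esti lem triv} on the arithmetic side, and then Mertens' estimate finishes. Your added checks on uniformity in $\ell$ — that the constant in Lemma~\ref{quant esti lem triv} does not depend on $d$, and that $s\le\log\log N$ rather than merely $s\ll\log\log N$ — spell out points the paper leaves implicit, and they matter because the bound is summed over all $\ell$ in Lemma~\ref{omega'' and omega'''}.
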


\begin{proof}
    By Lemma~\ref{quant esti lem triv} and Mertens' estimate, we have
    \begin{align*}
        \left|\mathbb{E}_{n\sim U[N]}\binom{\omega_N''(\floor{\alpha n + \beta})}{\ell} - \mathbb{E}\binom{\omega_N'''}{\ell}\right|&\ll\sum_{p_1 < \dots < p_\ell\in \mathcal{P}_N} (p_1\cdots p_\ell)^{-1}\\
        &\leq \frac{\left(\sum_{p\in \mathcal{P}_N} p^{-1}\right)^\ell}{\ell!}\\
        &\ll\frac{(\log\log N)^\ell}{\ell!}. \qedhere
    \end{align*}
\end{proof}

\begin{cor} \label{binom approx}
    For any non-negative integer $\ell$, 
    \begin{equation*}
        \left|\mathbb{E}_{n\sim U[N]}\binom{\omega_N''(\floor{\alpha n + \beta})}{\ell} - \mathbb{E}\binom{\omega_N'''}{\ell}\right|\ll N^{-1/200}\frac{N^{\ell/\log\log N}}{\ell!}. 
    \end{equation*}
\end{cor}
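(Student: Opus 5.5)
Starting from the binomial-moment identities displayed just above, the difference equals
\begin{equation*}
    \sum_{p_1 < \dots < p_\ell\in \mathcal{P}_N}\left(\mathbb{P}_{n\sim U[N]}\left(p_1\cdots p_\ell\mid \floor{\alpha n + \beta}\right) - (p_1\cdots p_\ell)^{-1}\right),
\end{equation*}
since for distinct primes $p_1, \dots, p_\ell$ the events $p_r\mid \floor{\alpha n + \beta}$ for all $r$ coincide with $p_1\cdots p_\ell\mid\floor{\alpha n + \beta}$, and independence of the $X_p$ gives $\mathbb{E}(X_{p_1}\cdots X_{p_\ell}) = (p_1\cdots p_\ell)^{-1}$. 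The plan is to apply Lemma~\ref{quant esti lem} to each summand with $d = p_1\cdots p_\ell$. This requires $d$ to be coprime to $a$, which holds because every prime dividing $a$ lies in $\mathcal{B}_N$, and $\mathcal{P}_N$ excludes $\mathcal{B}_N$. Each summand is therefore $O(N^{-1/200})$ with an absolute implied constant, independent of $d$ and of $\ell$.

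It then remains to count the summands. The number of $\ell$-element subsets of $\mathcal{P}_N$ is at most $|\mathcal{P}_N|^\ell/\ell!\leq R^\ell/\ell!$, since $\mathcal{P}_N\subseteq [R]$ and $\binom{m}{\ell}\leq m^\ell/\ell!$. With $R = N^{1/\log\log N}$ this gives the total bound $N^{-1/200}N^{\ell/\log\log N}/\ell!$, as claimed.

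The only point needing care is the uniformity of Lemma~\ref{quant esti lem} in $d$. That lemma is stated for any positive integer $d$ coprime to $a$, with no upper restriction on $d$, even though $d$ may be as large as $R^\ell$, far exceeding $N$ for large $\ell$. Its proof still goes through in that regime: the approximation of Subsection~\ref{indicator approximation 2} does not use the size of $d$, and the tail bounds for $\sum_{m\in d^{-1}\mathbb{Z}} d^{-1}|\phi_\pm(m)|$ remain uniform. Hence the summand bound is genuinely uniform and the corollary follows directly, with no real obstacle. It is meant to be useful only for small $\ell$; Corollary~\ref{binom approx tail} handles large $\ell$.
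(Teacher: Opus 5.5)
Your argument is the paper's own: write the difference as a sum over $\ell$-element subsets of $\mathcal{P}_N$, bound each summand by $O(N^{-1/200})$ via Lemma~\ref{quant esti lem} with $d=p_1\cdots p_\ell$ (which is coprime to $a$ since $\mathcal{P}_N$ excludes $\mathcal{B}_N$), and count the subsets by $\binom{R}{\ell}\le R^\ell/\ell!$. One caveat on your uniformity remark: the Fourier proof of Lemma~\ref{quant esti lem} does not carry over verbatim to huge $d$, because for $m\in d^{-1}\mathbb{Z}$ with $|m|<J$ the rational $m'/m$ has height up to about $Jd$ and so need not lie in $\mathcal{Q}(N^{1/8})$; the uniformity you need still holds, since for $d\ge N^{1/200}$ Lemma~\ref{quant esti lem triv} directly gives $|\mathbb{P}_{n\sim U[N]}(d\mid\floor{\alpha n+\beta})-d^{-1}|\ll d^{-1}\le N^{-1/200}$.
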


\begin{proof}
    By Lemma~\ref{quant esti lem}, we have
    \begin{align*}
        \left|\mathbb{E}_{n\sim U[N]}\binom{\omega_N''(\floor{\alpha n + \beta})}{\ell} - \mathbb{E}\binom{\omega_N'''}{\ell}\right|&\ll \sum_{p_1 < \dots < p_\ell\in \mathcal{P}_N}N^{-1/200}\\
        &\leq N^{-1/200}\binom{R}{\ell}\\
        &\leq N^{-1/200}\frac{N^{\ell/\log\log N}}{\ell!}. \qedhere
    \end{align*}
\end{proof}

To transform Corollaries~\ref{binom approx tail} and~\ref{binom approx} into a statement on characteristic functions, we make use of the identity
\begin{equation} \label{binom to char}
    e(tx) = \sum_{\ell \geq 0}(e(t) - 1)^\ell \binom{x}{\ell}, 
\end{equation}
for any real number $t$ and any non-negative integer $x$, which follows from the binomial theorem. 

\begin{lem} \label{omega'' and omega'''}
    For any real number $t$ with $|t|\leq 10^{-5}$, we have
    \begin{equation*}
        \left|\mathbb{E}_{n\sim U[N]}(e(t\omega''_N(\floor{\alpha n + \beta}))) - \mathbb{E}(e(t\omega'''_N))\right|\ll (\log N)^{-1/400}|t|.
    \end{equation*}
\end{lem}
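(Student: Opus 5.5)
We expand both characteristic functions with the identity (\ref{binom to char}). Since $\omega''_N(\floor{\alpha n + \beta})$ and $\omega'''_N$ are non-negative integers bounded by $|\mathcal{P}_N|\leq R$, the sums are finite, and
\begin{equation*}
    \mathbb{E}_{n\sim U[N]}(e(t\omega''_N(\floor{\alpha n + \beta}))) - \mathbb{E}(e(t\omega'''_N)) = \sum_{\ell\geq 1}(e(t) - 1)^\ell\left(\mathbb{E}_{n\sim U[N]}\binom{\omega_N''(\floor{\alpha n + \beta})}{\ell} - \mathbb{E}\binom{\omega_N'''}{\ell}\right).
\end{equation*}
The $\ell = 0$ term vanishes. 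We use $|e(t) - 1|\leq 2\pi|t|$, and we split the sum at a cutoff $\ell_0$. For $\ell\leq \ell_0$ we apply Corollary~\ref{binom approx}. For $\ell > \ell_0$ we apply Corollary~\ref{binom approx tail}.

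\emph{Small $\ell$.} For $\ell\leq \ell_0$ the contribution is at most
\begin{equation*}
    N^{-1/200}\sum_{1\leq \ell\leq \ell_0}\frac{(2\pi|t|N^{1/\log\log N})^\ell}{\ell!}.
\end{equation*}
We take $\ell_0 := \log\log N/400$, so that $N^{\ell_0/\log\log N} = N^{1/400}$. Every term then has $N^{\ell/\log\log N}\leq N^{1/400}$, and each term is at most $N^{-1/400}(2\pi|t|)^\ell/\ell!$. Summing over $\ell\geq 1$ gives $\ll N^{-1/400}|t|$, which is far smaller than the target bound.

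\emph{Large $\ell$.} For $\ell > \ell_0$ the contribution is at most
\begin{equation*}
    \sum_{\ell > \ell_0}\frac{(2\pi|t|C\log\log N)^\ell}{\ell!},
\end{equation*}
where $C$ is the implied constant in Mertens' estimate. This is the main obstacle, because the sum only becomes small when $\ell_0$ is much larger than $|t|\log\log N$. Here the hypothesis $|t|\leq 10^{-5}$ is used. Set $x := 2\pi C|t|\log\log N$, so that $x\leq 2\pi C\cdot 10^{-5}\log\log N$. Using $\ell!\geq(\ell/e)^\ell$, each term is at most $(ex/\ell)^\ell$. For $\ell > \ell_0 = \log\log N/400$ we have $ex/\ell\leq 400e\cdot 2\pi C\cdot 10^{-5}$, which is less than $1/e$ provided $C$ is moderate. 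This requires checking the constants, or shrinking $\ell_0$ slightly. The tail is therefore bounded by a geometric series dominated by its first term, giving roughly $(ex/\ell_0)^{\ell_0}\leq e^{-\ell_0} = (\log N)^{-1/400}$.

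We still need to extract a factor of $|t|$. Every term with $\ell > \ell_0\geq 1$ contains at least one factor of $x$, so we write $(ex/\ell)^\ell = x\cdot e(ex/\ell)^{\ell-1}/\ell$. Since $x\ll|t|\log\log N$, and since $(\log N)^{-1/400}$ decays much faster than any power of $\log\log N$, the factor $\log\log N$ can be absorbed by adjusting constants (for example, taking $\ell_0$ slightly larger than $\log\log N/400$, or using the margin in the ratio bound). This yields a tail bound of $\ll(\log N)^{-1/400}|t|$. If the constant bookkeeping fails to deliver exactly the exponent $1/400$, the fallback is to choose $\ell_0 = c\log\log N$ with $c$ slightly below $1/200$. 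The small-$\ell$ range still gives a negative power of $N$, and the tail still gives $(\log N)^{-c'}|t|$ with $c'$ large enough once the ratio $ex/\ell_0$ is made tiny by the hypothesis $|t|\leq 10^{-5}$.

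Combining the two ranges gives the lemma.
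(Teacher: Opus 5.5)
Your proposal is correct and follows the paper's proof essentially line for line: expand via (\ref{binom to char}), split at $\ell_0=\log\log N/400$, apply Corollary~\ref{binom approx} below the cutoff, and apply Corollary~\ref{binom approx tail} with $\ell!\geq(\ell/e)^\ell$ and a geometric series above it. The constants you hedged on work out with no adjustment. The constant in Corollary~\ref{binom approx tail} sits outside the $\ell$-th power (indeed $s\leq\log\log N$), so the ratio is at most $400\cdot 2\pi e\cdot 10^{-5}<0.07<e^{-1}$, and since $ex/\ell\leq ex/\ell_0\ll|t|$ the factor $|t|$ comes out with no stray $\log\log N$ to absorb (also, shrinking $\ell_0$ would enlarge the ratio, not shrink it).
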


\begin{proof}
    By (\ref{binom to char}), we have 
    \begin{equation*}
        \mathbb{E}_{n\sim U[N]}(e(t\omega''_N(\floor{\alpha n + \beta}))) - \mathbb{E}(e(t\omega'''_N)) = \sum_{\ell\geq 0}(e(t) - 1)^\ell\left(\mathbb{E}_{n\sim U[N]}\binom{\omega_N''(\floor{\alpha n + \beta})}{\ell} - \mathbb{E}\binom{\omega_N'''}{\ell}\right). 
    \end{equation*}
    Since the $\ell = 0$ term vanishes, we may in fact assume that $\ell \geq 1$. By Corollaries~\ref{binom approx tail} and~\ref{binom approx}, we have
    \begin{align*}
        &\left|\mathbb{E}_{n\sim U[N]}(e(t\omega''_N(\floor{\alpha n + \beta}))) - \mathbb{E}(e(t\omega'''_N))\right|\\
        &\ll \sum_{\ell\geq 1}|e(t) - 1|^\ell \min\left(N^{-1/200}\frac{N^{\ell/\log\log N}}{\ell!}, \frac{(\log\log N)^\ell}{\ell!}\right)\\
        &\leq \sum_{1\leq \ell\leq \frac{\log\log N}{400}}|e(t) - 1|^\ell N^{-1/200}\frac{N^{\ell/\log\log N}}{\ell!} + \sum_{\ell > \frac{\log\log N}{400}}|e(t) - 1|^\ell \frac{(\log\log N)^\ell}{\ell!}\\
        &\leq \sum_{1\leq \ell\leq \frac{\log\log N}{400}}|e(t) - 1|^\ell \frac{N^{-1/400}}{\ell!} + \sum_{\ell > \frac{\log\log N}{400}}|e(t) - 1|^\ell \frac{(\log\log N)^\ell}{\ell!}\\
        &\ll N^{-1/400}(\exp(|e(t) - 1|) - 1) + \sum_{\ell > \frac{\log\log N}{400}}\left(|e(t) - 1|\frac{e\log\log N}{\ell}\right)^\ell\\
        &\ll N^{-1/400}|t| + \sum_{\ell > \frac{\log\log N}{400}}\left(\frac{2\pi e|t|}{1/400}\right)^\ell\\
        &\ll N^{-1/400}|t| + \left(10^4|t|\right)^{\frac{\log\log N}{400}}\\
        &\ll (\log N)^{-1/400}|t|. \qedhere
    \end{align*}
\end{proof}

As 
\begin{equation*}
    \omega'''_N = \sum_{p\in\mathcal{P}_N}X_p
\end{equation*}
is a sum of independent random variables, the characteristic function $\omega'''_N$ may be given as
\begin{equation*}
    \mathbb{E}(e(t\omega'''_N)) = \prod_{p\in\mathcal{P}_N}\left(1 + \frac{e(t) - 1}{p}\right). 
\end{equation*}
Recall that $s = \sum_{p\in\mathcal{P}_N}p^{-1}$. Taking the logarithm, we have
\begin{equation} \label{log omega''' char}
    \log \mathbb{E}(e(t\omega'''_N)) = \sum_{p\in\mathcal{P}_N}\left(\frac{e(t) - 1}{p} + O\left(\frac{t^2}{p^2}\right)\right) = s(e(t) - 1) + O(t^2). 
\end{equation}

We now compare the characteristic functions of $\frac{\omega'''_N - s}{\sqrt{s}}$ and the standard Gaussian distribution. We directly bound their difference over $[-s^{-1/6}, s^{-1/6}]$ (Lemma~\ref{omega''' char estimate}) and provide tail estimates over $[-\sqrt{s}/2, \sqrt{s}/2]$ (Lemma~\ref{omega''' char tail estimate}). 

\begin{lem} \label{omega''' char estimate}
    For $|t|\leq s^{1/6}$, 
    \begin{equation*}
        \left|\EE{e\left(t\frac{\omega'''_N - s}{\sqrt{s}}\right)} - e^{-2\pi^2t^2}\right|\ll \frac{e^{-2\pi^2t^2}t^2(|t| + 1)}{\sqrt{s}}. 
    \end{equation*}
\end{lem}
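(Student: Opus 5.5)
Note first a typo in the statement: the range should read $|t|\leq s^{1/6}$ (matching the preceding remark's $[-s^{-1/6}, s^{-1/6}]$ up to that typo). We prove it for $|t|\leq s^{1/6}$ and obtain the bound by comparing logarithms of characteristic functions via (\ref{log omega''' char}).

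Set $\tau := t/\sqrt{s}$, so $|\tau|\leq s^{-1/3}\to 0$ and in particular $|\tau|\leq 10^{-5}$ for large $N$. Then
\begin{equation*}
    \EE{e\left(t\frac{\omega'''_N - s}{\sqrt{s}}\right)} = e(-t\sqrt{s})\,\EE{e(\tau\omega'''_N)}.
\end{equation*}
By (\ref{log omega''' char}), its logarithm equals $-2\pi i t\sqrt{s} + s(e(\tau) - 1) + O(\tau^2)$. Taylor expanding,
\begin{equation*}
    e(\tau) - 1 = 2\pi i\tau - 2\pi^2\tau^2 + O(|\tau|^3).
\end{equation*}
Hence
\begin{equation*}
    s(e(\tau) - 1) = 2\pi i t\sqrt{s} - 2\pi^2t^2 + O(|t|^3 s^{-1/2}).
\end{equation*}
The linear terms cancel, and the $O(\tau^2) = O(t^2/s)$ error is at most $O(t^2/\sqrt{s})$. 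Writing $\Delta$ for the total error, we get $\log$ of the characteristic function $= -2\pi^2t^2 + \Delta$ with
\begin{equation*}
    \Delta \ll \frac{t^2(|t| + 1)}{\sqrt{s}}.
\end{equation*}
One must fix the branch of $\log$. Since each factor $1 + (e(\tau) - 1)/p$ is within $O(|\tau|/p)$ of $1$, we use the principal logarithm factorwise, which is what (\ref{log omega''' char}) implicitly does.

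Next, for $|t|\leq s^{1/6}$ we have $|\Delta|\ll s^{1/2}\cdot s^{-1/2} = O(1)$. Then $|e^{\Delta} - 1|\ll|\Delta|$ uniformly, using $|e^z - 1|\leq |z|e^{|z|}$ with $|z|$ bounded. Therefore
\begin{equation*}
    \left|e^{-2\pi^2t^2 + \Delta} - e^{-2\pi^2t^2}\right| = e^{-2\pi^2t^2}|e^{\Delta} - 1| \ll e^{-2\pi^2t^2}\frac{t^2(|t| + 1)}{\sqrt{s}},
\end{equation*}
as claimed.

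The main obstacle is bookkeeping in the error term of (\ref{log omega''' char}). There the $O(t^2/p^2)$ must really be $O(|e(\tau) - 1|^2/p^2) = O(\tau^2/p^2)$, evaluated at the rescaled frequency $\tau$ rather than $t$. Summed over $p$, this gives $O(\tau^2) = O(t^2/s)$, which is harmless. Any cruder treatment would lose the factor $\sqrt{s}$. The rest is the routine Taylor expansion and the observation that the exponent's error stays bounded precisely up to $|t|\asymp s^{1/6}$, which is why that range is chosen.
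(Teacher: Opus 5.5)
Your proof is correct and is essentially the paper's: both take the logarithm of the Kubilius-model characteristic function at the rescaled frequency $t/\sqrt{s}$, Taylor expand $e(t/\sqrt{s})$ to second order so that the linear term cancels the centring, collect an error $O(t^2(|t|+1)/\sqrt{s})$, and exponentiate using that this error is $O(1)$ for $|t|\le s^{1/6}$. One small correction to your opening remark: the statement already reads $|t|\le s^{1/6}$, and the typo $s^{-1/6}$ is instead in the paper's sentence preceding the lemma.
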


\begin{proof}
    We make use of the expansion $e(x) = 1 + 2\pi i x - 2\pi^2x^2 + O(|x|^3)$ for $x\in [-1, 1]$. We apply this expansion to $x = t/\sqrt{s}$. By (\ref{log omega''' char}), we have
    \begin{align*}
        \log \EE{e\left(t\frac{\omega'''_N - s}{\sqrt{s}}\right)} &= \left(e\left(\frac{t}{\sqrt{s}}\right) - 1\right)s + \log e\left(t\frac{- s}{\sqrt{s}}\right) + O(t^2/s)\\
        &= \left(e\left(\frac{t}{\sqrt{s}}\right) - 1 - 2\pi i\frac{t}{\sqrt{s}}\right)s + O(t^2/s)\\
        &= \left(-2\pi^2 t^2/s + O(|t|^3/s^{3/2})\right)s + O(t^2/s)\\
        &= -2\pi^2t^2 + O(t^2(|t| + 1)/\sqrt{s}). 
    \end{align*}
    Note that the error term $O(t^2(|t| + 1)/\sqrt{s})\ll 1$ for $t\leq s^{1/6}$. Therefore, 
    \begin{equation*}
        \EE{e\left(t\frac{\omega'''_N - s}{\sqrt{s}}\right)} = e^{-2\pi^2t^2}(1 + O(t^2(|t| + 1)/\sqrt{s})),
    \end{equation*}
    giving the desired estimate. 
\end{proof}

\begin{lem} \label{omega''' char tail estimate}
    For $|t|\leq \sqrt{s}/2$, 
    \begin{equation*}
        \left|\EE{e\left(t\frac{\omega'''_N - s}{\sqrt{s}}\right)}\right|\leq e^{-4t^2}. 
    \end{equation*}
\end{lem}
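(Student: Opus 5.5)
The plan is to use the product formula for the characteristic function of the Kubilius model directly, bounding it one prime at a time rather than through the logarithmic expansion (\ref{log omega''' char}); that expansion is only valid for small $t$, whereas here $|t|$ may be as large as $\sqrt{s}/2$. By independence of the $X_p$,
\begin{equation*}
    \left|\EE{e\left(t\frac{\omega'''_N - s}{\sqrt{s}}\right)}\right| = \prod_{p\in\mathcal{P}_N}\left|1 + \frac{e(t/\sqrt{s}) - 1}{p}\right|,
\end{equation*}
since the centring factor $e(-t\sqrt{s})$ has modulus $1$. Write $\theta = t/\sqrt{s}$, so that $|\theta|\leq 1/2$.

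For each prime, I would compute the modulus exactly. With $q = 1/p$,
\begin{equation*}
    \left|1 - q + q e(\theta)\right|^2 = (1-q)^2 + q^2 + 2q(1-q)\cos(2\pi\theta) = 1 - 2q(1-q)(1 - \cos 2\pi\theta) = 1 - 4q(1-q)\sin^2(\pi\theta).
\end{equation*}
For $|\theta|\leq 1/2$, concavity of $\sin$ on $[0,\pi/2]$ gives $\sin(\pi|\theta|)\geq 2|\theta|$, so $\sin^2(\pi\theta)\geq 4\theta^2$. Using also $1 - x\leq e^{-x}$, each factor is at most
\begin{equation*}
    \exp\left(-2q(1-q)\sin^2(\pi\theta)\right)\leq \exp\left(-8q(1-q)\theta^2\right).
\end{equation*}

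Taking the product over $p\in\mathcal{P}_N$, the total is at most $\exp\left(-8\theta^2\sum_{p\in\mathcal{P}_N}(p^{-1} - p^{-2})\right)$. Since $2\in\mathcal{B}_N$, every $p\in\mathcal{P}_N$ satisfies $p\geq 3$, so $1 - q\geq 2/3$ and
\begin{equation*}
    \sum_{p\in\mathcal{P}_N}\left(p^{-1} - p^{-2}\right)\geq \frac{2}{3}s.
\end{equation*}
This gives the bound $\exp(-\tfrac{16}{3}\theta^2 s) = \exp(-\tfrac{16}{3}t^2)\leq e^{-4t^2}$.

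The only step needing care is making the constant come out to at least $4$. This relies on two choices above: keeping the sharp inequality $\sin^2(\pi\theta)\geq 4\theta^2$ rather than a cruder lower bound, and excluding $p=2$ (which $\mathcal{B}_N$ does by definition). Were $p=2$ allowed, the factor $1-q$ could be as small as $1/2$, and the argument would yield only $e^{-4t^2}$ with no slack.
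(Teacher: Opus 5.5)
Your proof is correct and is essentially the paper's argument: the same per-prime modulus computation, the bound $1-\cos(2\pi\theta)\geq 8\theta^2$ for $|\theta|\leq 1/2$, and $1-x\leq e^{-x}$, multiplied over $p\in\mathcal{P}_N$. The one difference is that the paper uses only $1-p^{-1}\geq 1/2$, valid for every $p\geq 2$, which already gives exactly $e^{-4t^2}$; so excluding $p=2$ buys you slack but is not needed.
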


\begin{proof}
    We note that whenever $|t|\leq 1/2$ and $p\geq 2$, 
    \begin{equation*}
        \left|1 + \frac{e(t) - 1}{p}\right| = \sqrt{1 + 2(1 - p^{-1})p^{-1}(\cos(2\pi t) - 1)}\leq \sqrt{1 - 8p^{-1}t^2}
        \leq \exp(-4p^{-1}t^2). 
    \end{equation*}
    Therefore, when $|t|\leq \sqrt{s}/2$, 
    \begin{equation*}
        \left|\EE{e\left(t\frac{\omega'''_N - s}{\sqrt{s}}\right)}\right| = \left|\EE{e\left(\frac{t}{\sqrt{s}}\omega'''_N\right)}\right| = \prod_{p\in\mathcal{P}_N}\left|1 + \frac{e(t/\sqrt{s}) - 1}{p}\right|\leq \exp(-4s(t/\sqrt{s})^2)\leq e^{-4t^2}. \qedhere
    \end{equation*}
\end{proof}

We now combine Lemmas~\ref{omega''' char estimate} and~\ref{omega''' char tail estimate} to estimate the integral appearing in Esseen's smoothing lemma. 

\begin{cor} \label{esseen omega''' main term}
    \begin{equation*}
        \int_{-\sqrt{s}/2}^{\sqrt{s}/2}\left|\frac{\mathbb{E}\left(e\left(t\frac{\omega'''_N - s}{\sqrt{s}}\right)\right) - e^{-2\pi^2t^2}}{t}\right|dt \ll \frac{1}{\sqrt{\log\log N}}. 
    \end{equation*}
\end{cor}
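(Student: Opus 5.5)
We split the range of integration at $|t| = s^{1/6}$. On the inner range $|t|\leq s^{1/6}$ we apply Lemma~\ref{omega''' char estimate}. On the outer range $s^{1/6}<|t|\leq \sqrt{s}/2$ we apply Lemma~\ref{omega''' char tail estimate} together with the trivial bound for the Gaussian term. At the end we convert $s$ back to $\log\log N$ using Lemma~\ref{inverse sum estimates}.

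\textbf{Inner range.} Lemma~\ref{omega''' char estimate} bounds the integrand by
\begin{equation*}
    \frac{1}{|t|}\cdot\frac{e^{-2\pi^2t^2}t^2(|t|+1)}{\sqrt{s}} = \frac{e^{-2\pi^2t^2}|t|(|t|+1)}{\sqrt{s}}.
\end{equation*}
This is integrable over all of $\mathbb{R}$ with an absolute bound, since the Gaussian factor kills the polynomial growth. Note that the $t^2$ factor in the lemma cancels the $1/t$ singularity at the origin, so no separate treatment near $t=0$ is required. The contribution of the inner range is therefore $O(s^{-1/2})$.

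\textbf{Outer range.} For $s^{1/6}<|t|\leq\sqrt{s}/2$ we use the triangle inequality. The characteristic function of $(\omega'''_N-s)/\sqrt{s}$ is at most $e^{-4t^2}$ in absolute value by Lemma~\ref{omega''' char tail estimate}, and $e^{-2\pi^2t^2}\leq e^{-4t^2}$. Since $|t|\geq s^{1/6}\geq 1$ for large $N$, the integrand is at most $2e^{-4t^2}/|t|\leq 2e^{-4t^2}$. Integrating over $|t|>s^{1/6}$ gives a contribution of $O(e^{-4s^{1/3}})$, which is much smaller than $s^{-1/2}$.

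\textbf{Conclusion.} Adding the two ranges, the integral is $O(s^{-1/2})$. By Lemma~\ref{inverse sum estimates}, $s=\log\log N+O(\log\log\log N)\asymp\log\log N$, so $s^{-1/2}\ll(\log\log N)^{-1/2}$, which is the claimed bound.

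No step is a genuine obstacle. The only points to check are that the lemmas' ranges cover the whole interval, which holds because $s^{1/6}\leq\sqrt{s}/2$ for large $s$, and that the integrand is well-defined at $t=0$, which is handled by the $t^2$ factor in Lemma~\ref{omega''' char estimate}.
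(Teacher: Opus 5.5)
Your proposal is correct and follows the paper's proof: you split at $|t| = s^{1/6}$, apply Lemma~\ref{omega''' char estimate} on the inner range to get $O(s^{-1/2})$ and Lemma~\ref{omega''' char tail estimate} on the outer range to get $O(e^{-4s^{1/3}})$, then convert using $s \asymp \log\log N$. You also spell out a step the paper leaves implicit, namely bounding the Gaussian term via $e^{-2\pi^2t^2}\le e^{-4t^2}$ and using $1/|t|\le 1$ on the outer range.
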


\begin{proof}
    We split the integral into pieces for $|t|\leq s^{1/6}$ and $|t| > s^{1/6}$, to which we apply Lemmas~\ref{omega''' char estimate} and~\ref{omega''' char tail estimate}, respectively. We have
    \begin{align*}
        &\int_{-\sqrt{s}/2}^{\sqrt{s}/2}\left|\frac{\mathbb{E}\left(e\left(t\frac{\omega'''_N - s}{\sqrt{s}}\right)\right) - e^{-2\pi^2t^2}}{t}\right|dt\\
        &= \int_{|t|\leq s^{1/6}}\left|\frac{\mathbb{E}\left(e\left(t\frac{\omega'''_N - s}{\sqrt{s}}\right)\right) - e^{-2\pi^2t^2}}{t}\right|dt + \int_{s^{1/6} < |t|\leq \sqrt{s}/2}\left|\frac{\mathbb{E}\left(e\left(t\frac{\omega'''_N - s}{\sqrt{s}}\right)\right) - e^{-2\pi^2t^2}}{t}\right|dt\\
        &\ll \int_{|t|\leq s^{1/6}}\frac{e^{-2\pi^2t^2}|t|(|t| + 1)}{\sqrt{s}}dt + \int_{s^{1/6} < |t|\leq \sqrt{s}/2}e^{-4t^2}dt\\
        &\ll \frac{1}{\sqrt{s}} + e^{-4s^{1/3}}\\
        &\ll \frac{1}{\sqrt{\log\log N}}. \qedhere
    \end{align*}
\end{proof}

The corresponding result for $\omega''_N(\floor{\alpha n + \beta})$ in place of $\omega'''_N$ now follows from Lemma~\ref{omega'' and omega'''}. 

\begin{cor} \label{esseen main term}
    \begin{equation*}
        \int_{-10^{-5}\sqrt{s}}^{10^{-5}\sqrt{s}}\left|\frac{\mathbb{E}_{n\sim U[N]}\left(e\left(t\frac{\omega''_N(\floor{\alpha n + \beta}) - s}{\sqrt{s}}\right)\right) - e^{-2\pi^2t^2}}{t}\right|dt \ll \frac{1}{\sqrt{\log\log N}}. 
    \end{equation*}
\end{cor}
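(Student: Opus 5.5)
We compare the integrand directly with the one in Corollary~\ref{esseen omega''' main term} by the triangle inequality. For each $t$ with $|t|\leq 10^{-5}\sqrt{s}$ we write the difference of characteristic functions at $t$ as the difference for $\omega''_N(\floor{\alpha n + \beta})$ versus $\omega'''_N$, plus the difference for $\omega'''_N$ versus the Gaussian.

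The second piece is already handled. Since $10^{-5}\sqrt{s}\leq \sqrt{s}/2$, Corollary~\ref{esseen omega''' main term} bounds its contribution to the integral by $O(1/\sqrt{\log\log N})$.

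For the first piece we use the centring. The shift by $s$ contributes only the common unimodular factor $e(-t\sqrt{s})$, so
\begin{equation*}
    \left|\mathbb{E}_{n\sim U[N]}\left(e\left(t\tfrac{\omega''_N(\floor{\alpha n + \beta}) - s}{\sqrt{s}}\right)\right) - \mathbb{E}\left(e\left(t\tfrac{\omega'''_N - s}{\sqrt{s}}\right)\right)\right| = \left|\mathbb{E}_{n\sim U[N]}\left(e\left(t'\omega''_N(\floor{\alpha n + \beta})\right)\right) - \mathbb{E}\left(e(t'\omega'''_N)\right)\right|,
\end{equation*}
where $t' = t/\sqrt{s}$. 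On the range of integration we have $|t'|\leq 10^{-5}$, so Lemma~\ref{omega'' and omega'''} applies. It bounds this quantity by $O((\log N)^{-1/400}|t|/\sqrt{s})$.

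Dividing by $|t|$ and integrating over an interval of length $O(\sqrt{s})$ gives a total contribution of $O((\log N)^{-1/400})$. This is much smaller than $1/\sqrt{\log\log N}$. Note that the factor $|t|$ in Lemma~\ref{omega'' and omega'''} is exactly what cancels the singularity of $1/t$ at the origin.

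Adding the two contributions gives the claimed bound. No step here is a genuine obstacle; the only checks needed are that the rescaled variable $t'$ falls within the range $|t'|\leq 10^{-5}$ of Lemma~\ref{omega'' and omega'''}, and that the integration range lies inside the range $|t|\leq\sqrt{s}/2$ of Corollary~\ref{esseen omega''' main term}.
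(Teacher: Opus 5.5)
Your proposal is correct and matches the paper's proof essentially step for step. The paper also splits by the triangle inequality, bounds the $\omega'''_N$-versus-Gaussian part by Corollary~\ref{esseen omega''' main term}, and removes the common phase $e(-t\sqrt{s})$. It then applies Lemma~\ref{omega'' and omega'''} at $t/\sqrt{s}$, obtaining an $O((\log N)^{-1/400})$ contribution after dividing by $|t|$ and integrating over the range.
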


\begin{proof}
    By Lemmas~\ref{omega'' and omega'''}, \ref{esseen omega''' main term} and the triangle inequality, we have
    \begin{align*}
        &\int_{-10^{-5}\sqrt{s}}^{10^{-5}\sqrt{s}}\left|\frac{\mathbb{E}_{n\sim U[N]}\left(e\left(t\frac{\omega''_N(\floor{\alpha n + \beta}) - s}{\sqrt{s}}\right)\right) - e^{-2\pi^2t^2}}{t}\right|dt\\
        &\leq \int_{-10^{-5}\sqrt{s}}^{10^{-5}\sqrt{s}}\left|\frac{\mathbb{E}\left(e\left(t\frac{\omega'''_N - s}{\sqrt{s}}\right)\right) - e^{-2\pi^2t^2}}{t}\right|dt\\
        &\qquad+ \int_{-10^{-5}\sqrt{s}}^{10^{-5}\sqrt{s}}\left|\frac{\mathbb{E}_{n\sim U[N]}\left(e\left(t\frac{\omega''_N(\floor{\alpha n + \beta}) - s}{\sqrt{s}}\right)\right) - \mathbb{E}\left(e\left(t\frac{\omega'''_N - s}{\sqrt{s}}\right)\right)}{t}\right|dt\\
        &\ll \frac{1}{\sqrt{\log\log N}} + \int_{-10^{-5}\sqrt{s}}^{10^{-5}\sqrt{s}}\left|\frac{\mathbb{E}_{n\sim U[N]}\left(e\left(\frac{t}{\sqrt{s}}\omega''_N(\floor{\alpha n + \beta})\right)\right) - \mathbb{E}\left(e\left(\frac{t}{\sqrt{s}}\omega'''_N\right)\right)}{t}\right|dt\\
        &\ll \frac{1}{\sqrt{\log\log N}} + \int_{-10^{-5}\sqrt{s}}^{10^{-5}\sqrt{s}}\frac{(\log N)^{-1/400}|t/\sqrt{s}|}{|t|}dt\\
        &\ll \frac{1}{\sqrt{\log\log N}}. \qedhere
    \end{align*}
\end{proof}

\begin{cor} \label{quant s cor}
    \begin{equation*}
        d_K\left(\frac{\omega''_N(\floor{\alpha n + \beta}) - s}{\sqrt{s}}, Z\right)\ll \frac{1}{\sqrt{\log\log N}}, 
    \end{equation*}
    where $Z\sim \mathcal{N}(0, 1)$. 
\end{cor}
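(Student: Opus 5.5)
We apply Esseen's smoothing lemma (Lemma~\ref{esseen smoothing lem}) to $X := \frac{\omega''_N(\floor{\alpha n + \beta}) - s}{\sqrt{s}}$ with $n\sim U[N]$, taking $A := 10^{-5}\sqrt{s}$. By Lemma~\ref{inverse sum estimates} we have $s = \log\log N + O(\log\log\log N)$, so $s\asymp \log\log N$. The term $A^{-1}$ is therefore $O(1/\sqrt{\log\log N})$.

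It remains to bound the integral
\begin{equation*}
    \int_{-A}^{A}\left|\frac{\EE{e(tX)} - e^{-2\pi^2t^2}}{t}\right|dt.
\end{equation*}
This is exactly the quantity controlled by Corollary~\ref{esseen main term}, which gives the bound $O(1/\sqrt{\log\log N})$. Summing the two contributions yields $d_K(X, Z)\ll 1/\sqrt{\log\log N}$.

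The only point needing care is that the integrand in Lemma~\ref{esseen smoothing lem} must match the one in Corollary~\ref{esseen main term}, including the convention $e(x)=e^{2\pi i x}$. With this convention, the characteristic function of $Z$ evaluated at $2\pi t$ is $e^{-2\pi^2 t^2}$, which is consistent with the comparison function used throughout Lemmas~\ref{omega''' char estimate} and~\ref{omega''' char tail estimate}.

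Since $A$ grows with $N$ and all the work is already contained in Corollary~\ref{esseen main term}, there is no real obstacle. The heavy lifting was comparing the characteristic function of $\omega''_N(\floor{\alpha n+\beta})$ with that of the Kubilius model $\omega'''_N$ for $|t/\sqrt{s}|\le 10^{-5}$ (Lemma~\ref{omega'' and omega'''}), and that has already been done.
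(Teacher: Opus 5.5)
Your proposal is correct and is exactly the paper's argument: apply Esseen's smoothing lemma (Lemma~\ref{esseen smoothing lem}) with $A = 10^{-5}\sqrt{s}\asymp\sqrt{\log\log N}$, so that $A^{-1}\ll 1/\sqrt{\log\log N}$ and the integral is bounded by Corollary~\ref{esseen main term}. Your check that the convention $e(x)=e^{2\pi i x}$ makes $e^{-2\pi^2t^2}$ the right comparison term is consistent with how the paper uses the lemma.
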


\begin{proof}
    This follows from Corollary~\ref{esseen main term} by Esseen's smoothing lemma (Lemma~\ref{esseen smoothing lem}) with $A = 10^{-5}\sqrt{s}\asymp \sqrt{\log\log N}$. 
\end{proof}

\begin{cor} \label{omega'' quant}
    \begin{equation*}
        d_K\left(\frac{\omega''_N(\floor{\alpha n + \beta}) - \log\log N}{\sqrt{\log \log N}}, Z\right)\ll \frac{\log\log\log N}{\sqrt{\log\log N}}, 
    \end{equation*}
    where $Z\sim \mathcal{N}(0, 1)$. 
\end{cor}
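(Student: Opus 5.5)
We deduce this from Corollary~\ref{quant s cor} by a change of centring and scaling. The estimate there is for the normalisation by $s$, and we switch to $\log\log N$ at a cost that is controlled by Lemma~\ref{inverse sum estimates}. Write $X := \omega''_N(\floor{\alpha n + \beta})$, $\lambda := \log\log N$, and set $\Delta := \lambda - s$. By Lemma~\ref{inverse sum estimates} we have $\Delta = O(\log\log\log N)$, and in particular $s/\lambda = 1 + O(\log\log\log N/\lambda)$.

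For any $x\in\mathbb{R}$ we have
\begin{equation*}
    \PP{\frac{X - \lambda}{\sqrt{\lambda}}\leq x} = \PP{\frac{X - s}{\sqrt{s}}\leq y}, \qquad y := \frac{x\sqrt{\lambda} + \Delta}{\sqrt{s}}.
\end{equation*}
Corollary~\ref{quant s cor} then gives
\begin{equation*}
    \PP{\frac{X - \lambda}{\sqrt{\lambda}}\leq x} = \Phi(y) + O(\lambda^{-1/2}),
\end{equation*}
where $\Phi$ is the standard Gaussian distribution function. It therefore suffices to prove that
\begin{equation*}
    \sup_x|\Phi(y) - \Phi(x)|\ll \frac{\log\log\log N}{\sqrt{\lambda}}.
\end{equation*}

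Next we split $y - x$ into a shift and a rescaling:
\begin{equation*}
    y - x = \frac{\Delta}{\sqrt{s}} + x\left(\sqrt{\lambda/s} - 1\right).
\end{equation*}
The shift contributes $|\Phi(x + \Delta/\sqrt{s}) - \Phi(x)|\leq \|\Phi'\|_\infty|\Delta|/\sqrt{s}\ll \log\log\log N/\sqrt{\lambda}$, using $s\asymp\lambda$.

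The rescaling is the only point needing care, because $x$ is unbounded and a pure Lipschitz bound would blow up. Instead we use the standard fact that for $c = 1 + \eta$ with $|\eta|\leq 1/2$,
\begin{equation*}
    \sup_z|\Phi(cz) - \Phi(z)|\ll|\eta|.
\end{equation*}
This holds because by the mean value theorem the difference is at most $|\eta z|\sup_{w\text{ between } z, cz}\Phi'(w)\ll|\eta|\,|z|e^{-z^2/8}\ll|\eta|$. Here $\eta = \sqrt{\lambda/s} - 1 = O(\log\log\log N/\lambda)$, which is even smaller than needed.

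Combining the shift and rescaling bounds via the triangle inequality gives the claimed estimate, uniformly in $x$. The argument is elementary, and the only step requiring attention is the uniformity of the scaling comparison in $x$.
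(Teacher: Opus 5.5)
Your proof is correct and takes essentially the same route as the paper. Both apply Corollary~\ref{quant s cor} after the affine change from the $(s,\sqrt{s})$ normalisation to the $(\log\log N, \sqrt{\log\log N})$ one, then compare $\mathcal{N}\bigl((s-\log\log N)/\sqrt{\log\log N},\, s/\log\log N\bigr)$ with $\mathcal{N}(0,1)$ using $s = \log\log N + O(\log\log\log N)$. The paper simply asserts that this Gaussian comparison is $O(\log\log\log N/\sqrt{\log\log N})$; your shift-plus-rescaling argument, with the uniform bound $\sup_z|\Phi(cz)-\Phi(z)|\ll|c-1|$, supplies that detail correctly.
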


\begin{proof}
    By Corollary~\ref{quant s cor}, 
    \begin{equation*}
        d_K\left(\frac{\omega''_N(\floor{\alpha n + \beta}) - \log\log N}{\sqrt{\log \log N}}, Z'\right)\ll \frac{1}{\sqrt{\log\log N}}, 
    \end{equation*}
    where $Z'\sim \mathcal{N}\left(\frac{s - \log\log N}{\sqrt{\log\log N}}, \frac{s}{\log\log N}\right)$. However, since $s = \log\log N + O(\log \log \log N)$ by Lemma~\ref{inverse sum estimates}, we have
    \begin{equation*}
        d_K(Z, Z')\ll \frac{\log\log\log N}{\sqrt{\log\log N}}. 
    \end{equation*}
    The desired result follows from the triangle inequality for the Kolmogorov distance. 
\end{proof}

We are now ready to combine Lemma~\ref{omega vs omega''} and Corollary~\ref{omega'' quant} to show Theorem~\ref{Beatty quant}. 

\begin{proof}[Proof of Theorem~\ref{Beatty quant}]
    Let $Z\sim \mathcal{N}(0, 1)$. For any $x\in \mathbb{R}$, since $\omega(\floor{\alpha n + \beta})\geq\omega''_N(\floor{\alpha n + \beta})$, we have, by Corollary~\ref{omega'' quant}, 
    \begin{align*}
        &\mathbb{P}_{n\sim U[N]}\left(\frac{\omega(\floor{\alpha n + \beta}) - \log\log N}{\sqrt{\log\log N}}\leq x\right)\\
        &\leq \mathbb{P}_{n\sim U[N]}\left(\frac{\omega''_N(\floor{\alpha n + \beta}) - \log\log N}{\sqrt{\log\log N}}\leq x\right)\\
        & = \mathbb{P}(Z\leq x) + O\left(\frac{\log\log\log N}{\sqrt{\log\log N}}\right). 
    \end{align*}
    On the other hand, by Lemma~\ref{omega vs omega''} and Corollary~\ref{omega'' quant}, we have
    \begin{align*}
        &\mathbb{P}_{n\sim U[N]}\left(\frac{\omega(\floor{\alpha n + \beta}) - \log\log N}{\sqrt{\log\log N}}\leq x\right)\\
        &\geq \mathbb{P}_{n\sim U[N]}\left(\frac{\omega''_N(\floor{\alpha n + \beta}) - \log\log N}{\sqrt{\log\log N}}\leq x - \frac{C\log\log\log N}{\sqrt{\log\log N}}\right)\\
        &\qquad - \mathbb{P}_{n\sim U[N]}\left(\omega''_N(\floor{\alpha n + \beta}) - \omega(\floor{\alpha n + \beta})\geq C\log\log\log N\right)\\
        & = \mathbb{P}\left(Z\leq x - \frac{C\log\log\log N}{\sqrt{\log\log N}}\right) + O\left(\frac{\log\log\log N}{\sqrt{\log\log N}}\right)\\
        & = \mathbb{P}\left(Z\leq x\right) + O\left(\frac{\log\log\log N}{\sqrt{\log\log N}}\right). 
    \end{align*}
    Therefore, for any $x\in \mathbb{R}$, 
    \begin{equation*}
        \left|\mathbb{P}_{n\sim U[N]}\left(\frac{\omega(\floor{\alpha n + \beta}) - \log\log N}{\sqrt{\log\log N}}\leq x\right) - \PP{Z\leq x}\right|\ll \frac{\log\log\log N}{\sqrt{\log\log N}}. 
    \end{equation*}
    Hence
    \begin{equation*}
        d_K\left(\frac{\omega(\floor{\alpha n + \beta}) - \log\log N}{\sqrt{\log\log N}}, Z\right)\ll \frac{\log\log\log N}{\sqrt{\log\log N}}. \qedhere
    \end{equation*}
\end{proof}

\subsection{Proof of Theorem~\ref{gen poly quant}}

We now turn to Theorem~\ref{gen poly quant} on the impossibility of universal quantitative bounds on the rate of convergence in the case of higher-degree generalised polynomials. For clarity, we restate Theorem~\ref{gen poly quant} here. 

\textbf{Theorem~\ref{gen poly quant}} \textit{For any sequence $\eta_N\searrow 0$ as $N\rightarrow\infty$, there exists a polynomial $f$ with at least one irrational non-constant coefficient, such that the following holds. For $n\sim U[N]$, the Kolmogorov distance between the random variable 
\begin{equation*}
    \frac{\omega(\floor{f(n)}) - \log\log N}{\sqrt{\log\log N}}
\end{equation*}
and the standard Gaussian $\mathcal{N}(0, 1)$ is not $O(\eta_N)$ as $N\rightarrow\infty$. }

As discussed earlier, we shall leverage the potential failure of qualitative convergence when the coefficients of $f$ are rational to establish the lack of quantitative bounds when its coefficients are sufficiently Liouville-like. 

\begin{proof}
    We shall in fact exhibit such a polynomial $f$ of any given degree $d\geq 2$. 
    
    We recursively construct an increasing sequence $10^{10}\leq a_1 < a_2 <\cdots$ of positive integers such that, letting $b_m = a_1\cdots a_m$ and $N_m = \floor{a_m^{1/d}/2}$, we have
    \begin{enumerate}
        \item $\log N_{m + 1}\geq (10b_m)^{10}$, 
        \item $\eta_{N_{m + 1}}^{-1}\geq m b_m$, 
        \item $\mathbb{P}_{n\sim U[N_{m + 1}/b_m]}(\omega(n)\leq 0.9\log\log N_{m + 1})\leq 0.1$, 
        \item $\mathbb{P}_{n\sim U[N_{m + 1}/b_m]}(\omega(b_mn - 1)\leq 0.9\log\log N_{m + 1})\leq 0.1$. 
    \end{enumerate}
    Note that all conditions are satisfied for sufficiently large $N_{m + 1}$ (\emph{i.e.} sufficiently large $a_{m + 1}$) with respect to $a_1, \dots, a_m$. For the third and fourth items, this follows either from suitable Erd\H{o}s--Kac laws or from the weaker Hardy-Ramanujan theorem~\cite{Hardy-Ramanujan}. Therefore, a valid choice of $a_{m + 1}$ may always be made. 

    Let $\alpha := \sum_i a_i^{-1}$, with rational approximants $\alpha_m := \sum_{i\leq m} a_i^{-1}$. Note that $\alpha_m\in b_m^{-1}\mathbb{Z}$ and $|\alpha - \alpha_m|\ll a_{m + 1}^{-1}\leq N_{m + 1}^{-1}\lll b_m^{-1}$. Therefore, $\alpha$ must be irrational. 
    
    Let $g(n) = (n - 1)n^{d - 1}$, and take $f(n) = \alpha g(n)$ to be a polynomial of degree $d\geq 2$ with positive irrational leading coefficient $\alpha$. 

    Since $\alpha_m \in b_m^{-1}\mathbb{Z}$, for any positive integer $n$, $\alpha_m g(b_mn)$ is an integer. For any positive integer $n \leq N_{m + 1}/b_m$, we have
    \begin{align*}
        f(b_mn) - \alpha_mg(b_mn) = (\alpha - \alpha_m)g(b_mn)\in [0, 2a_{m + 1}^{-1}N_{m + 1}^d)\subseteq [0, 1). 
    \end{align*}
    Therefore, 
    \begin{equation*}
        \floor{f(b_m n)} = \alpha_m g(b_m n) = \alpha_m (b_mn - 1)(b_mn)^{d - 1}. 
    \end{equation*}
    In particular, $\floor{f(b_m n)}$ is a multiple of $n(b_m n - 1)$. By the third and fourth items above, for $n\sim U[N_{m + 1}/b_m]$, we have
    \begin{equation*}
        \omega(\floor{f(b_m n)})\geq \omega(n) + \omega(b_mn - 1)\geq 1.8\log\log N_{m + 1}
    \end{equation*}
    with probability at least $0.8$. Therefore, 
    \begin{align*}
        &\mathbb{P}_{n\sim U[N_{m + 1}]}\left(\frac{\omega(\floor{f(n)} - \log\log N_{m + 1}}{\sqrt{\log\log N_{m + 1}}}\geq 0.8\sqrt{\log\log N_{m + 1}}\right) \\
        &\geq \frac{\floor{N_{m + 1}/b_m}}{N_{m + 1}}\cdot\mathbb{P}_{n\sim U[N_{m + 1}/b_m]}\left(\frac{\omega(\floor{f(b_mn)} - \log\log N_{m + 1}}{\sqrt{\log\log N_{m + 1}}}\geq 0.8\sqrt{\log\log N_{m + 1}}\right)\\
        &\geq (1 - o(1))b_m^{-1}\cdot 0.8. 
    \end{align*}
    On the other hand, for $Z\sim\mathcal{N}(0, 1)$, 
    \begin{equation*}
        \mathbb{P}(Z\geq 0.8\sqrt{\log\log N_{m + 1}})\leq (\log N_{m + 1})^{-0.32}\leq 0.1b_m^{-1}, 
    \end{equation*}
    since $\log N_{m + 1}\geq (10b_m)^{10}$. Hence, for $n\sim U[N_{m + 1}]$, 
    \begin{equation*}
        d_K\left(\frac{\omega(\floor{f(n)} - \log\log N_{m + 1}}{\sqrt{\log\log N_{m + 1}}}, Z\right)\geq (0.7 - o(1))b_m^{-1}\geq (0.7 - o(1)) m\eta_{N_{m + 1}}\ggg \eta_{N_{m + 1}}, 
    \end{equation*}
    since $\eta_{N_{m + 1}}^{-1}\geq mb_m$. 
\end{proof}

\section{Concluding Remarks}

We end by discussing some conjectures on the quantitative convergence of Erd\H{o}--Kac laws. A fundamental lower bound on the Kolmogorov distance, on the order of $\frac{1}{\sqrt{\log\log N}}$, arises for a wide class of Erd\H{o}s--Kac laws since the random variable
\begin{equation*}
    \frac{\omega(\cdot) - \log\log N}{\sqrt{\log\log N}}
\end{equation*}
is supported on $\frac{1}{\sqrt{\log\log N}}\mathbb{Z}$. Using complex-analytic methods, R\'enyi and Tur\'an~\cite{E-K-rate-of-conv} established the matching upper bound (given as Theorem~\ref{E-K quant} here) in the case of the Erd\H{o}s--Kac theorem itself. Prior to their work, Kubilius~\cite{Kubilius} established a weaker $O\left(\frac{\log\log\log N}{\sqrt{\log\log N}}\right)$ upper bound via probabilistic methods. At present, tight bounds on the rate of convergence of Erd\H{o}s--Kac laws still require the use of complex-analytic methods. 

The complex-analytic methods of R\'enyi and Tur\'an do not readily adapt to the case of the Erd\H{o}s--Kac law for Beatty sequences $\floor{\alpha n + \beta}$. The upper bound $O\left(\frac{\log\log\log N}{\sqrt{\log\log N}}\right)$ established by Theorem~\ref{Beatty quant} in this case reflects the use of probabilistic methods. It is natural to conjecture that the optimal $O\left(\frac{1}{\sqrt{\log\log N}}\right)$ upper bound on the rate of convergence may hold. 

\begin{conj}
    Let $\alpha > 0$, $\beta\in\mathbb{R}$. For $n\sim U[N]$, the Kolmogorov distance between the random variable 
    \begin{equation*}
        \frac{\omega(\floor{\alpha n + \beta}) - \log\log N}{\sqrt{\log\log N}}
    \end{equation*}
    and the standard Gaussian $\mathcal{N}(0, 1)$ is bounded above by $O_{\alpha, \beta}\left(\frac{1}{\sqrt{\log\log N}}\right)$ as $N\rightarrow\infty$, where the implied constant may depend on $\alpha$ and $\beta$. 
\end{conj}

In the case of Erd\H{o}s--Kac laws for higher-degree generalised polynomials, Theorem~\ref{gen poly quant} established the lack of quantitative convergence bounds in general. However, the method of Theorem~\ref{gen poly quant} may only produce examples of generalised polynomials $\floor{f(n)}$ with arbitrarily slow Erd\H{o}s--Kac convergence in which the polynomial $f$ is reducible over $\mathbb{R}$. As such, we conjecture the existence of quantitative bounds for the rate of convergence when $f$ is irreducible over $\mathbb{R}$. Since a non-constant polynomial irreducible over $\mathbb{R}$ must be linear or quadratic, and the linear Beatty case has been treated by Theorem~\ref{Beatty quant}, it suffices to consider irreducible quadratic polynomials. 

\begin{conj}
    There exists a sequence $\eta_N\searrow 0$ as $N\rightarrow\infty$, such that the following holds for any real numbers $\alpha > 0, \beta, \gamma$, where $\alpha$ and $\beta$ are not both rational and $\beta^2 < 4\alpha\gamma$. For $n\sim U[N]$, the Kolmogorov distance between the random variable 
    \begin{equation*}
        \frac{\omega(\floor{\alpha n^2 + \beta n + \gamma}) - \log\log N}{\sqrt{\log\log N}}
    \end{equation*}
    and the standard Gaussian $\mathcal{N}(0, 1)$ is $O_{\alpha, \beta, \gamma}(\eta_N)$ as $N\rightarrow\infty$, where the implied constant may depend on $\alpha, \beta$ and $\gamma$. 
\end{conj}

\noindent \textbf{Acknowledgements.} The author is grateful to Joni Ter\"av\"ainen for guidance and helpful suggestions. 

\bibliographystyle{abbrv}
\bibliography{mybib}

\end{document}